\documentclass[reqno]{amsart}

\usepackage{accents}
\usepackage[export]{adjustbox}
\usepackage{amsmath,amssymb,amsthm}
\usepackage[english]{babel}
\usepackage{chngcntr}
\usepackage{comment}
\usepackage{graphicx}
\usepackage{hyperref}
\usepackage{indentfirst}
\usepackage[utf8]{inputenc}
\usepackage{mathtools}
\usepackage{mathrsfs}
\usepackage{stmaryrd}
\usepackage{subcaption}
\usepackage{svg}
\usepackage{tikz-cd}
\usepackage{url}
\usepackage{xcolor}
\usepackage{todonotes}

\usepackage{fullpage}

\usepackage[url=false, maxbibnames=99]{biblatex}
\addbibresource{Bibliography.bib}

\newtheorem{theorem}{Theorem}[section]
\newtheorem*{theorem*}{Theorem}

\newtheorem{conjecture}[theorem]{Conjecture}
\newtheorem{corollary}[theorem]{Corollary}
\newtheorem{example}[theorem]{Example}
\newtheorem*{example*}{Example}
\newtheorem{lemma}[theorem]{Lemma}
\newtheorem{proposition}[theorem]{Proposition}

\theoremstyle{definition}
\newtheorem{definition}[theorem]{Definition}
\newtheorem{remark}[theorem]{Remark}

\numberwithin{equation}{section}
\counterwithin{figure}{section}

\DeclareMathOperator{\add}{add}
\DeclareMathOperator{\affine}{aff}
\DeclareMathOperator{\cA}{\mathcal{A}}
\DeclareMathOperator{\can}{can}
\DeclareMathOperator{\cB}{\mathcal{B}}
\DeclareMathOperator{\Center}{center}
\DeclareMathOperator{\ch}{ch}
\DeclareMathOperator{\C}{\mathbb{C}}
\DeclareMathOperator{\fd}{\mathfrak{d}}
\DeclareMathOperator{\fb}{\mathfrak{b}}
\DeclareMathOperator{\Except}{except}
\DeclareMathOperator{\In}{in}

\DeclareMathOperator{\Lift}{Lift}

\DeclareMathOperator{\Log}{Log}
\DeclareMathOperator{\Mono}{Mono}
\DeclareMathOperator{\Out}{out}
\DeclareMathOperator{\Path}{\varrho}
\DeclareMathOperator{\Pic}{Pic}
\DeclareMathOperator{\sign}{sign}
\DeclareMathOperator{\sing}{sing}
\DeclareMathOperator{\Spec}{Spec}
\DeclareMathOperator{\supp}{Supp}
\DeclareMathOperator{\R}{\mathbb{R}}
\DeclareMathOperator{\topo}{top}
\DeclareMathOperator{\toric}{toric}
\DeclareMathOperator{\vol}{vol}
\DeclareMathOperator{\D}{\mathfrak{D}}

\newcommand{\bP}{\mathbb{P}}
\newcommand{\bR}{\mathbb{R}}
\newcommand{\cPolytope}[1]{\cP_{#1}}
\newcommand{\Dual}{\ast}
\newcommand{\nDual}[1]{m_{#1}}
\newcommand{\Polytope}[1]{P_{#1}}
\newcommand{\trop}{{\mathrm{trop}}}
\renewcommand{\top}{{\mathrm{top}}}

\newcommand{\Z}{\mathbb{Z}}

\newcommand{\Proj}{\operatorname{Proj}}

\newcommand{\NE}{\operatorname{NE}}

\newcommand{\cX}{\mathcal{X}}
\newcommand{\cP}{\mathcal{P}}

\newcommand{\cD}{\mathcal{D}}
\newcommand{\cO}{\mathcal{O}}

\definecolor{Pink}{RGB}{255,0,255}
\definecolor{Blue}{RGB}{0,0,255}
\definecolor{Green}{RGB}{0,213,0}

\delimitershortfall=-1pt

\title{Mirror symmetric Gamma conjecture for del Pezzo surfaces}
\author{Bohan Fang}
\address{Bohan Fang, Beijing International Center for Mathematical
  Research, Peking University, 5 Yiheyuan Road, Beijing 100871, China}
\email{bohanfang@gmail.com}

\author{Junxiao Wang}
\address{Junxiao Wang, Beijing International Center for Mathematical Research, Peking University, 5 Yiheyuan Road, Beijing 100871, China}
\email{wangjunxiao@bicmr.pku.edu.cn}

\author{Yan Zhou}
\address{Yan Zhou, Department of Mathematics, Northeastern University, 43 Leon Street
Boston, MA 02115, U.S.A.}
\email{y.zhou@northeastern.edu}

\begin{document}

\begin{abstract}
    For a del Pezzo surface of degree $\geq 3$, we compute the oscillatory integral for its mirror Landau-Ginzburg model in the sense of \cite{G-H-K_I}. We explicitly construct the mirror cycle of a line bundle and show that the leading order of the integral on this cycle involves the twisted Chern character and the Gamma class. This proves a version of the Gamma conjecture for non-toric Fano surfaces with an arbitrary K-group insertion.
\end{abstract}

\maketitle

\section{Introduction} 
Throughout this paper, we assume that we work over the field $\mathbb{C}$.

\subsection{Mirror symmetry and oscillatory integrals.} Mirror symmetry relates the Gromov-Witten theory to the complex geometry of the mirror. The original mirror symmetry statement predicts genus $0$ Gromov-Witten invariants for the quintic $3$-fold from the period integral of the Calabi-Yau form \cite{C-O-G-P_1991}. When $Y$ is weakly Fano of dimension $n$, i.e. $c_1(Y)\geq 0$, the mirror is a Landau-Ginzburg model $(\mathcal X, W)$ where $\mathcal X$ is a (possibly non-compact) Calabi-Yau manifold and $W: \mathcal X\to \mathbb C$ is a holomorphic function called the \emph{superpotential}. In general, one expects the following oscillatory integral to compute genus $0$ descendant Gromov-Witten invariants
$$
\int_\Xi e^{-\frac{W}{z}} \Omega.
$$
Here $\Omega$ is the Calabi-Yau form on $\mathcal X$.

For any vector bundle $E\in K(Y)$, we define the Gamma-modified Chern character by
$$
\Psi(E)=\hat\Gamma_Y \cup (2\pi \sqrt{-1})^{\deg/2} \ch(E).
$$
Here the Gamma class for $Y$ is 
\begin{align*}
      \hat \Gamma_Y &= \prod \Gamma(1+\delta_i) \\
      &=\exp\Bigl(-\gamma c_1(Y)+\sum_{k=2}^{\infty}\zeta(k)(k-1)!\ch_k(TY)\Bigr),
\end{align*}
where $\delta_i$ are the Chern roots of the tangent bundle $T_Y$. We fix a K\"ahler class $\omega$ for $Y$, and define the quantum cohomology central charge
\begin{align*}
Z_A(E;z)=&\int_Y z^{-\mu}z^{c_1(Y)} t^{-\omega} \Psi(E) \\
+&\sum_{d\in \mathrm{Eff}(Y), k\geq 0} \frac{(-1)^k}{z^{2+k}} \left (\int_{\bar{\mathcal M}^{\mathrm{vir}}_{0,1}(Y,d)} \mathrm{ev}^*(z^{-\mu}z^{c_1(Y)} t^{-\omega} \Psi(E)) \psi^k \right ) t^{\langle \omega, d\rangle}.
\end{align*}
We should understand $t^{-\omega}$ in the integrand as a power series of $\log t$ given by $\exp(-\omega\log t)$. Here $\mu$ is the grading operator $\mu\vert_{H^{k}(Y)}=(\frac{k-n}{2})\mathrm{id}$.

On the other side, the B-model central charge is given by the oscillatory integrals
\[
Z_B(\Xi;z)=\int_\Xi e^{-\frac{W}{z}} \Omega,
\]
where $\Xi$ is a homology $n$-cycle in $H_n(\mathcal X, W^{-1}(+\infty); \mathbb Z) :=\varprojlim_{M} H_n(\mathcal X, \mathrm{Re} W > M;\mathbb Z).$ The superpotential $W$ carries a complex parameter $t'$.

Gamma classes appear in the computation of period integrals \cite{H-K-T-Y_1995,Libgober_1999}. The following conjecture equates these two central charges.
\begin{conjecture}
    There is a map
    \[
        \alpha: K(Y)\longrightarrow H_n(\mathcal X, W^{-1}(+\infty); \mathbb Z).
    \]
    Under the mirror map $\log t = \log t' + O(t')$ and when $z>0$, 
    \[ 
        Z_A(E;z) = Z_B(\alpha(E);z).
    \]
    When $Y$ is Fano the mirror map is trivial $t=t'$.
    \label{conj:ZA=ZB}
\end{conjecture}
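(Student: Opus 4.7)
The plan is to attack the conjecture in the del Pezzo case, where the GHK construction gives $\mathcal{X}$ a canonical affine structure with singularities and a superpotential $W$ built as a sum of theta functions on the scattering diagram of an anticanonical pair $(Y, D)$. First, I would construct the map $\alpha$ on generators. For a line bundle $L=\cO_Y(D')$ on $Y$, write $D'=\sum a_i D_i + \sum b_j E_j$ where $D_i$ are boundary components and $E_j$ are exceptional curves of the degree $\geq 3$ del Pezzo realized as a blow-up of $\bP^2$ or a toric surface. On the integral affine base $B$ of the SYZ-type fibration $\mathcal X \to B$, I would build a real $n$-dimensional tropical cycle $\sigma_L \subset B$ whose support is determined by $c_1(L)$, extending the standard Lagrangian section in the toric case, and lift $\sigma_L$ to a piecewise-smooth $n$-cycle $\Xi_L \in H_n(\cX, W^{-1}(+\infty); \Z)$ by restricting the fibration; extending $\Z$-linearly defines $\alpha$.

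Next, I would establish convergence and compute the leading $z\to 0^+$ asymptotics of $Z_B(\Xi_L; z)$. The cycle $\Xi_L$ is chosen so $\mathrm{Re}\, W \to +\infty$ along its ends, giving absolute convergence. To extract the leading term, I would use stationary phase along the torus fibers (they contribute via a classical residue), reducing to an integral on $\sigma_L$ whose integrand is a product of logarithms coming from the Log map. A careful computation, in the spirit of Iritani's and Hosono's central charge formulas, yields that the leading $t$-series is precisely $\int_Y z^{-\mu} z^{c_1(Y)} t^{-\omega} \Psi(L)$. Here the $\Gamma(1+\delta_i)$ factors arise from the expansion of $\prod(1-e^{-x_i})^{-1}$-type integrals over tropical vertices, matching the exponential form of $\hat\Gamma_Y$ via the standard identity $\log \Gamma(1+x)=-\gamma x+\sum_{k\geq 2}\zeta(k)(-x)^k/k$.

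For the full quantum equality, I would argue that both $Z_A$ and $Z_B$ are flat sections of the same meromorphic connection. On the A-side, $Z_A(E;z)$ is a flat section of the Dubrovin (quantum) connection by construction. On the B-side, the oscillatory integrals $Z_B(\Xi;z)$ satisfy the Gauss--Manin/GKZ system associated to $(\cX, W)$, which for GHK mirrors of del Pezzo surfaces is known (or can be shown by the scattering-diagram/broken-line calculus) to coincide with the quantum $D$-module of $Y$ under the mirror map $\log t = \log t' + O(t')$. Flat sections are then determined by their leading asymptotics at $t'=0$, already matched in the previous step, so $Z_A(E;z)=Z_B(\alpha(E);z)$.

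The main obstacle is the construction of $\Xi_L$ in the non-toric regime and its compatibility with wall-crossing: for blow-ups the scattering diagram has infinitely many rays, and the Lagrangian must be deformed across each wall by the attached automorphism so that $[\Xi_L]$ is globally well-defined and the integral is independent of these choices. Controlling these corrections uniformly and showing they feed exactly into the descendant Gromov--Witten correction terms $\int_{\bar{\cM}^{\mathrm{vir}}_{0,1}(Y,d)}\mathrm{ev}^*(\cdots)\psi^k$ is the technical heart of the argument; I would expect to use the consistency of the GHK scattering diagram, the enumerative meaning of its theta functions, and a version of tropical/holomorphic correspondence to identify each wall contribution with a curve class $d\in \mathrm{Eff}(Y)$.
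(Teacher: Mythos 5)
You are attempting to prove Conjecture \ref{conj:ZA=ZB}, but the paper does not prove this statement: it is left as a conjecture, and the paper's actual theorem (Theorem \ref{thm:main}) establishes only the leading-order consequence, Conjecture \ref{conj:main}, obtained by setting $z=1$ and discarding all the descendant Gromov--Witten correction terms, for del Pezzo surfaces of degree $\geq 3$. Your first two steps are in the same spirit as what the paper does for that weaker statement: the cycle $\alpha(\cO(L))$ is built from tropical data over the scattering diagram, and the $\gamma$ and $\zeta(2)$ contributions do come out of elementary integrals ($\int e^{-t^{\delta}}d\delta$ and $\int\log(1+u)\,u^{-1}du$ over corners of the tropical polytope), matching $\hat\Gamma_Y$ through the expansion of $\log\Gamma(1+x)$. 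Be aware, though, that for exceptional divisors the mirror cycle cannot be obtained by ``restricting the fibration'' over a tropical cycle in the base: the piece $\Gamma_{\sing,ij}$ necessarily leaves the torus chart and passes through the critical point of the local model $uv=H_{ij}(w)$, and this is where the $\tfrac12(2\pi\sqrt{-1})^2$ contributions to $\ch_2$ come from. Your description glosses over this, and it is not a detail one can recover from the toric picture.

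The genuine gap is in your third step. Asserting that the oscillatory integrals for the Gross--Hacking--Keel mirror of a non-toric del Pezzo surface satisfy a Gauss--Manin/GKZ system that ``is known (or can be shown by the scattering-diagram/broken-line calculus) to coincide with the quantum $D$-module of $Y$'' assumes precisely the hard content of the conjecture. For compact toric targets this identification is Givental's/Iritani's mirror theorem; for GHK mirrors of non-toric log Calabi--Yau surfaces no such isomorphism of $D$-modules is established in the literature, and the superpotential here is not the one inherited from an ambient toric variety (the paper explicitly notes it does not know whether the two agree), so one cannot import Iritani's complete-intersection result. Without that input, matching leading asymptotics at $t'=0$ does not propagate to equality of flat sections, and the wall-crossing/descendant-invariant matching you defer to ``the technical heart'' is exactly the unproven step. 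As written, the argument is circular at this point rather than incomplete in a repairable way; a correct execution would either have to prove the $D$-module isomorphism for these intrinsic mirrors (a substantial new theorem) or, as the paper does, retreat to the $z=1$ leading-order statement where no such isomorphism is needed.
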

This conjecture is proposed by Hosono \cite{Hosono_2006}, where the B-model is given by hypergeometric series to which the oscillatory integral should evaluate (see also \cite{Horja_2000, vE-vS_2006, B-H_2006,A-vS-Z_2008,Iritani_2011,Golyshev_2009, aleshkin2023higgscoulomb}). 

It is worth noting that the Gamma class appears in the definition of quantum cohomology central charge, while the B-model central charge does not contain an explicit Gamma class. Indeed the isomorphism $\alpha$ is about the integral structures: it identifies the Gamma-modified K-theoretic classes in the quantum cohomology with the Lagrangian cycles \cite{K-K-P_2008,Iritani_2009}. See \cite{aleshkin2023higgscoulomb} for recent developments in the setting of GLSMs.  

If we just consider the leading term of Conjecture \ref{conj:ZA=ZB}, the Gamma structure is already involved, and there are no Gromov-Witten invariants. The following statement is the leading order behavior of the previous conjecture by setting $z=1$.
\begin{conjecture}
    There exists an $\epsilon>0$, such that as $t\to 0^+$
    \[
        \int_{\alpha(E)} e^{-W} \Omega= \int_Y t^{-\omega} \cdot \Psi(E) + O(t^\epsilon).
    \]
    \label{conj:main}
\end{conjecture}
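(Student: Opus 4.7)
My plan is to construct the mirror cycle $\alpha(L)$ for a line bundle $L\in \Pic(Y)$ directly in the Gross--Hacking--Keel (GHK) mirror, and then extract the leading asymptotic of $\int_{\alpha(L)}e^{-W}\Omega$ by tropicalising the theta-function superpotential.

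First I would pick a generic cluster chart $(\C^\ast)^2\hookrightarrow \mathcal X$ with logarithmic coordinates $u=(u_1,u_2)$ and $\Omega=du_1\wedge du_2$. In this chart $W$ is a theta series $W=\sum_v c_v\, t^{\langle \omega,v\rangle} e^{\langle v,u\rangle}$, the sum being over broken lines ending at the base point; the Fano hypothesis forces $t=t'$. The GHK construction identifies $\Pic(Y)$ with a lattice of tropical directions and sends $L$ to some $\ell(L)$. I define $\alpha(L)$ as the real slice $\{\mathrm{Im}\,u=0\}$ translated by $-\log t\cdot \ell(L)$, closed up at $W^{-1}(+\infty)$ by attaching positive-$\mathrm{Re}\,W$ thimble chains along each boundary ray. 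In the toric cases ($\deg Y=9,8,7,6$) this recovers Iritani's Lagrangian cycle for $L$; in the non-toric cases it is the natural cluster-coordinate extension.

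Next I would carry out the asymptotic analysis. Rescale $u=-\log t\cdot s$ with $s$ in the Newton polytope $\cP$ of $W$ (dual to $\omega$). As $t\to 0^+$, in the interior of $\cP$ all monomials of $W$ are subdominant, so the leading contribution is $(\log t)^2\int_{\cP} e^{\ell(L)\cdot s}\,ds$, which matches $\int_Y t^{-\omega}\cup (2\pi\sqrt{-1})^{\deg/2}\ch(L)$. Transverse to each facet of $\cP$ the integrand localises to a one-dimensional Laplace integral of the form $\int_{\R} e^{-e^w+(1+\delta_i)w}\,dw=\Gamma(1+\delta_i)$, giving one $\Gamma$-factor per Chern root of $T_Y$. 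Taking the product and expanding $\log\Gamma(1+z)$ in $\gamma$ and $\zeta$-values reconstructs $\hat\Gamma_Y$, and hence $\int_Y t^{-\omega}\cdot \Psi(L)$ at leading order.

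The main obstacle is the non-toric cases $\deg Y=5,4,3$, where $W$ is a genuine infinite theta series and broken lines outside the chosen cluster chart contribute both to $W$ and to the boundary of $\alpha(L)$. I would control these by showing that every such broken line carries a strictly positive $t$-weight (using positivity of the GHK scattering diagram), so its contribution is absorbed into the $O(t^\epsilon)$ error, with $\epsilon$ equal to the minimal positive $t$-weight of a non-facet broken line. A secondary technical point is globalising $\alpha(L)$ across cluster mutations so that it remains a closed relative cycle; I would resolve this by checking that theta-function transition maps preserve the real positive locus up to controlled thimble corrections, matching the cluster structure on the mirror side.
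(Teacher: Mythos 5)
There is a genuine gap, and it sits exactly where the main difficulty of the problem lies: the construction of the cycle $\alpha(L)$ for $L\neq\cO$. You define $\alpha(L)$ as the real slice $\{\mathrm{Im}\,u=0\}$ translated by the \emph{real} vector $-\log t\cdot\ell(L)$. Such a translate is homologous (rel $W^{-1}(+\infty)$) to the real slice itself, so its central charge cannot acquire the factor $(2\pi\sqrt{-1})^{\deg/2}\ch(L)$; indeed your claimed leading term $(\log t)^2\int_{\cP}e^{\ell(L)\cdot s}\,ds$ is a real number, whereas $\int_Y t^{-\omega}(2\pi\sqrt{-1})^{\deg/2}\ch(L)$ contains the genuinely complex terms $-(2\pi\sqrt{-1})(\omega\cdot L)\log t$ and $\tfrac{1}{2}(2\pi\sqrt{-1})^2L^2$, so the two sides cannot match. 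To produce these terms the cycle must wrap the \emph{angular} torus: in the paper $\Gamma_{\cO(L)}$ is $\Gamma_{\cO}$ plus conormal-type pieces $\Gamma_i=\kappa_i\,\pi(\Path_i\times\nDual{i})$ (an FLTZ-style characteristic cycle, each contributing $(2\pi\sqrt{-1})\kappa_i(-\lambda_i\log t-\gamma)$) and a piece $\Gamma_{\Center}=\pi(\{0\}\times\sigma_L)$ lying entirely in the fiber torus (contributing $(2\pi\sqrt{-1})^2\vol(\sigma_L)=\tfrac12(2\pi\sqrt{-1})^2L_{\toric}^2$). Worse, for the non-toric generators $E_{ij}$ of $\Pic(Y)$ no cycle supported in a single cluster chart suffices: the paper's $\Gamma_{\sing,ij,k}$ must leave $U_{\sigma_0}$ and pass through the node $(u,v,w)=(0,0,-t^{-c_{ij}})$ of the local model $uv=H_{ij}(w)$ over the wall $\fd_{ij}$, and the half-winding of the path $p_k$ around the punctures is what produces the term $(2\pi\sqrt{-1})(1-2k)\pi\sqrt{-1}$ accounting for $-\tfrac12(2\pi\sqrt{-1})^2b_{ij}^2=\tfrac12(2\pi\sqrt{-1})^2E_{ij}^2$. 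Your ``thimble chains along boundary rays'' and ``globalising across mutations'' gesture at this but do not supply the construction, and the real-translate starting point cannot be repaired into it.

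Two secondary points. First, you treat $W$ in the non-toric cases as ``a genuine infinite theta series'' and propose to bound the tail by the minimal positive $t$-weight of infinitely many broken lines; the paper instead \emph{proves} (Lemma \ref{lem:expan-sup}, via a case analysis of the perturbed scattering diagram for the cubic surface) that in the special chamber $\sigma_0$ every contributing broken line bends only at initial walls, so $W_t$ is a finite Laurent polynomial with positive $t$-powers — this finiteness is what makes the choice of a uniform $\epsilon'$ possible and should not be assumed. Second, your facet-localisation ``one $\Gamma(1+\delta_i)$ per Chern root'' heuristic presumes $\cP$ is a moment polytope of $Y$, which fails for non-toric del Pezzo surfaces: the polytope $\Xi$ has $n+\sum_i l_i$ facets and the $\hat\Gamma_Y$ class only emerges after combining the corner contributions $-(n+\sum_i l_i)\zeta(2)$ with $\tfrac12(\gamma^2+\zeta(2))c_1(Y)^2$ via the identity $c_2(Y)=n+\sum_i l_i$, together with the AGIS-type comparison $\vol_{\affine}(\partial\cPolytope{\delta})=\vol_{\affine}(\partial\Polytope{\delta})+O(t^\epsilon)$ on the complement of the polytope, which your ``interior monomials are subdominant'' step skips.
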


Iritani proved Conjecture \ref{conj:ZA=ZB} for compact weakly Fano toric orbifolds \cite{Iritani_2009} and for toric complete semi-Fano intersections with ambient classes $E$ \cite{Iritani_2011}. Abouzaid-Ganatra-Iritani-Sheridan proved Conjecture \ref{conj:main} for ambient classes $E$ in the setting of Batyrev mirror pairs of Calabi-Yau hypersurfaces using tropical geometry and SYZ duality. 

We denote $Z_B(\Xi)=Z_B(\Xi;1)$ and $Z_\top(E)=\int_Y t^{-\omega}\cdot \Psi(E)$.  Notice this leading order statement is valid under a trivial change of variables $t=t'$ even if the actual mirror map in Conjecture \ref{conj:ZA=ZB} is nontrivial, since the higher order correction from the mirror map is in $O(t^\epsilon)$.

\begin{remark}
    The map $\alpha$ is expected to be the K-theoretic version of the homological mirror symmetry functor. The mirror cycles are indeed Lagrangian submanifolds as mirror objects to coherent sheaves on $Y$. This map is shown to be consistent with known homological mirror symmetry functor for weakly Fano toric orbifolds \cite{Fang_2020} and Batyrev toric Calabi-Yau hypersurface pairs \cite{A-G-I-S_2020}.
\end{remark}

\begin{remark}
    Gamma conjectures are several related statements involving Gamma classes in the setting of quantum cohomology and/or mirror symmetry. For Fano varieties, one can express Gamma conjectures purely as asymptotic behavior of the quantum connection, and formulate Gamma I and Gamma II conjectures \cite{G-G-I_2016, G-I_2019, S-S_2020}.  Conjecture \ref{conj:main} is called a Gamma conjecture in \cite{A-G-I-S_2020}. Following \cite{iritani2023mirror}, we call Conjecture \ref{conj:main} the \emph{mirror symmetric Gamma conjecture}. 
\end{remark}

\subsection{Del Pezzo surfaces} 

The purpose of this paper is to investigate Conjecture \ref{conj:main} for del Pezzo surfaces of $\deg \geq 3$ with the Landau-Ginzburg mirror construction in the Gross-Siebert program, in particular in the setting of the construction by Gross-Hacking-Keel \cite{G-H-K-_15}. The mirror Landau-Ginzburg model counts Maslov index $2$ disks with boundary on a Lagrangian torus fiber given by the standard torus fibration \cite{Cho_2013, C-C-L-L-T_2020}. 

More generally, when $Y$ is non-toric, the Gross-Siebert program, and in particular the work of Gross-Hacking-Keel \cite{G-H-K_I} readily provides a framework for constructing the Landau-Ginzburg mirror $(\cX_t,W_t)$ to a Looijenga pair $(Y,D)$ where $D\in \vert -K_Y\vert$ is a reduced nodal curve with at least one singularity. The superpotential $W_t$ (morally) counts such Maslov index $2$ disks. 

We compute the B-model central charge using tropical geometry. By estimating the value of $W_t$ up to some small $t^\epsilon$, we compare its level sets with the level sets of its tropicalization and compute such discrepancy. The method follows the spirit of \cite{A-G-I-S_2020}. With the Landau-Ginzburg mirror model $(\cX_t,W_t)$ from \cite{G-H-K_I} (see Section \ref{sec:prelim} and \ref{sec:super} for its construction), our main theorem is the following.
\begin{theorem}[Theorem \ref{thm:main}]
Conjecture \ref{conj:main} is true for del Pezzo surfaces of $\deg \geq 3$.
\end{theorem}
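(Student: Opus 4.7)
The plan is to follow the tropical strategy of \cite{A-G-I-S_2020}, adapted to the Gross--Hacking--Keel mirror $(\cX_t,W_t)$ of a Looijenga pair $(Y,D)$ with $Y$ a del Pezzo surface of degree $\ge 3$. First, I would set up the tropical picture: there is a non-archimedean amoeba map $\Log_t:\cX_t\to B$ onto an integral affine surface $B\cong \R^2$ with focus--focus singularities dual to the nodes of $D$. The superpotential $W_t=\sum\vartheta_p$ (a sum of theta functions in the sense of \cite{G-H-K_I}) tropicalizes to a piecewise-linear function $W^{\trop}$ on $B$, and a local estimate on each toric chart, globalized using the wall-crossing identities among the $\vartheta_p$, shows that the level sets $\{|W_t|=t^s\}$ are $\Log_t$-close to $\{W^{\trop}=s\}$ up to error $O(t^\epsilon)$.

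Next, for $E\in K(Y)$ I would construct $\alpha(E)\in H_2(\cX_t,W_t^{-1}(+\infty);\Z)$ explicitly. Linearity and a splitting-type reduction let me restrict to line bundles $L$, in which case $\alpha(L)$ is the lift of a tropical cycle $\Xi^{\trop}\subset\{W^{\trop}\le c\}$, twisted fiberwise by a piecewise linear cocycle whose change-of-slope data along the edges of the tropical curve of $W^{\trop}$ is determined by the intersection numbers $c_1(L)\cdot D_i$. The compatibility of this cocycle across each focus--focus singularity is precisely the broken-line wall-crossing identity in the GHK scattering diagram, which ensures that $\alpha(L)$ closes up as a well-defined Lagrangian cycle.

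Then I would compute $Z_B(\alpha(E);1)=\int_{\alpha(E)}e^{-W_t}\Omega$ by pushing forward through $\Log_t$, obtaining a density on $\Xi^{\trop}$ that I would analyze by the Laplace/stationary-phase method in the fiber direction. The leading volume piece reproduces $\int_Y t^{-\omega}\ch(E)$, summed over tropical vertices corresponding to a toric model of $(Y,D)$. The Gamma factor $\hat\Gamma_Y=\exp(-\gamma c_1(Y)+\sum_{k\ge 2}\zeta(k)(k-1)!\ch_k(TY))$ then emerges from subleading corrections: the Euler--Mascheroni constant $\gamma$ arises from the one-loop determinant of the fiberwise Gaussian integral, while the $\zeta(k)$ factors are produced by the local asymptotic expansion of the integrand near each focus--focus singularity and each tropical vertex.

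The principal obstacle is the local analysis near the focus--focus singularities, which are absent in the toric or Batyrev Calabi--Yau settings treated in \cite{A-G-I-S_2020, Iritani_2011}. There the tropical model is smooth and the Laplace method applies directly; here I expect each focus--focus point to contribute a conifold-type integral whose asymptotic expansion supplies precisely the difference between the Gamma class of the non-toric $Y$ and that of its toric model. Establishing this local identity, and then assembling the vertex and singularity contributions into the global formula $Z_B(\alpha(E);1)=\int_Y t^{-\omega}\Psi(E)+O(t^\epsilon)$, will constitute the heart of the proof.
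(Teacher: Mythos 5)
Your overall strategy is the same as the paper's (the tropical method of \cite{A-G-I-S_2020} applied to the Gross--Hacking--Keel mirror), but as written it is a plan rather than a proof, and the two mechanisms you propose for producing the Gamma class are not the ones that actually work. First, a prerequisite you skip entirely: one must show that $W_t$ is a Laurent polynomial in some torus chart before any tropicalization makes sense. This is Lemma \ref{lem:expan-sup}, and it is not automatic --- for the cubic surface the unperturbed scattering diagram has a ray of every rational slope and contains no chamber at all, so one must pass to the perturbed diagram $\bar{\D}'$ and prove that broken lines reaching the special chamber $\sigma_0$ bend only at initial walls. Second, the constant $\gamma$ does not arise from a ``one-loop determinant of a fiberwise Gaussian integral'': in the leading-order statement ($z=1$, $t\to 0^+$) there is no stationary-phase analysis anywhere. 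It comes from the elementary exponential-integral asymptotics
\begin{equation*}
\log t\int_{\epsilon}^{-\infty}e^{-t^{\delta}}\,d\delta=-\gamma-\epsilon\log t+O(t^{\epsilon}),
\end{equation*}
applied along the unbounded ends of the amoeba (and along the legs $\Path_{v,\Out}$ of the mirror cycles). Likewise the $\zeta(2)$ terms come from the corner discrepancy between the amoeba $\cP_{\delta}$ and the tropical polytope $P_{\delta}$ --- each corner region $\cB^{q,q'}$ contributes $\tfrac12\zeta(2)/(\log t)^2$ via $\int_0^{\infty}\log(1+e^{-s})\,ds$ --- and the non-toric blowups enter this count only combinatorially, by adding $\sum_i l_i$ extra corners so that the total matches $c_2(Y)=c_2(\bar Y)+\sum_i l_i$. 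There is no local ``conifold-type integral'' at the singular points supplying the difference of Gamma classes.

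The most serious gap is in the cycle construction and its contribution for exceptional divisors. Your fiberwise piecewise-linear cocycle with slopes $c_1(L)\cdot D_i$ reproduces only the toric part of the cycle ($\Gamma_i$ with multiplicity $\kappa_i=D_i'\cdot L$ plus the twisted polytope $\Gamma_{\Center}$ in the central fiber). For $L=bE_{ij}$ the mirror cycle must \emph{leave} the torus chart $U_{\sigma_0}$ and wind $|b|$ times around the singular fiber of the local conic model $uv=\prod_{q\le j}(1+t^{c_{iq}}w)$; this is the piece $\Gamma_{\sing,ij,k}$, and its integral $(2\pi\sqrt{-1})(1-2k)\pi\sqrt{-1}$, summed over $k$, is precisely what produces the term $-\tfrac12(2\pi\sqrt{-1})^2\sum_{i,j}b_{ij}^2$ required to match $\ch_2(\cO(L))=\tfrac12 L^2$ with $L^2=L_{\toric}^2-\sum_{i,j}b_{ij}^2$. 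Nothing in your construction produces this sign-reversed quadratic contribution, and the estimates needed to control $e^{-W_t}$ on such a cycle (only the $z^{ln_i}$ Fourier modes survive, and those are $O(t^{\epsilon})$ by the constraint on $\|p_k(s)\|$) are exactly the content of Lemmas \ref{lemma:Gamma_sing_ij_monomial_integral} and \ref{lemma:Gamma_sing_ij_monomial_small}. You correctly identify the non-toric blowups as the crux, but the local identity you defer to is not the right one, and without it the argument does not close.
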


\begin{example*} 
As an example, let $\bar D = \sum_{i=1}^3\bar{D}_i$ be the union of three torus-invariant lines in $\bar Y=\bP^2$. We denote $Y=\mathrm{Bl}_p \bP^2$ by the blow-up of a non-singular point $p\in \bar{D}_1$, and let $D_i$ be the proper transform of $\bar{D}_i$ and $E$ be the exceptional divisor. Given an ample class $\omega = \sum_{i=1}^3 \lambda_i D_i + cE$ and a parameter $t\in\R_{>0}$, The Landau-Ginzburg mirror $(\cX_{t},W_t)$ of $(Y,D)$ is given as 
\begin{align*}
    \cX_{t} = \Spec(\C[\vartheta_{1,t}^{\pm},\vartheta_{2,t},\vartheta_{3,t}]/(\prod_{i=1}^3\vartheta_{i,t} = t^{\sum_{i=1}^3\lambda_i}+t^{\lambda_2+\lambda_3+c}\vartheta_{1,t})),\ W_t = \sum_{i=1}^3\vartheta_{i,t}.
\end{align*}
There is a torus chart $U_{\sigma_0}\simeq(\C^*)_{z_1,z_2}^2$ in $\cX_{t}$ corresponding to the special chamber $\sigma_0$ of the scattering diagram of $(Y,D)$ such that 
\begin{align*}
    \vartheta_{1,t}|_{U_{\sigma_0}} = t^{\lambda_1}z_1, \vartheta_{2,t}|_{U_{\sigma_0}} = t^{\lambda_2}z_2, \vartheta_{3,t}|_{U_{\sigma_0}} = t^{\lambda_3}z_1^{-1}z_2^{-1}+t^{\lambda_3+c}z_2^{-1}.
\end{align*} 

We use the torus fibration $\Log_t:U_{\sigma_0} \rightarrow M_{\R},(z_1,z_2)\mapsto(\log_t|z_1|,\log_t|z_2|)$ here. For $\mathcal E = \cO$, $\alpha(\cO)$ is the positive real locus of $U_{\sigma_0}$ in $\cX_{t}$. We have that 
\begin{align*}
     Z_{\topo}(\cO) = \frac{1}{2}((\sum_{i=1}^3\lambda_i)^2-(\lambda_1-c)^2)(\log t)^2 + \gamma(2\lambda_1+3\lambda_2+3\lambda_3-3c)\log t + 4(\gamma^2+\zeta(2)) - 4\zeta(2).
\end{align*}
Let $W^\trop_t$ be the tropicalization of $W$, which could be considered as a piecewise linear approximation. The boundary of the polytope in Figure \ref{fig:example_BlptP2_tropical_polytope} is the level set $W_t^{\trop} = 0$. The orange, purple, green line segments are where $\vartheta_{1,t}^{\trop},\vartheta_{2,t}^{\trop},\vartheta_{3,t}^{\trop}$ are $0$ respectively. The coefficients before $(\log t)^2$ and $\gamma\log t$ are the volume of the polytope and the affine length of the boundary of this polytope, and $4\zeta(2)$ are contributions from the four corners.

For $E = \cO(L)$ with $L = D_2, E, D_1$, $\alpha(\cO(L)) = \alpha(\cO)\cup\alpha(\cO_L)$. Under $\Log_t$, The mirror cycle $\alpha(\cO_L)$ projects to the blue and red rays in Figure \ref{fig:example_BlptP2_mirror_cycles} (see the actual construction in Section \ref{sec:mirror_cycle}). We have that 
\begin{align*}
    Z_{\topo}(\cO(D_2)) =& Z_{\topo}(\cO) - (2\pi\sqrt{-1})\big((\sum_{i=1}^3\lambda_i)\log t + 3\gamma\big) + \frac{1}{2}(2\pi\sqrt{-1})^2,\\
    Z_{\topo}(\cO(E)) =& Z_{\topo}(\cO) - (2\pi\sqrt{-1})\big((\lambda_1-c)\log t + \gamma\big) - \frac{1}{2}(2\pi\sqrt{-1})^2,\\
    Z_{\topo}(\cO(D_1)) =& Z_{\topo}(\cO) - (2\pi\sqrt{-1})\big((\lambda_2+\lambda_3+c)\log t + 2\gamma\big).
\end{align*}
The coefficients before $(2\pi\sqrt{-1})\log t$ are the affine lengths of the blue segments inside the polytope. The coefficients before $(2\pi\sqrt{-1})\gamma$ are the numbers of the blue rays. The coefficients of $(2\pi\sqrt{-1})^2$ are contributed from the red segments and the vertices of the blue segment. Each red segment contributes to $-1/2$ and each vertex of blue segments contributes to $1/2$.
\begin{figure}
    \centering
    \includesvg[width = 0.3\textwidth]{Figures/Example_BlptP2_tropical_polytope.svg}
    \caption{$W_t^{\trop} = 0$} 
    \label{fig:example_BlptP2_tropical_polytope}
\end{figure}

\begin{figure}
    \centering
    \includesvg[width = 1.0\textwidth]{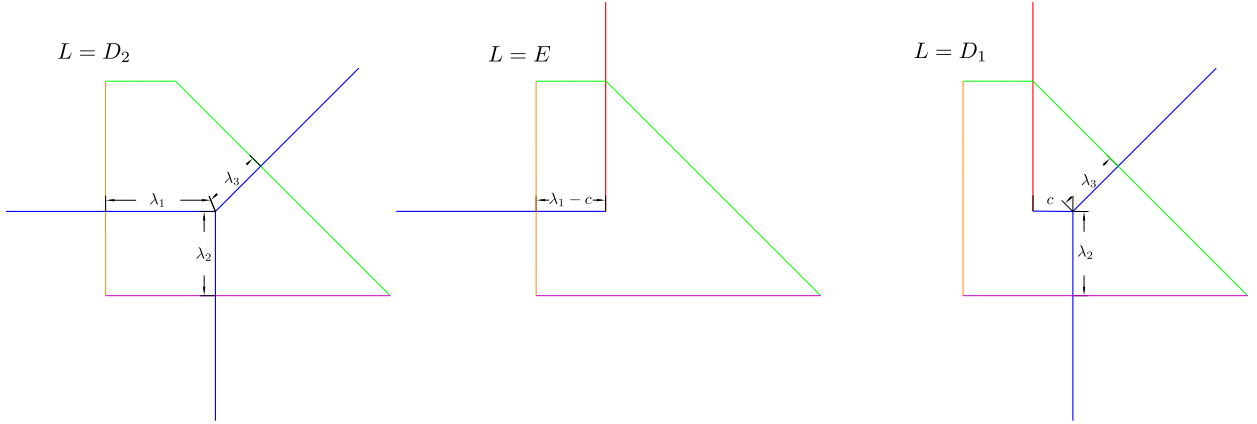}
    \caption{Mirror cycles $\alpha(\cO(L))$ for $L = D_2,E,D_1$}
    \label{fig:example_BlptP2_mirror_cycles}
\end{figure}

\end{example*}

We note that del Pezzo surfaces are complete intersections, and the mirror-symmetric Gamma conjecture for complete intersections is proved in \cite{Iritani_2011} for K-group elements pulled back from the ambient toric variety. Their superpotential is inherited from the superpotential of the ambient toric variety and we do not know whether it is the same as the intrinsic superpotential we construct here.

\subsection{Outline}

We recall the construction of the mirror Landau-Ginzburg model of a Looijenga pair $(Y,D)$ in Section \ref{sec:prelim}, and describe its superpotential in Section \ref{sec:super}. In particular, we show that there is a preferred torus chart on the mirror manifold $\cX_t$ such that the superpotential is a Laurent polynomial in Section \ref{sec:super}. In Section \ref{sec:structuresheaf} we compute the central charge $Z_B$ for the mirror cycle of the structure sheaf. Section \ref{sec:mirror_cycle} is about the construction of Lagrangian mirror cycles of any line bundle. We compute the oscillatory integrals on the mirror model for any mirror Lagrangian cycle in Section \ref{sec:anybundle}.

\subsection{Acknowledgements}

We thank Konstantin Aleshkin, Yushen Lin, Chiu-Chu Melissa Liu, Helge Ruddat, Song Yu, Eric Zalsow, Zhengyu Zong for valuable discussion. The work of BF is partially supported by NSFC 11831017, NSFC 11890661, NSFC 12125101 and a China MOST grant. The work of JW is partially supported by NSFC 11831017, NSFC 11890661 and NSFC 12125101. The work of YZ is partially supported by NKPs 2021YFA1002000, NSFC 11831017,  NSFC 11890661, and NSFC 12171006. 

\section{Preliminary}

\label{sec:prelim}

\subsection{Looijenga pairs and their mirror families}
Let us recall in this subsection some basics about Looijenga pairs and their mirror families. Our main references are \cite{G-H-K_I}, \cite{G-H-K-_15} and \cite{L-Z_22}. 
\begin{definition}
A \emph{Looijenga pair} $(Y,D)$ is a smooth rational projective surface $Y$ together with a connected reduced nodal curve $D \in |-K_Y|$ with at least one singular point. 

By the adjunction formula, the arithmetic genus of $D$ is 1. Thus, $D$ is either an irreducible rational curve with a single node or a cycle of smooth rational curves. We fix an \emph{orientation} of the cycle $D$, that is, a choice of a generator of $H_1(D,\mathbb{Z})\simeq \mathbb{Z}$ and an ordering $D=D_1 + \cdots + D_n$ of the irreducible components of $D$ compatible with the orientation. 
\end{definition}

By our assumption,  on $Y\setminus D$, there is a holomorphic volume form $\Omega$ that is unique up to scaling and has simple poles along each irreducible component of $D$. Thus, $Y\setminus D$ is a \emph{log Calabi-Yau surface}.

\begin{definition}
An \emph{internal} (-2)-\emph{curve} on a Looijenga pair $(Y,D)$ means a smooth rational curve of self-intersection -2 disjoint from $D$.  A pair $(Y,D)$ is called \textit{generic} if $Y$ has no internal $-2$ curves.
\end{definition}

\begin{definition}
An irreducible curve $E$ on $(Y,D)$ is an \emph{interior exceptional curve} if $E$ is isomorphic to a smooth rational curve, $E^2 =-1$, and $E \neq D_i$ for all $i$. Every exceptional curve on $Y$ is thus  either an interior exceptional curve or a component of $D$.
\end{definition}

\begin{definition}
Let $(Y,D)$ be a Looijenga pair. 
\begin{enumerate}
    \item A \emph{toric blow-up} of $(Y,D)$ is a birational morphism $\pi: \tilde{Y} \rightarrow Y$ such that if $\tilde{D}$ is the reduced scheme structure on $\pi^{-1}(D)$, then $(\tilde{Y},\tilde{D})$ is a Looijenga pair. In particular, $\tilde{Y}$ is smooth and $Y\setminus D = \tilde{Y} \setminus \tilde{D}$. Also, notice that if $\pi: \tilde{Y} \rightarrow Y$ is the blow-up of a node of $\bar{D}$, then $\pi$ is a toric blow-up.  
    \item A \emph{toric model} of $(Y,D)$ is a birational morphism $p:(Y,D) \rightarrow (\bar{Y},\bar{D})$ to a smooth toric surface $\bar{Y}$ with its toric boundary $\bar{D}$ such that $D \rightarrow \bar{D}$ is an isomorphism. 
\end{enumerate} 
\end{definition}

\begin{proposition}[\cite{G-H-K_I}, Proposition 1.3] Given a Looijenga pair $(Y,D)$, there exists a toric blow-up $(\tilde{Y}, \tilde{D})$ which has a toric model $(\tilde{Y}, \tilde{D}) \rightarrow (\bar{Y}, \bar{D})$.
\end{proposition}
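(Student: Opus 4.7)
The plan is to construct $\tilde Y$ in two stages: first perform toric blow-ups at the nodes of $D$ to enlarge the boundary cycle, then contract interior exceptional curves until arriving at a toric surface. The key structural input is the adjunction formula: any interior exceptional curve $E$ on a Looijenga pair satisfies $E\cdot D = -K_Y\cdot E = 1$, so $E$ meets $D$ transversally at a unique smooth point on a single component $D_i$. Hence its contraction $\pi\colon Y\to Y'$ restricts to an isomorphism $D\xrightarrow{\sim}\pi(D)$, raises $D_i^2$ by one while leaving the other boundary self-intersections untouched, and produces another Looijenga pair of Picard rank one less.

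As a preliminary I would observe that a birational morphism $\tilde Y\to Y$ is a toric blow-up precisely when it is an iterated blow-up at nodes of the current boundary: blowing up a smooth point of some $D_i$ produces an exceptional curve meeting the boundary at only one point, breaking the anticanonical cycle structure of the reduced preimage, while blowing up a node inserts a new $(-1)$-component between two existing ones and preserves the cycle.

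With these preparations, I would start from $(Y,D)$; if $D$ has fewer than three components, repeatedly blow up nodes to produce $(\tilde Y,\tilde D)$ with at least three boundary components. Then I iteratively contract interior exceptional curves on $\tilde Y$. The process terminates because the Picard rank strictly decreases at each step and the Looijenga property is preserved, ending at a pair $(\bar Y,\bar D)$ with no interior exceptional curves. The composite $p\colon\tilde Y\to\bar Y$ is an isomorphism on $\tilde D$ by construction.

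The main obstacle is showing that $(\bar Y,\bar D)$ is toric: a smooth projective rational surface with an anticanonical cycle of at least three smooth rational components and no interior exceptional curves must be toric with $\bar D$ its toric boundary. The plan for this step is to read off a complete rational fan in $\R^2$ from the cyclic combinatorics of $\bar D$, where primitive ray generators are determined up to $GL_2(\Z)$ by the requirement that consecutive pairs span $\Z^2$ and the self-intersections $\bar D_i^2$ encode the linear relations among them, and then to verify via MMP---using that every $(-1)$-curve on $\bar Y$ now lies in $\bar D$---that $\bar Y$ coincides with the smooth toric surface associated to this fan, reducing to the classification of minimal rational surfaces ($\mathbb{P}^2$ and the Hirzebruch $\mathbb{F}_n$, all of which are toric) followed by iterated equivariant blow-ups at torus-fixed boundary points.
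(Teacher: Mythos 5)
The paper itself offers no proof of this proposition --- it is quoted verbatim from \cite{G-H-K_I} --- so your attempt has to stand on its own. The overall shape (corner blow-ups, then contraction of interior exceptional curves) is the right starting point, and your preliminary observations are correct: by adjunction an interior exceptional curve $E$ satisfies $E\cdot D=-K_Y\cdot E=1$, so its contraction is an isomorphism on $D$ and preserves the Looijenga property. However, the step you isolate as ``the main obstacle'' is false as stated, and a counterexample is produced by your own algorithm. Take $Y=\mathbb{P}^2$ and $D=Q+L$ with $Q$ a smooth conic and $L$ a transverse line, so $n=2$; blow up one of the two nodes to reach $n=3$. The result is $(\mathbb{F}_1,\,E+\tilde{Q}+\tilde{L})$ with self-intersection sequence $(-1,3,0)$. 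The unique $(-1)$-curve on $\mathbb{F}_1$ is the boundary component $E$, so there are no interior exceptional curves and your contraction phase is empty; yet the pair is not toric: a smooth complete toric surface with exactly three boundary divisors is $\mathbb{P}^2$, and the topological Euler characteristic of $Y\setminus D$ here equals $1$ rather than $0$. (Another instance with $n=3$ from the outset is $\mathbb{F}_2$ with boundary the negative section, a fiber, and a generic member of $|C_0+3f|$; the surface $\mathbb{F}_2$ has no $(-1)$-curves at all.) Your argument therefore terminates at a non-toric pair with nothing left to contract.

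The missing idea is that further corner blow-ups create new interior exceptional curves, and these are genuinely needed. In the example, blowing up the node $E\cap\tilde{Q}$ turns the proper transform of the tangent line to $Q$ at the first node into an interior exceptional curve; contracting it yields $\mathbb{F}_2$ with its toric boundary $(-2,0,2,0)$. So the two phases cannot be run in sequence: one must interleave corner blow-ups (which raise the Picard number) with interior contractions (which lower it), choose which corners to blow up so as to manufacture contractible interior curves, and supply a termination argument for the resulting non-monotone process --- for instance via the Euler characteristic of $Y\setminus D$, which is unchanged by corner blow-ups, drops by one under each interior contraction, and vanishes for toric pairs, together with an explicit analysis of the minimal cases $(\mathbb{P}^2,D)$ and $(\mathbb{F}_a,D)$. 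That interleaving-and-termination argument is the substantive content of the cited proposition and is absent from your proposal.
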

In particular, by the above proposition, for any Looijenga pair $(Y,D)$, the corresponding log Calabi-Yau surface $U=Y\setminus D$ contains at least one open dense algebraic torus $(\C^{*})^2$. The canonical volume of $U$, up to scaling, agrees with the standard log volume form $d \log x \wedge d\log y$ of $(\C^{*})^2$ when restricted to any torus chart in $U$. 

In this paper, we fix an orientation of $D$ as follows. After a toric blow-up, we can assume that $(Y,D)$ has a toric model. Then, we orient $D=D_1 +\cdots+D_n$ by requiring that the corresponding rays of $D_1,\cdots, D_n$ in the toric fan of the toric model are ordered \emph{counterclockwise}. 

\begin{definition}
Let $(Y,D)$ be a Looijenga pair.
\begin{enumerate}\label{def:cones}
\item $\{x\in \Pic(Y)_{\R}:x^2>0\}$ is a cone with two connected components, one of which contains all ample classes. We denote this component by $C^+$.
\item For an ample class $H$, let $\tilde{\mathcal{M}} \subset \Pic(Y)$ be the set of classes $E$ such that $E^2=K_Y\cdot E=-1$ and $E\cdot H>0$. By Lemma 2.13 of \cite{G-H-K-_15}, $\tilde{\mathcal{M}}$ is actually independent of $H$. Then $C^{++}$ is defined by the inequalities $x\cdot E\ge 0$ for all $E\in \tilde{\mathcal{M}}$.
\item Let $C_{D}^{++} \subset C^{++}$ be the subcone where additionally $x \cdot \left[ D_i \right] \geq 0$ for all $i$.
\end{enumerate}   
\end{definition}

\begin{definition}
Given a Looijenga pair $(Y,D)$, we define the set of \emph{integer tropical points} $U^{\mathrm{trop}}(\mathbb{Z})$ of $U=Y \setminus D$ as follows:
\begin{align*}
     U^{\mathrm{trop}}(\mathbb{Z})& :=\{0\} \cup \{
     {\text{divisorial discrete valuations} \, \nu:\,\mathbb{C}(U)\setminus \{0\} \rightarrow \mathbb{Z} \mid \nu (\Omega) <0 } \}\\
    & := \{0\} \cup  \{(E,m) \mid m \in \mathbb{Z}_{>0}, E \subset (Z\setminus U),\,\Omega\, \text{has a pole along}\,E \}.
\end{align*}
\end{definition}
Here, in the second expression, $E$ is a divisorial irreducible component of the boundary in some partial compactification $U \subset Z$. For a toric Looijenga pair $(\bar{Y}, \bar{D})$, $U^{\mathrm{trop}}(\Z)$ can be identified with the cocharacter lattice of the algebraic torus $U$. For $U$ that is not an algebraic torus, $U^{\mathrm{trop}}(\Z)$ does not have the additive structure of a lattice.

\begin{definition}
A \emph{positive Looijenga pair} $(Y,D)$
is a Looijenga pair such that $D$ supports an ample divisor. Equivalently, $(Y,D)$ is positive if $U=Y\setminus D$ is the minimal resolution of an affine surface with at worst Du Val singularities. 
\end{definition}

\begin{definition}
Denote by $\NE(Y)$ the monoid $\NE(Y)_{\R} \cap A_1(Y,\Z)$ where $\NE(Y)_{\R} \subset A_1 (Y, \R)$ is the cone generated by effective curve classes. If the Looijenga pair $(Y,D)$ is positive, then $\NE(Y)$ will be a finitely generated monoid, and thus we can define the $\mathbb{C}$-algebra $\C[\NE(Y)]$ associated to the monoid $\NE(Y)$. 
\end{definition}

Let us recall the following main result in \cite{G-H-K_I}.
\begin{theorem}[\cite{G-H-K_I}, Theorem 1.8]
Given a positive Looijenga pair $(Y,D)$, let $R=\C[\NE(Y)]$ and 
\[
A :=\bigoplus_{q \in U^{\trop}(\Z)} R \cdot \vartheta_{q}
\]
be the free $R$-module with a basis parametrized by $U^{\trop} (\Z)$. There is a finitely generated $R$-algebra structure on $A$ such that the  structure coefficients of the algebra are determined by relative Gromov-Witten invariants of $(Y,D)$ counting rational curves meeting $D$ in a single point. Furthermore, 
\[
\cX := \Spec \, A \rightarrow \Spec \, R 
\]
is a flat affine family of Gorenstein semi-log canonical surfaces.
\end{theorem}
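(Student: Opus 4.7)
The plan is to realize the multiplication on $A$ via the broken-line and scattering-diagram formalism of Gross-Hacking-Keel, which simultaneously constructs the algebra structure and identifies its structure constants with the stated log Gromov-Witten invariants. First I would reduce to the case where $(Y,D)$ admits a toric model $p:(Y,D) \to (\bar Y,\bar D)$, using that toric blow-ups leave $U = Y \setminus D$ unchanged. Working in the cocharacter lattice $M$ of the big torus of $\bar Y$, together with a piecewise-linear modification near the images of the interior exceptional curves of $p$, provides an identification of $M_{\R}$ with the integral affine manifold underlying $U^{\trop}(\Z)$.

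Next I would construct a consistent scattering diagram $\mathfrak{D}$ in $M_{\R}$ with values in the completion of $R$ at the maximal monoid ideal $\mathfrak{m} = \NE(Y) \setminus \{0\}$. The initial data consist of one incoming wall for each interior exceptional curve of $p$, decorated by a log-function involving its class in $\NE(Y)$. Using the Kontsevich-Soibelman algorithm, one adds outgoing walls order by order in $\mathfrak{m}$ to restore consistency. The crucial input, supplied by the tropical/log Gromov-Witten correspondence of Gross-Pandharipande-Siebert, is that the wall functions appearing in this process are generating series of log Gromov-Witten invariants counting $\mathbb{A}^1$-curves on $(Y,D)$.

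For each $q \in U^{\trop}(\Z)$ and each generic basepoint $Q \in M_{\R}$, a broken line with asymptotic direction $q$ and endpoint $Q$ carries a monomial in $R[M]$ determined by its interactions with the walls of $\mathfrak{D}$. The theta function $\vartheta_q$ is defined as the sum of these monomials, and the product $\vartheta_{q_1} \cdot \vartheta_{q_2} = \sum_r N(q_1,q_2,r)\, \vartheta_r$ is defined by enumerating pairs of broken lines with asymptotic directions $q_1,q_2$ that terminate at the same generic point. Consistency of $\mathfrak{D}$ forces $\vartheta_q$ and the coefficients $N(q_1,q_2,r) \in R$ to be independent of $Q$ and implies associativity of the multiplication; the same consistency argument identifies each $N(q_1,q_2,r)$ with a relative/log Gromov-Witten count of rational curves on $(Y,D)$ meeting $D$ at a single point with tangency data dictated by $q_1,q_2,r$.

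Finally I would address finiteness and the geometry of the fibers. Positivity of $(Y,D)$ enters twice: it makes $\NE(Y)$ a finitely generated rational polyhedral monoid, hence $R$ finitely generated, and it bounds the curve classes contributing to any given structure constant, so each $N(q_1,q_2,r)$ is polynomial rather than a formal series. A finite set of theta functions supported on the rays of a simplicial refinement of $C^{++}_D$ can then be shown to generate $A$ over $R$. Flatness of $\cX \to \Spec R$ is automatic from freeness of $A$ as an $R$-module, and the $\vartheta_q$-basis exhibits $\cX$ as affine. For the Gorenstein semi-log canonical property, I would identify the central fiber with the $n$-vertex $V_n$, an explicit cycle of affine planes for which both properties are visible, and propagate them to nearby fibers through an explicit description of the dualizing sheaf in terms of the canonical volume form $\Omega$ on each torus chart. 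The principal obstacle is the consistency of $\mathfrak{D}$: this single step underlies both associativity of the product and the enumerative interpretation of its structure constants, and depends on the nontrivial tropical-to-log Gromov-Witten correspondence.
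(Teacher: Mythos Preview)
The paper does not supply a proof of this statement: it is quoted verbatim as Theorem 1.8 of \cite{G-H-K_I} in the preliminaries section and used as a black box. The surrounding subsections recall the scattering diagram $\bar{\D}$ and broken-line formalism only to the extent needed to compute the superpotential later, explicitly noting that ``the geometric interpretation of the structure coefficients will remain inside the black box throughout this paper.'' So there is no in-paper proof to compare your proposal against.

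That said, your outline is a fair summary of the Gross--Hacking--Keel argument itself: reduction to a toric model, the initial scattering diagram coming from the interior exceptional curves, Kontsevich--Soibelman consistency, theta functions via broken lines, positivity forcing polynomiality of the structure constants, and the central fiber as the $n$-vertex $V_n$. One small slip: in the paper's conventions $N$ is the cocharacter lattice and $M$ its dual, opposite to what you wrote. More substantively, the enumerative interpretation of the structure constants in \cite{G-H-K_I} is obtained not through the tropical/log GW correspondence of \cite{GPS} applied to wall functions, but directly from the definition of the canonical scattering diagram $\mathfrak{D}^{\can}$, whose wall functions are \emph{defined} as generating series of relative invariants; the GPS-style diagram $\bar{\D}$ is then matched to $\nu(\mathfrak{D}^{\can})$ (this is the content of Theorem~\ref{thm:conn-GPS} here). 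Your sketch conflates these two diagrams slightly, but the overall strategy is correct.
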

We often refer $\cX \rightarrow \Spec \, R$ as the \emph{mirror family} of $(Y,D)$ and $A$ as the \emph{mirror algebra}. The basis elements for $A$ in $\{\vartheta_q \mid q \in U^{\trop}(\Z)\}$ are called \emph{theta functions}.

In the later work of \cite{L-Z_22}, the moduli meaning of the mirror family is proved:
\begin{theorem}
Given a positive Looijenga pair $(Y,D)$ and its mirror family $\cX \rightarrow \Spec R$, there exists a canonical compactification of $\mathcal{X}$ to a family of surface pairs $(\bar{\cX},\cD)\rightarrow \Spec R$ such that the restriction to the open dense torus orbit $T_Y=\Pic(Y)\otimes \mathbb{G}_m \subset \Spec R$ is the universal family of marked pairs deformation equivalent to the original Looijenga pair $(Y,D)$. 
\end{theorem}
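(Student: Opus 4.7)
The plan is to split the proof into (i) constructing the compactification $\bar{\cX}$ together with its boundary $\cD$, and (ii) identifying the restriction over $T_Y$ with the universal family via the Torelli theorem for Looijenga pairs.

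For (i), I would introduce an auxiliary $\Z_{\geq 0}$-grading on the mirror algebra $A$, which is only $\NE(Y)$-graded on its own. Fix an integral class $H \in C_D^{++}\cap \Pic(Y)$ as a polarization, and assign $\vartheta_q$ a weight by pairing $H$ with the piecewise-linear structure on $U^{\trop}(\Z)$ inherited from the toric charts near each $D_i$. The associated Rees-type graded $R$-algebra $A^{\bullet}$ satisfies $A^0 = R$, and I would set $\bar{\cX} := \Proj_R A^{\bullet}$. The open subset $\cX \subset \bar{\cX}$ is then the locus where a chosen degree-one section is nonzero, and the complement $\cD$ is cut out by ``boundary theta functions'' obtained as limits along the rays $\R_{\geq 0}\cdot v_i \subset U^{\trop}$ corresponding to the irreducible components $D_i$. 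The decomposition $\cD = \cD_1 + \cdots + \cD_n$ tracks the components of the original $D$, and the Gorenstein semi-log canonical fibers of \cite{G-H-K_I} extend to flat fibers of $\bar{\cX} \to \Spec R$ by standard Proj-flatness.

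For (ii), I would invoke the Torelli-type theorem of Gross-Hacking-Keel for Looijenga pairs: a marked Looijenga pair $(Y',D')$ deformation equivalent to $(Y,D)$ is determined up to isomorphism by its period point $\phi_{D'} \in T_Y = \Pic(Y)\otimes \mathbb{G}_m$, extracted from the mixed Hodge structure on $H^2(Y'\setminus D')$. The argument reduces to two claims at a closed point $t \in T_Y \subset \Spec R$: that the fiber $(\bar{\cX}_t,\cD_t)$ is a smooth Looijenga pair deformation equivalent to $(Y,D)$, and that its period equals $t$. The first should follow from the flatness and normal-crossing structure provided by \cite{G-H-K_I}, combined with the fact that the $\NE(Y)$-grading of $A$ restricts to a $T_Y$-action on generic fibers that freely transports smoothness from a single base point. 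The second, matching period with parameter, is the substantive step.

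The main obstacle is this final period computation. The structure constants of $A$ are defined by relative Gromov-Witten counts on the original $(Y,D)$, while $\phi_{\cD_t}$ is a Hodge-theoretic invariant of the deformed pair. Bridging the two would require expressing the holomorphic volume form $\Omega_t$ on $\bar{\cX}_t \setminus \cD_t$ in theta coordinates and computing residues along each $\cD_{i,t}$ to extract the period pairing against $D_i$. I expect to lean on the gluing philosophy of the Gross-Siebert program: the mirror family is glued from toric charts whose combinatorics literally tropicalize the period data, so a generic fiber's period should agree tautologically with the torus coordinate used for gluing, up to Gromov-Witten corrections that must be shown to cancel via the scattering-diagram consistency underlying \cite{G-H-K_I}. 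Controlling these corrections uniformly over $T_Y$, and descending the result from the analytic to the algebraic category so that one genuinely obtains the universal family over all of $T_Y$, is where the serious work lies.
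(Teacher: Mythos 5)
This theorem is not proved in the paper at all: it is quoted verbatim from \cite{L-Z_22} (the moduli interpretation of the mirror family), so there is no in-paper argument to compare against. Your proposal must therefore stand on its own, and as written it has both an admitted gap and a concrete error.

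The admitted gap is fatal to calling this a proof: the entire content of the ``universal family'' claim is that the period point of the fiber over $t \in T_Y$ equals $t$, and you explicitly defer this computation (``the serious work lies'' there). Sketching that one should express $\Omega_t$ in theta coordinates and hope the Gromov--Witten corrections cancel by scattering consistency is a plan, not an argument; without it you have at best constructed \emph{a} family over $T_Y$, not identified it with the universal one of \cite{G-H-K-_15}.

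The concrete error is in your first claim of step (ii). You assert that the fiber $(\bar{\cX}_t,\cD_t)$ over a closed point of $T_Y$ is a \emph{smooth} Looijenga pair, and you propose to propagate smoothness from one fiber to all of $T_Y$ via a torus action. Both halves are false. As the paper itself notes immediately after the theorem statement, the compactified fibers over $T_Y$ can be singular, with at worst Du Val singularities; these occur over the special loci in the period domain where some root pairs trivially with the period point, and they are genuinely part of the universal family of marked pairs. Correspondingly, no torus action on the family can ``freely transport smoothness'': the sub-torus $T^D$ acts only by changing the marking of the boundary (Theorem \ref{thm:rel-equi} and the discussion following Definition \ref{def:relative-torus}), and its orbits do not sweep out all of $T_Y$; the isomorphism type of the fiber genuinely varies over $T_Y$, which is exactly what universality requires. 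Your Torelli reduction should instead be phrased for pairs with at worst Du Val singularities (or their minimal resolutions), and the locus of singular fibers must be matched with the root hyperplanes in $T_Y$, which is additional work your sketch does not address.
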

The compactified fibers in the family $\cX \rightarrow T_Y$ can be singular, with at worst Du Val singularities. We call them marked pairs because they are equipped with the auxiliary datum of a marking of the Picard group - an identification of the Picard group with $\Pic (Y)$, and a marking of the boundary - a choice of a smooth point on each component in the boundary. 

The construction of the structure coefficients for the mirror algebra $A$ has been updated with the newly developed methods to construct enumerative invariants, e.g. \cite{K-Y_19} and \cite{G-S_21}. However, the geometric interpretation of the structure coefficients will remain inside the black box throughout this paper. For our purpose of writing down the superpotential, we will use the more computationally direct method used in \cite{G-H-K_I}, that is, the machinery of \emph{scattering diagrams} and \emph{broken lines}, which we will recall in the next subsection. 

\subsection{Scattering diagrams}\label{ssec:scatt}
In this subsection, we fix a positive Looijenga pair $(Y,D)$ and assume that it has a toric model $p: (Y,D) \rightarrow (\bar{Y},\bar{D})$. Write $\bar{D}=\bar{D}_1+\cdots+\bar{D}_n$. For each $i\,(1 \leq i \leq n)$, we fix an ordered collection  $\{E_{ij}\}_{i=1}^{l_i}$ of exceptional divisors of $p$ over the component $\bar{D}_i$. Here, we only require the Looijenga pair to be positive so that $\NE(Y)$ will be finitely generated. All the results stated in this subsection will remain true for general Looijenga pairs if we choose $\sigma_P \subset A_1 (Y, \R)$ a strictly convex rational polyhedral cone containing $\NE(Y)_{\R}$ and consider instead the algebra $\C[P]$ where $P:=\sigma_P \cap A_1 (Y,\Z)$. \\

As in \cite{G-H-K_I}, $U^{\trop}(\Z)$ is the set of integer tropical points of an integral affine manifold $U^{\trop}(\mathbb\R)$ with a single singularity at the point $0$ in $U^{\trop}(\Z)$. By pushing the singularity of $U^{\trop}(\R)$ to the infinity, we get a piecewise linear isomorphism 
\[
\nu: U^{\trop}(\R) \rightarrow N_{\R}
\]
where $N$ is the cocharater lattice of the algebraic torus $\bar{Y} \setminus \bar{D}$ and $N_{\R}=N\otimes {\R}$. 

Let $M$ be the dual lattice of $N$, i.e., the character lattice of $\bar{Y}\setminus \bar{D}$. Denote by $\Sigma$ the toric fan of $\bar{Y}$ supported on $N_{\R}$. Denote by $\Sigma^{(1)}$ the set of rays in $\Sigma$ and $n_i$ the first lattice point on the ray $\rho_i$. Here, $\rho_i$ is the ray in $\Sigma$ corresponding to $\bar{D}_i$. 

\begin{remark}
Here, we use notations opposite to \cite{G-H-K_I} where $M$ will be the cocharacter lattice of the algebraic torus. We made this change to match the standard notations in the toric literature.    
\end{remark}

\begin{definition}
Let $P$ be \emph{toric monoid}, that is, a commutative monoid whose Grothendieck group $P^{\mathrm{gp}}$ is a finitely generated abelian group and $P=P^{\mathrm{gp}} \cap \sigma_P$ where $\sigma_P \subset P^{\mathrm{gp}}\otimes_{\Z}\R$ is a convex rational polyhedral cone. A \emph{$\Sigma$-piecewise linear function} $\varphi:N_{\R} \rightarrow  P^{\mathrm{gp}}_{\R}$ is a continuous function such that for each 2-dimensional cone $\sigma$ in $\Sigma$, $\varphi\mid_{\sigma}$  is given by an element $\varphi_{\sigma} \in \mathrm{Hom}(N,P^{\mathrm{gp}}) = M\otimes_{\Z}P^{\mathrm{gp}}$. 

For each $\rho \in \Sigma$ with adjacent 2-dimensional cones $\sigma, \sigma'$, we can write 
\[
\varphi_{\sigma} - \varphi_{\sigma'} = m_{\rho}\otimes \kappa_{\rho,\varphi}
\]
where $m_{\rho} \in M$ is the unique primitive element annihilating $\rho$ and positive on $\sigma$ and $\kappa_{\rho,\varphi} \in P^{\mathrm{gp}}$. We call $\kappa_{\rho,\varphi}$ the \emph{bending parameter}. If we switch the ordering of $\sigma$ and $\sigma'$, the sign of $m_{\rho}$ will flip but $\kappa_{\rho,\varphi}$ stays the same. So the bending parameters are well-defined. 

We say a $\Sigma$-piecewise linear function $\varphi: N_{\R} \rightarrow P^{\mathrm{gp}}$ is \emph{strictly $P$-convex} if each bending parameter $\kappa_{\rho,\varphi}$ of $\varphi$ is in $P \setminus P^{\times}$ where $P^{\times}$ is the group of invertible elements of $P$. 
\end{definition}

In \cite{G-H-K_I}, a wall-crossing structure called the 
\emph{canonical scattering diagram} $\mathfrak{D}^{\can}$ is drawn on $U^{\trop}(\R)$. Via the piecewise linear isomorphism $\nu$, we can define a scattering diagram $\bar{\D}=\nu(\D^{\can})$ on $N_{\R}$. It is the definition of $\bar{\D}$ that we will recall here and it is $\bar{\D}$ that we will use in this paper. \\

Our main references for this subsection are Section 3 and Section 1 of \cite{G-H-K_I}. Before we introduce the notion of scattering diagrams, we first need some basic setup. Let $E^{\oplus}$ be the monoid generated by the classes of the exceptional curves of $p$ and $E$ the lattice spanned by $E^{\oplus}$. Then, 
\begin{align*}
    A_1 (Y, \Z) &= p^{*}(A_1 (\bar{Y},\Z))\oplus E 
\end{align*}
So, we identify $A_1 (\bar{Y},\Z)$ as subgroup of $A_1 (Y, \Z)$ via $p^{*}$. 

We recall the $\Sigma$-piecewise linear function $\varphi: N \rightarrow A_1 (\bar{Y},\Z) \simeq p^{*}(A_1 (\bar{Y},\Z))$ given in Lemma 1.13 of \cite{G-H-K_I}. Define $s:\Z^{\Sigma(1)}\rightarrow N$ to send the basis $t_{\rho_{i}}$,  to $n_i$. Then, 
\[
A_1 (\bar{Y}, \Z) \ni \beta \mapsto \sum_{i=1}^{n} (\bar{D}_i \cdot \beta) t_{\rho_i}
\]
identifies $A_1(\bar{Y},\Z)$ with $\mathrm{Ker}(s)$, giving rise to an exact sequence 
\[
0 \rightarrow A_1(\bar{Y},\Z) \rightarrow \Z^{\Sigma(1)} \overset{s}{\rightarrow} N \rightarrow 0.  
\]
Let $\tilde{\varphi}:N \rightarrow \Z^{\Sigma(1)}$ be the unique piecewise linear function satisfying $\tilde{\varphi}(n_i) = t_{\rho_i}$. Let $j:\Z^{\Sigma(1)} \rightarrow A_1(\bar{Y},\Z)$ be any splitting and set $\varphi:=j\circ \tilde{\varphi}$. Then, Lemma 1.13 of \cite{G-H-K_I} shows that up to a linear function, $\varphi$ is the unique strictly $\NE(\bar{Y})$-convex $\Sigma$-piecewise linear $A_1(\bar{Y},\Z)$-valued function with bending parameters 
\[
\kappa_{\rho_i, \varphi} = [p^{*}(\bar{D}_i)].
\]
Here, we again identify $\bar{D}_i$ with its image $p^{*}(\bar{D}_i)$ in $\NE(Y) \subset A_1 (Y,\Z)$. 

Now, let 
\[
Q = \{(n,s) \in N \oplus A_1(Y,\Z) \mid \exists \,s' \in p^{*}(\NE(\bar{Y}))\oplus E \,\text{such that}\, s=s'+\varphi(n)\}. 
\]
Let $r:Q \rightarrow N$ be the projection map. Notice that by the strict $\NE(\bar{Y})$-convexity of $\varphi$, $Q^{\times}=E$. Let $\mathfrak{m}_Q = Q\setminus Q^{\times}$ and let $\widehat{\C[Q]}$ denote the completion of $\C[Q]$  with respect to the monomial ideal $\mathfrak{m}_Q$.

\begin{definition} \label{def:scatt}
We define a \emph{scattering diagram} for the pair $Q,\,r:Q \rightarrow N$. This is  a collection $\D= \{ (\mathfrak{d},f_{\mathfrak{d}}) \}$ of \emph{walls}
where 
\begin{enumerate}
    \item $\mathfrak{d} \subset N_{\R}$ is given by  
    \[
    \mathfrak{d} = \R_{\geq 0}\cdot n_0
    \]
    if $\mathfrak{d}$ is a \emph{incoming ray} and by 
    \[
    \mathfrak{d} = - \R_{\geq 0}\cdot n_0
    \]
    if $\mathfrak{d}$ is a \emph{outgoing ray} and by
    \[
     \mathfrak{d} = \R\cdot n_0 
    \]
    if $\mathfrak{d}$ is a \emph{line}, a some $n_0 \in N \setminus\{0\}$. The set $\mathfrak{d}$ is called the \emph{support} of the line or ray.  We call $(\mathfrak{d},f_{\mathfrak{d}})$ a \emph{wall} in $\D$. 
    \item $f_{\mathfrak{d}} \in \widehat{\C[Q]}$.
    \item $f \equiv 1 \mod \mathfrak{m}_Q$.
    \item $f_{\mathfrak{d}} = 1 + \sum_p c_p z^p$ for $c_p \in \C$, $r(p) \neq 0$ a positive multiple of $n_0$.
    \item for any $k >0$, there are only a finite number of rays $(\mathfrak{d}, f_{\mathfrak{d}}) \in \D$  with $f_{\mathfrak{d}} \not\equiv 1 \mod \mathfrak{m}_{Q}^{k}$.
\end{enumerate}
\end{definition}

\begin{definition}
Given a loop $\gamma$ in $N_{\R}$ around the origin, we define the \emph{path ordered product}   
\[
\theta_{\gamma, \D}: \widehat{\C[Q]} \rightarrow \widehat{\C[Q]}
\]
as follows. For each $k>0$, let $\D[k] \subset \,    \D$ be the subset  of rays with $f_{\mathfrak{d}} \not\equiv 1 \mod \mathfrak{m}_{Q}^k$. This is a set finite by (5) in Definition  \ref{def:scatt}. For $\mathfrak{d} \in \D [k]$ with $\gamma(t_0) \in \mathfrak{d}$, define 
\[
\theta^{k}_{\gamma, \mathfrak{d}}: \widehat{\C[Q]}/ \mathfrak{m}_{Q}^k \rightarrow \widehat{\C[Q]}/ \mathfrak{m}_{Q}^k
\]
by  
\[
\theta^{k}_{\gamma,\mathfrak{d}}(z^q) = z^{q} f_{\mathfrak{d}}^{\langle m_{\mathfrak{d}},r(q)\rangle}
\]
for $m_{\mathfrak{d}}$ a primitive element in $M$ satisfying, with $n$ a non-zero tangent vector of $\mathfrak{d}$, 
\[
\langle m_{\mathfrak{d}},n \rangle=0,\quad \langle m_{\mathfrak{d}},\gamma'(t_0) \rangle <0. 
\]
If $\gamma$ crosses the rays $\mathfrak{d}_1, \cdots, \mathfrak{d}_l$ in order with $\D = \{\mathfrak{d}_i\}_{i=1}^l$, we define 
\[
\theta^{k}_{\gamma, \D} = \theta^{k}_{\gamma,\mathfrak{d}_l}\circ \cdots \circ \theta^{k}_{\gamma,\mathfrak{d}_1}.
\]
We then define $\theta_{\gamma, \D}$ by taking the limit as $k \rightarrow \infty$.
\end{definition}

Here is a theorem that follows from the result obtained by Kontsevich and Soibelman in \cite{K-S_06} and proved as Theorem 1.4 in \cite{GPS}.

\begin{theorem}\label{thm:cons-scatter}
Let $\D$ be a scattering diagram. Then, there is another scattering diagram $\mathrm{Scatter}(\D)$ containing $\D$  such that $\mathrm{Scatter}(\D) \setminus \D$ consists of only outgoing rays and $\theta_{\gamma,\mathrm{Scatter}(\D)}$ is the identity for $\gamma$  a loop around the origin. 
\end{theorem}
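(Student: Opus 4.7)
The plan is to build $\mathrm{Scatter}(\D)$ order-by-order in the maximal monomial ideal $\mathfrak{m}_Q$. Axiom (5) of Definition \ref{def:scatt} guarantees that at each finite order $k$ only finitely many walls of $\D$ contribute a nontrivial function modulo $\mathfrak{m}_Q^{k+1}$, so every stage is a finite combinatorial problem, and axiom (3) makes $\theta_{\gamma, \D} \equiv \mathrm{id} \pmod{\mathfrak{m}_Q}$ the trivial base case. I would then proceed by induction, at each stage adjusting by a finite number of new outgoing rays and checking that adjustments at order $k+1$ do not disturb adjustments at lower orders.

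Inductively, suppose I have produced a scattering diagram $\D^{(k)}$ which contains $\D$, whose extra rays are all outgoing, and for which $\theta_{\gamma, \D^{(k)}} \equiv \mathrm{id} \pmod{\mathfrak{m}_Q^{k+1}}$. Computing the path-ordered product modulo $\mathfrak{m}_Q^{k+2}$ yields $\theta_{\gamma, \D^{(k)}} = \exp(\eta)$ for a derivation $\eta$ of $\widehat{\C[Q]}/\mathfrak{m}_Q^{k+2}$ with values in $\mathfrak{m}_Q^{k+1}/\mathfrak{m}_Q^{k+2}$; here I use that the composition of wall-crossing automorphisms sits in the pro-unipotent group obtained by exponentiating the Lie algebra of such derivations. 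The next step is to decompose $\eta = \sum_n \eta_n$ into pieces indexed by primitive directions $n \in N \setminus \{0\}$, according to the weight-space decomposition coming from $r: Q \to N$, which is preserved by each individual wall-crossing. For each $n$ that appears, I adjoin a new outgoing ray $\mathfrak{d}_n = -\R_{\geq 0}\, n$ with a wall function chosen, modulo $\mathfrak{m}_Q^{k+2}$, so that crossing $\mathfrak{d}_n$ realizes $\exp(-\eta_n)$; the relevant fact is that every derivation supported on a single direction and on one order of $\mathfrak{m}_Q$ is realized by a single such crossing. Adding these finitely many outgoing rays to $\D^{(k)}$ produces $\D^{(k+1)}$ with $\theta_{\gamma, \D^{(k+1)}} \equiv \mathrm{id} \pmod{\mathfrak{m}_Q^{k+2}}$. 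Assembling all orders produces $\mathrm{Scatter}(\D)$ as a genuine scattering diagram, since walls added at stage $k+1$ have functions $\equiv 1 \pmod{\mathfrak{m}_Q^{k+1}}$ and hence do not disturb lower-order computations.

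The main obstacle is the direction-by-direction decomposition of $\eta$ and the verification that a single outgoing ray in each direction genuinely cancels its piece. Encoding both requires working in the $Q$-graded Lie algebra of $\widehat{\C[Q]}$-derivations and invoking the Baker-Campbell-Hausdorff formula inside the pro-unipotent group generated by wall-crossings; the outgoing (rather than incoming) orientation is forced by the sign convention $\langle m_{\mathfrak{d}}, \gamma'(t_0)\rangle < 0$ together with the counterclockwise orientation of $\gamma$. This is precisely the content of the Kontsevich-Soibelman factorization of \cite{K-S_06} and of Theorem 1.4 of \cite{GPS}, and because their argument uses only the axioms enumerated in Definition \ref{def:scatt} and the fact that $Q$ is a finitely generated toric monoid with $Q^{\times} = E$, it transports without change to the present setting.
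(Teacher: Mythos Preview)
The paper does not supply its own proof of this theorem; it simply records that the result ``follows from the result obtained by Kontsevich and Soibelman in \cite{K-S_06} and proved as Theorem 1.4 in \cite{GPS}.'' Your sketch is exactly the standard order-by-order argument from those references, and you correctly note at the end that you are recapitulating \cite{K-S_06} and \cite{GPS}. So your proposal is correct and coincides with what the paper invokes.
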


We consider the scattering diagram $\bar{\D}_0$, over $\widehat{\C[Q]}$ given by 
\begin{equation}
    \bar{\D}_0 =
\{(\R \cdot n_i, \prod^{l_i}_{j=1}(1+ z^{(n_i,\varphi(n_i) - [E_{ij}])}) \mid 1 \leq i \leq n \}
\end{equation}
\begin{theorem}[\cite{G-H-K_I}, Theorem 3.25] \label{thm:conn-GPS}
Let $\bar{\D}$ be $\mathrm{Scatter}(\bar{\D}_0)$ in the sense of Theorem \ref{thm:cons-scatter}. In particular, $\bar{\D} \setminus {\bar{\D}_0}$ has only outgoing rays and no incoming rays. We have $\bar{\D} = \nu(\D^{\can})$.
\end{theorem}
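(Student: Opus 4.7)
The plan is to exploit uniqueness of the consistent completion of a scattering diagram, as encapsulated by Theorem \ref{thm:cons-scatter}: once the incoming walls and their wall-functions are fixed, the outgoing rays required to trivialize the path-ordered product around the origin are determined. Thus it suffices to verify three things about $\nu(\D^{\can})$: (i) it is a consistent scattering diagram on $N_\R$ with respect to the pair $(Q, r:Q\to N)$; (ii) its incoming walls are exactly those of $\bar{\D}_0$, supported on the rays $\R\cdot n_i$ for $i=1,\dots,n$; and (iii) the wall-function attached to $\R\cdot n_i$ equals $\prod_{j=1}^{l_i}\bigl(1 + z^{(n_i,\varphi(n_i)-[E_{ij}])}\bigr)$. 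Granting (i)--(iii), uniqueness in Theorem \ref{thm:cons-scatter} forces $\nu(\D^{\can}) = \mathrm{Scatter}(\bar{\D}_0) = \bar{\D}$.

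\textbf{Consistency and supports of incoming walls.} For (i), the canonical scattering diagram $\D^{\can}$ is consistent by construction: it is precisely the diagram whose broken-line counts make the theta functions $\{\vartheta_q\}$ an associative algebra basis, which is equivalent to $\theta_{\gamma,\D^{\can}}=\mathrm{id}$ around the singular point $0\in U^{\trop}(\R)$. Since $\nu:U^{\trop}(\R)\to N_\R$ is piecewise-linear and integral-affine on each maximal cone of $\Sigma$, it identifies the wall-crossing automorphisms of $\D^{\can}$ with those of $\nu(\D^{\can})$ under change of variables, so consistency is preserved. For (ii), a wall of $\D^{\can}$ is \emph{incoming} exactly when its supporting ray emanates outward from the origin; by the definition of the canonical diagram in terms of divisorial valuations centered at the boundary, the only such directions at $0$ are the tropical images of the components $D_1,\dots,D_n$, which $\nu$ sends to the rays $\R_{\geq 0} n_i$. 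These extend to full lines by combining with the collinear outgoing contributions lying along $-\R_{\geq 0} n_i$.

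\textbf{Wall-functions via the toric model.} For (iii), one interprets the wall-function on $\R\cdot n_i$ as a generating series of relative Gromov--Witten counts of rational curves on $Y$ meeting $D$ in a single point of $D_i$ with contact order $m$, weighted by $z^{(m n_i,\beta)}$ with $\beta$ the curve class. Using the toric model $p:(Y,D)\to(\bar Y,\bar D)$, such curves are tightly constrained: their push-forwards to $\bar Y$ are torus-invariant $A^1$-curves hitting $\bar D_i$, so up to components supported on the exceptional locus over $\bar D_i$, they are indexed by the subset of exceptional divisors $\{E_{ij}\}$ they pass through. Each single $E_{ij}$ contributes a basic $(-1)$-curve in class $\varphi(n_i)-[E_{ij}]$ touching $D_i$ once, yielding the factor $1+z^{(n_i,\varphi(n_i)-[E_{ij}])}$; higher contact orders and multi-exceptional configurations assemble multiplicatively, giving the finite product in $\bar{\D}_0$.

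\textbf{Main obstacle.} The most delicate step is (iii): matching the all-order generating series of relative invariants on each incoming ray with the explicit finite product indexed by the exceptional divisors. This is the content of a GPS-type enumerative identity (cf.\ Theorem 5.4 of \cite{GPS}), and verifying it directly requires controlling which $A^1$-curves contribute at each contact order, their virtual multiplicities, and the precise way those multiplicities combine multiplicatively under the blow-up structure of $p$. Once this enumeration is in place, uniqueness of the consistent completion closes the proof.
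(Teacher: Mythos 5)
This theorem is quoted in the paper from \cite{G-H-K_I} without proof, so the relevant comparison is with the argument in that reference. Your strategy --- verify that $\nu(\D^{\can})$ is consistent, that its incoming walls and wall-functions agree with those of $\bar{\D}_0$, and then invoke uniqueness of the consistent completion --- is sound in outline, but step (i) hides a circularity that is the real gap. Consistency of $\D^{\can}$ is \emph{not} ``by construction'': in \cite{G-H-K_I} the canonical scattering diagram is defined by the relative Gromov--Witten numbers $N_\beta$ of $A^1$-curves, and its consistency is a theorem whose proof is precisely the identification $\nu(\D^{\can})=\mathrm{Scatter}(\bar{\D}_0)$ that you are trying to establish (the right-hand side being consistent by the Kontsevich--Soibelman construction of Theorem \ref{thm:cons-scatter}). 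Your characterization of $\D^{\can}$ as ``the diagram whose broken-line counts make the theta functions an associative algebra'' inverts the logic: the theta functions and their structure constants are defined using $\D^{\can}$, and associativity is a consequence of consistency, not a definition one can appeal to. As written, (i) assumes the hard part.

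Relatedly, you locate the main difficulty in step (iii), the matching of the \emph{incoming} wall-functions; but that part is comparatively easy --- it is the multiple-cover formula $N_{k[E_{ij}]}=(-1)^{k+1}/k^{2}$ for the interior $(-1)$-curves $E_{ij}$, which exponentiates to the factor $1+z^{(n_i,\varphi(n_i)-[E_{ij}])}$. The genuine content of the theorem is the identification of the \emph{outgoing} wall-functions of $\mathrm{Scatter}(\bar{\D}_0)$, produced formally by the Kontsevich--Soibelman algorithm, with the generating series of relative invariants of $(Y,D)$ defining $\D^{\can}$; this is the main theorem of \cite{GPS} and is what the proof in Subsection 3.4 of \cite{G-H-K_I} actually invokes. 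Your uniqueness argument would let you bypass this only if consistency of $\nu(\D^{\can})$ were available independently, which it is not in this framework. (Also, the claim that the contributing $A^1$-curves push forward to torus-invariant curves on $\bar{Y}$ is false in general; it is not needed for the incoming walls and would not suffice for the outgoing ones.)
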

\begin{remark}
In Subsection 3.3 of \cite{G-H-K_I}, each incoming wall in $\bar{\D}_0$ is supported on $R_{\geq 0}\cdot n_i$ instead. However, the discrepancy here is artificial. Up to the splitting of outgoing rays, there is an outgoing ray supported on $R_{\geq 0} \cdot  (-n_i)$ with the same wall-crossing function $f_{\mathfrak{d}_i}$ attached. What we did here is simply merge the two walls with the same wall-crossing function attached into a single line as one incoming wall instead. 
\end{remark}

Moreover, it is proved in Subsection 3.4 of \cite{G-H-K_I} that $\bar{\D}$ agrees with the scattering diagrams constructed  in \cite{GPS} for Looijenga pairs. Because of Theorem \ref{thm:conn-GPS}, we will use $\bar{\D}$ in this paper since it adapts to our purpose more immediately and produces the same mirror algebra as $\D^{\can}$.

More precisely, we will use a perturbed version of $\bar{\D}$. For each initial wall
\[
(\mathfrak{d}_i, f_{\mathfrak{d}_i})=(\R \cdot n_i, \prod^{l_i}_{j=1}(1+ z^{(n_i,\varphi(n_i) - [E_{ij}])})
\]
in $\bar{\D}_0$, we perturb it into $l_i$ parallel disjoint lines $\{\mathfrak{d}_{ij}\}_{j=1}^{l_i}$ to the right of the original initial wall. Here, to the right means these parallel lines are on the right of the ray $\R_{\geq 0}\cdot n_i$. These lines will have the same slope as $\mathfrak
{d}_i$ but will no longer pass through the origin. Moreover, on $\mathfrak{d}_{ij}$, the attached wall-crossing function is $f_{\mathfrak{d}_{ij}} =  z^{(n_i,\varphi(n_i) - [E_{ij}])}$. So, we can view $\{ (\mathfrak{d}_{ij}, f_{\mathfrak{d}_{ij}} \}_{j=1}^{l_i}$ as a factorization of the initial wall $(\mathfrak{d}_i, f_{\mathfrak{d}_i})$. Denote by $\bar{\D}'_0$ the collection $\{ (\mathfrak{d}_{ij}, f_{\mathfrak{d}_{ij}})\}_{i,j}$ After perturbing the initial walls in $\bar{\D}_0$ in this fashion, we can get a consistent scattering diagram $\bar{\D}' \supset \bar{\D}'_0$ by adding outgoing rays. Here, consistency means for each path that does not pass through the boundary of a wall or the intersection of two walls, the associated path-ordered product is the identity. We can interpret $\bar{\D}$ as the scattering diagram we get if we view $\bar{\D}'$ `from a great distance'. As a result, $\bar{\D}$ is also called the \emph{asymptotic scattering diagram} in \cite{GPS}. For more details, see Subsection 1.4 of \cite{GPS}.

The reason we want to use the perturbed version of $\bar{\D}$ is that in general, $\bar{\D}$ does not contain a chamber. For example, the scattering diagram for the cubic surface with $\mathbb{P}^2$ together with the toric boundary as the toric model has a ray for every rational slope. The advantage of the perturbed scattering diagram $\bar{\D}'$ is that it always contains a \emph{special chamber} (possibly unbounded) $\sigma_0$ containing the origin $0 \in N_{\R}$. This chamber corresponds to a toric chart on each smooth fiber of the mirror family $\cX \mid_{\Pic(Y)} \rightarrow T_{\Pic(Y)}$. And the expansion of the superpotential in this chamber will be Laurent polynomials, as we will see later. This suits our computational purpose.

\subsection{Broken lines}
We briefly recall the definition of broken lines and theta functions, adapted to the perturbed scattering diagram $\bar{\D}'$.
\begin{definition}
A broken line $\gamma$ on $N_{\R}$ for $q \in N\setminus\{0\}$ with a general endpoint $S$ that is not contained in the support of any wall in $\bar{\D}'$ is a proper continuous piecewise integral affine map $\gamma: (-\infty,0] \rightarrow N_{\R}$ with only a finite number of domains of linearity, together with, for each $L \subset (-\infty,0]$ a maximal connected domain of linearity of $\gamma$, a choice of monomial $n_L = c_L z^{q_L}$ where $c_L \in \mathbb{C}$ and $q_L \in Q$ satisfying the following properties:
\begin{enumerate}
    \item For the unique unbounded domain of linearity $L$, $n_L = z^{\varphi(q)}$. 
    \item For each $L$ and $t \in L$, $-r(q_L) = \gamma'(t)$ where $r: Q\rightarrow N$ is the projection map. Also, $\gamma(0) = S \in N_{\R}$.
    \item  Let $t \in (-\infty, 0)$ be a point at which $\gamma$  is not linear, passing from domain of linearity $L$ to $L'$. Let $\mathfrak{d}_1, \cdots, \mathfrak{d}_p \in \bar{\D}'$ be the walls of $\bar{\D}'$ that contain $\gamma(t)$, with attached function $f_{\mathfrak{d}_j}$. Then, we require that $\gamma$ passes from one side of walls to the other at time $t$, so that $\theta_{\gamma,\mathfrak{d}_j}$ is defined. Let $m=m_{\mathfrak{d}_j}$ be the primitive element of $M$ used to define $\theta_{\gamma, \mathfrak{d}_j}$. Expand
\[
\prod^{p}_{j=1} f_{\mathfrak{d}_j}^{\langle m, r(q_L) \rangle}
\]
as formal power series in $\widehat{\C[Q]}$. Then, there is a term $cz^s$ in this sum with 
\[
n_{L'} = n_{L} \cdot (cz^s).
\]
\end{enumerate}
\end{definition}

\begin{definition}
Given $q \in N\setminus\{0\}$ and any generic point $S \in N_{\R}$ not contained in the support of any wall in $\bar{\D}'$, we define the \emph{expansion of the theta function at $Q$, $\vartheta_{q,S}$ }, to be 
\[
\vartheta_{q,S} := \sum_{\gamma} \mathrm{Mono}(\gamma)
\]
where the sum is over all broken lines $\gamma$ for $q$ with endpoint $S$ and $\mathrm{Mono}(\gamma)$ denotes the monomial attached to the last domain of linearity of $\gamma$. 
For $0\in N$, $\vartheta_0 =1$ at any generic point $S \in N_{\R}$.
\end{definition}

\begin{remark}
For $q\in N\setminus\{0\}$,  the expansion of the theta function $\vartheta_q$ at a generic point $S$ depends on $S$. However, the algebraic relations (in the positive cases) of these theta functions are globally defined, i.e., independent of $S$. Thus, we have the well-defined mirror algebra $A$ in the positive cases. 
\end{remark}

\subsection{The action of the relative torus on the mirror family}
\begin{definition}\label{def:relative-torus}
    Let $\mathbb{A}^{D} = \mathbb{A}^{n}$ be the affine space with one coordinate for each component $D_i$ of $D$. The relative torus $T^D$ is the diagonal torus acting on $\mathbb{A}^{n}$ whose character group is the free module with basis $e_{D_1},...,e_{D_n}$. 

\end{definition}

\begin{definition}\label{def:weight-map}
There are two types of weights that will come into play associated with the action of $T^{D}$ on the mirror algebra $A$. One comes from the map $w:A_1(Y)\rightarrow \chi(T^D)$ given by
\[ C\mapsto \sum_i (C\cdot D_i) e_{D_i}\]
and the other comes from $w:N_{\R}\rightarrow \chi(T^D)\otimes \R$, where $w$ is the unique piecewise linear map with $w(0)=0$ and $w(n_i)=e_{D_i}$ for $n_i$ the primitive generator of $\rho_{i}$. Together, we get a weight map 
\begin{align*}
w: N \times \mathrm{NE}(Y) & \rightarrow \chi(T^{D}), \\
(q,C) & \mapsto w(q) + w(C).
\end{align*}
\end{definition}

The following theorem is a special case of Theorem 5.2 in \cite{G-H-K_I} when $(Y,D)$ is positive:
\begin{theorem}[Theorem 5.2, \cite{G-H-K_I}] \label{thm:rel-equi}The relative torus $T^{D}$ acts equivariantly on the mirror family $\mathcal{X}=\mathrm {Spec}(A) \rightarrow \mathrm{Spec}(\mathbb{C}[\mathrm{NE}(Y)])$. Furthermore, each theta function $\theta_{q}$, $q \in N$, is an eigenfunction of the action of $T^{D}$ with character $w(q)$. 
\end{theorem}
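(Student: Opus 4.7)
The plan is to exhibit a $\chi(T^D)$-grading on the mirror algebra $A$ under which $\vartheta_q$ is homogeneous of weight $w(q)$ and $z^C$ is homogeneous of weight $w(C)$ for $C\in \NE(Y)$. Once such a grading is in place, $T^D$ acts on $\cX = \Spec A$, and the action is automatically equivariant over $\Spec\C[\NE(Y)]$ because the subring $\C[\NE(Y)]\hookrightarrow A$ inherits the grading compatibly with the weight map of Definition~\ref{def:weight-map}. The grading on $A$ will descend from a grading on the ambient ring $\widehat{\C[Q]}$ used to construct the scattering diagram.

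First I would promote $w$ to a genuine $\chi(T^D)$-valued linear function $\widetilde w \colon Q^{\mathrm{gp}}=N\oplus A_1(Y,\Z)\to \chi(T^D)$ by setting $\widetilde w(n,s) := (w(n)-w(\varphi(n))) + w(s)$. Although $w|_N$ is only piecewise linear, the combination $\ell(n):=w(n)-w(\varphi(n))$ is linear on all of $N$: its bending parameter across $\rho_i$ equals $\kappa_{\rho_i,w}-w(\kappa_{\rho_i,\varphi})$, and using the smooth-toric identity $n_{i-1}+\bar{D}_i^2 n_i+n_{i+1}=0$ one computes $\kappa_{\rho_i,w}=e_{D_{i-1}}+\bar{D}_i^2 e_{D_i}+e_{D_{i+1}}$, which by the projection formula agrees with $w([p^{*}(\bar{D}_i)])=w(\kappa_{\rho_i,\varphi})$. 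So the two bendings cancel and $\widetilde w$ defines a bona fide $\chi(T^D)$-grading on $\C[Q]$ that extends to $\widehat{\C[Q]}$.

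Next I would verify that every wall function in the asymptotic scattering diagram $\bar{\mathfrak{D}}$ is homogeneous of weight zero under $\widetilde w$. For the initial walls in $\bar{\mathfrak{D}}_0$ the generators $z^{(n_i,\varphi(n_i)-[E_{ij}])}$ have weight $\ell(n_i)+w(\varphi(n_i))-w(E_{ij})=w(n_i)-e_{D_i}=0$, since $E_{ij}$ is an exceptional curve over a point of $\bar{D}_i$ and hence meets only $D_i$, once and transversely. For the outgoing rays added by the Kontsevich-Soibelman procedure of Theorem~\ref{thm:cons-scatter}, one proceeds by induction on the order modulo $\mathfrak{m}_Q^k$: the consistency equation $\theta_{\gamma,\bar{\mathfrak{D}}}=\mathrm{id}$ decomposes along $\widetilde w$-weights, so any discrepancy at order $k$ is itself weight zero, forcing the compensating outgoing wall function to be weight zero as well.

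Given these two ingredients, the theorem falls out. The initial broken-line monomial $z^{(q,\varphi(q))}$ has weight $\widetilde w(q,\varphi(q))=\ell(q)+w(\varphi(q))=w(q)$, and each subsequent wall crossing multiplies by a factor drawn from $f_{\mathfrak{d}}^{\langle m_{\mathfrak{d}},r(q_L)\rangle}$, which is weight zero. Hence every monomial in $\vartheta_q=\sum_\gamma \mathrm{Mono}(\gamma)$ carries weight $w(q)$, so the multiplication rule $\vartheta_q\vartheta_{q'}=\sum_{r,C}\alpha_{qq'}^{r,C}\,z^C\vartheta_r$ is automatically $\widetilde w$-homogeneous and $A$ becomes a $\chi(T^D)$-graded $\C[\NE(Y)]$-algebra in which $\vartheta_q$ is an eigenfunction of character $w(q)$. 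I expect the main obstacle to be the propagation of the weight-zero property through scattering in the third step above, since it requires an order-by-order analysis of the Kontsevich-Soibelman construction and invokes the essential uniqueness (up to factorization of rays) of the outgoing walls in Theorem~\ref{thm:cons-scatter}.
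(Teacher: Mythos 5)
This statement is quoted in the paper as Theorem 5.2 of \cite{G-H-K_I} (specialized to the positive case) and the paper supplies no proof of its own, so there is nothing internal to compare your argument against; I will instead assess it on its merits. Your reconstruction is sound and is essentially the standard equivariance argument for scattering diagrams. The three key computations all check out: (i) the linearity of $\ell(n)=w(n)-w(\varphi(n))$, since the kink of $w$ along $\rho_i$ is $e_{D_{i-1}}+\bar{D}_i^2e_{D_i}+e_{D_{i+1}}$ while the kink of $w\circ\varphi$ is $w([p^*\bar{D}_i])=\sum_j(\bar{D}_i\cdot\bar{D}_j)e_{D_j}$ by the projection formula, and these coincide; (ii) the weight of the initial wall monomials, $\widetilde w(n_i,\varphi(n_i)-[E_{ij}])=w(n_i)-w([E_{ij}])=e_{D_i}-e_{D_i}=0$, using that $E_{ij}$ meets $D$ only in $D_i$, transversally; (iii) the inductive propagation of weight-zero through the Kontsevich--Soibelman construction, which works because a wall with weight-zero function induces a $\widetilde w$-graded automorphism of $\widehat{\C[Q]}$, so the order-$k$ consistency defect is graded and its logarithm decomposes into weight-zero derivations. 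From there the conclusion that every $\mathrm{Mono}(\gamma)$ for a broken line of asymptotic direction $q$ has weight $w(q)$, hence that the structure constants respect the grading, is immediate.

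For context: the proof in \cite{G-H-K_I} itself runs through the enumerative definition of the structure constants, deducing the weight constraint from the degree condition $\beta\cdot D_i$ imposed on the curve classes contributing to $\vartheta_{q}\vartheta_{q'}$; your scattering-diagram route is equivalent in view of Theorem \ref{thm:conn-GPS} and has the advantage of being checkable order by order without invoking the geometric meaning of the coefficients. Two minor points worth making explicit if you write this up: the grading argument shows homogeneity of the local expansions $\vartheta_{q,S}$ at each generic basepoint $S$, and one should note that the globally defined structure constants are read off from any such expansion; and the identity $n_{i-1}+\bar{D}_i^2n_i+n_{i+1}=0$ requires $n\ge 3$, which holds here since $(Y,D)$ admits a toric model.
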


\section{Superpotentials and their tropicalization}

\label{sec:super}

Now, for the rest of this paper, we will  restrict to Looijenga pairs $(Y,D)$ such that $Y$ is a \emph{del Pezzo surface} of degree $  
\geq 3$. Moreover, we assume that $(Y,D)$ has a toric model $p:(Y,D) \rightarrow (\bar{Y}, \bar{D})$.

\subsection{The superpotential}

\label{sec:super_superpotential}

Denote by $\cA(Y) \subset H^2(Y,\R)$ the ample cone. 

Let $\omega \in H_2(Y,\R)$ be an element of the form $\omega = \omega_b + \omega_e$ such that 
\begin{align}
    & \omega_b = \sum_i \lambda_i D_i,\quad \lambda_i >0  \,\mathrm{for \, each}\, i; \label{eq:ample-con-1}\\ 
    &\omega_b  \,\text{is ample }; \label{eq:ample-con-2}\\ 
    &\omega_e = \sum_{i,j}c_{ij}E_{ij}, \quad c_{ij} \neq c_{ij'}  \,\text{for each $i$ and $j\neq j'$}; \label{eq:ample-con-3}\\
      & 0  < c_{ij} < \lambda_i \,\text{for each } i \,\text{and}\, j; \label{eq:ample-con-4}\\
      & \omega  \cdot D_i > \lambda_{i+1} - \min \{c_{i+1,1}, \cdots c_{i+1, l_{i+1}}. \} \label{eq:ample-con-5}
\end{align}
\begin{remark}
The conditions \ref{eq:ample-con-3}-\ref{eq:ample-con-5} are vacuous for toric del Pezzo surfaces.   
\end{remark}

\begin{lemma}
$\omega$ is contained in the ample cone $\mathcal{A}(Y)$.
\end{lemma}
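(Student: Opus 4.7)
The plan is to verify Nakai--Moishezon's criterion: show that $\omega^2>0$ and $\omega\cdot C>0$ for every irreducible curve $C\subset Y$. My first task is to pin down the intersection pattern of the $E_{ij}$ using the del Pezzo hypothesis. Since $-K_Y$ is ample on $Y$, every smooth rational curve $C$ satisfies $C^2=-2-C\cdot K_Y>-2$, so $Y$ has no $(-2)$-curves; the toric model $p:(Y,D)\to(\bar Y,\bar D)$ therefore cannot involve any infinitely near points, and each $E_{ij}$ is a $(-1)$-curve. Moreover, the isomorphism $D\to\bar D$ forces the blown-up points to lie at smooth points of the individual components $\bar D_i$, so $E_{ij}$ meets $D_i$ transversally exactly once and is disjoint from every $D_k$ with $k\neq i$ and from every $E_{kl}\neq E_{ij}$. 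Thus $E_{ij}\cdot E_{kl}=-\delta_{(i,j),(k,l)}$ and $E_{ij}\cdot D_k=\delta_{ik}$.

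Next, I would verify strict positivity case by case. Using the intersection numbers above,
\[
\omega\cdot E_{ij}=\omega_b\cdot E_{ij}+\omega_e\cdot E_{ij}=\lambda_i-c_{ij}>0
\]
by \eqref{eq:ample-con-4}. For the boundary components, \eqref{eq:ample-con-5} directly gives $\omega\cdot D_i>\lambda_{i+1}-\min_j c_{i+1,j}$, and that lower bound is strictly positive once again by \eqref{eq:ample-con-4}. If $C$ is an irreducible curve distinct from every $D_i$ and every $E_{ij}$ (for instance a $(-1)$-curve over an interior point of $\bar Y$, or the proper transform of some irreducible curve on $\bar Y$), then $C\cdot D_i\geq 0$ and $C\cdot E_{ij}\geq 0$ for all $i,j$; hence $\omega_e\cdot C\geq 0$, and combined with the ampleness \eqref{eq:ample-con-2} of $\omega_b$ this yields $\omega\cdot C\geq\omega_b\cdot C>0$.

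To finish, since $\omega=\sum_i\lambda_i D_i+\sum_{i,j}c_{ij}E_{ij}$ is an effective $\mathbb{R}$-divisor with strictly positive coefficients, the bounds just obtained imply
\[
\omega^2=\sum_i\lambda_i(\omega\cdot D_i)+\sum_{i,j}c_{ij}(\omega\cdot E_{ij})>0,
\]
and Nakai--Moishezon then gives that $\omega$ is ample. The only genuinely delicate step is the first one: one must know that the configuration of exceptional curves is as simple as described. The del Pezzo condition of degree $\geq 3$ is exactly what rules out infinitely near blow-ups and pins down the intersection numbers; after that, the proof is a direct substitution into the defining inequalities \eqref{eq:ample-con-1}--\eqref{eq:ample-con-5}.
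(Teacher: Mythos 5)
Your proof is correct, but it takes a genuinely different route from the paper. You verify ampleness directly via the (Campana--Peternell, $\mathbb{R}$-divisor) Nakai--Moishezon criterion: the del Pezzo hypothesis rules out $(-2)$-curves and hence infinitely near blow-ups, which pins down the intersection matrix of the $E_{ij}$ and $D_k$; you then check $\omega\cdot C>0$ curve by curve and deduce $\omega^2>0$ from effectivity. The paper instead works with the Gross--Hacking--Keel cone formalism: it places $\omega$ in $C^{+}$, checks $\omega\cdot E\ge 0$ for every class $E\in\tilde{\mathcal M}$ (classifying such classes as boundary components, $E_{ij}$'s, or $p^{*}(\beta)-\sum p_{ij}E_{ij}$), verifies $\omega\cdot D_i>0$, and then invokes Lemma 2.15 of \cite{G-H-K-_15}, which identifies the ample cone with the interior of $C^{++}$ for generic pairs. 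Your approach is more self-contained (no appeal to the GHK lemma) and handles the classes $p^{*}(\beta)-\sum p_{ij}E_{ij}$ uniformly, since any irreducible curve other than a $D_i$ or $E_{ij}$ meets all of them non-negatively and meets the ample $\omega_b$ positively; the price is that you need the $\mathbb{R}$-divisor form of Nakai--Moishezon, which you should cite. One small repair: your derivation of $\omega\cdot D_i>0$ from condition \eqref{eq:ample-con-5} breaks down when $l_{i+1}=0$ (the minimum is over an empty set and the condition is vacuous, as the paper notes for toric del Pezzos); the cleaner argument, and the one the paper uses, is simply $\omega\cdot D_i=\omega_b\cdot D_i+\sum_{j=1}^{l_i}c_{ij}>0$ by \eqref{eq:ample-con-2} and \eqref{eq:ample-con-4}, with no appeal to \eqref{eq:ample-con-5} at all.
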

\begin{proof}
First, by the conditions \ref{eq:ample-con-1} and \ref{eq:ample-con-4}, $\omega$ is in $C^{+}$. Second, since $Y$ is a del Pezzo surface, $\tilde{\mathcal{M}}$ is the set of exceptional curves on $Y$. Then, a curve class in $\tilde{\mathcal{M}}$ is either a boundary component of $D$, or one of the $[E_{ij}]$'s, or of the form $p^{*}(\beta) - \sum_{i,j} p_{ij}E_{ij} (p_{ij}\geq 0)$ for each $i$ and $j$ where $\beta$ is an effective toric divisor on $\bar{Y}$. In either case, the conditions \ref{eq:ample-con-2}, \ref{eq:ample-con-3} and \ref{eq:ample-con-4} guarantee that $\omega$ is contained in the interior of $C^{++}$. 

Furthermore, by the conditions \ref{eq:ample-con-2} and \ref{eq:ample-con-3}, we know that $\omega \cdot D_i>0$ for each $i$. Thus, $\omega$ is contained in the interior of $C^{++}_{D}$.  Finally, since $Y$ is del Pezzo and in particular contains no internal (-2)-curves, $\omega$ is in $\mathcal{A}(Y)$ by Lemma 2.15 of \cite{G-H-K-_15}.  
\end{proof}

For the rest of this paper, we will fix an element $\omega$ satisfying the conditions \ref{eq:ample-con-1} - \ref{eq:ample-con-5}. In particular, $\omega$ is ample by the above lemma. Later, we will see that the conditions \ref{eq:ample-con-3} - \ref{eq:ample-con-5} guarantee that the tropicalization of the superpotential will be `nice', whose meaning will be clarified.

The class $\omega_e$ gives rise to a map 
\begin{align*}
   \iota_e: \R_{+} &\rightarrow T_{\Pic(Y)}\\
    t & \mapsto (t^{L_1 \cdot \omega_e}, \,t^{L_2 \cdot \omega_e},\, \cdots t^{L_r \cdot \omega_e})
\end{align*}
where $L_1, L_2, \cdots L_r$ is a fixed basis of $A_1(Y)$. 

Similarly, $\omega$ gives rise to a map $\iota_{\omega}: \R^{+} \rightarrow T_{\Pic(Y)}$ via $t \mapsto (t^{L_1 \cdot \omega}, \, \cdots t^{L_r \cdot \omega})$.

Notice that $\omega_b$ gives rise a map
\begin{align*}
    g_b: \R^{+} & \rightarrow T^D \\
    & t \mapsto (t^{\lambda_1}, \cdots, t^{\lambda_n}).
\end{align*}

Denote by $\vartheta_{i,t}$ the restriction of $\vartheta_i$ to the fiber $\cX_{\iota_e(t)}$. Let
\begin{equation}\label{eq:superpo}
    W_t = \sum_i t^{\lambda_i} \vartheta_{i,t}.
\end{equation}

The lemma below follows immediately from the definition of the action of $T^{D}$ and Theorem \ref{thm:rel-equi}.
\begin{lemma}
We have $g_b(t) \cdot \iota_e(t) = \iota_{\omega}(t)$
and $W_t$ is the superpotential on the fiber $\cX_{\iota_{\omega}(t)}$.
\end{lemma}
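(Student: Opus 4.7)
The plan is to verify the two assertions separately; each reduces to a short character computation once the conventions of the $T^D$-action are pinned down.

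For the equality $g_b(t)\cdot \iota_e(t) = \iota_\omega(t)$, the character lattice of $T_{\Pic(Y)}$ is $A_1(Y)$, so a point $p\in T_{\Pic(Y)}$ corresponds to the homomorphism $C\mapsto z^C(p)$. From the construction of $\iota_e$ and $\iota_\omega$, $z^C(\iota_e(t)) = t^{C\cdot \omega_e}$ and $z^C(\iota_\omega(t)) = t^{C\cdot \omega}$ for every $C\in A_1(Y)$. By Theorem \ref{thm:rel-equi} together with Definition \ref{def:weight-map}, the induced $T^D$-action on the base is governed by the weight map $w\colon A_1(Y)\to \chi(T^D)$, in the sense that $z^C(g\cdot p) = w(C)(g)\cdot z^C(p)$. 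A direct computation gives
\[
w(C)(g_b(t)) = \prod_i t^{\lambda_i(C\cdot D_i)} = t^{C\cdot \omega_b},
\]
so $z^C(g_b(t)\cdot \iota_e(t)) = t^{C\cdot \omega_b}\cdot t^{C\cdot \omega_e} = t^{C\cdot\omega} = z^C(\iota_\omega(t))$, proving the first claim.

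For the second assertion, I would recall that the intrinsic superpotential of the GHK mirror family is the element $W = \sum_i \vartheta_i \in A$. By Theorem \ref{thm:rel-equi}, each $\vartheta_i = \vartheta_{n_i}$ is a $T^D$-eigenfunction of character $w(n_i) = e_{D_i}$. Writing $\alpha_g\colon \cX\to \cX$ for the action of $g\in T^D$, this says $\alpha_g^*\vartheta_i = e_{D_i}(g)\cdot \vartheta_i$. By the first part, $\alpha_{g_b(t)}$ restricts to an isomorphism $\cX_{\iota_e(t)}\xrightarrow{\sim} \cX_{\iota_\omega(t)}$, and pulling back $\vartheta_i|_{\cX_{\iota_\omega(t)}}$ along this isomorphism multiplies it by $e_{D_i}(g_b(t)) = t^{\lambda_i}$. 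Summing over $i$ yields
\[
\alpha_{g_b(t)}^*\bigl(W|_{\cX_{\iota_\omega(t)}}\bigr) = \sum_i t^{\lambda_i}\vartheta_{i,t} = W_t,
\]
identifying $W_t$ with the intrinsic superpotential on $\cX_{\iota_\omega(t)}$ under the isomorphism $\alpha_{g_b(t)}$.

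The only real obstacle is bookkeeping: one must pin down whether the $T^D$-action on the base uses $w(C)$ or its inverse, and correspondingly whether $\alpha_g^*\vartheta_i$ equals $e_{D_i}(g)\vartheta_i$ or its inverse. These two sign conventions must be chosen compatibly, but either can be fixed once by comparing a single monomial in the torus chart from the special chamber $\sigma_0$ against the explicit formulae given in the introductory example. Beyond this bookkeeping, both claims are immediate from Theorem \ref{thm:rel-equi}, matching the authors' assertion that the lemma follows directly from the definition of the $T^D$-action.
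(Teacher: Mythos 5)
Your proposal is correct and follows essentially the same route as the paper, which simply asserts that the lemma "follows immediately from the definition of the action of $T^D$ and Theorem \ref{thm:rel-equi}"; you have merely written out the character computation $w(C)(g_b(t)) = t^{C\cdot\omega_b}$ and the eigenfunction argument that the paper leaves implicit. Your closing remark about fixing the sign convention for the action is a reasonable piece of bookkeeping and does not indicate a gap.
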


The fibers $\cX_{\iota_e(t)}$ and $\cX_{\iota_{\omega}(t)}$ are isomorphic, since $T^{D}$ acts on the compactified mirror family by changing the marking of the boundaries only. Thus, $W_t$ can be viewed either as regular functions on $\cX_{\iota_e (t)}$ or $\cX_{\iota_{\omega}(t)}$. Our convention is that $\cX_t=\cX_{\iota_\omega(t)}$ and $W_t\vert_{\cX_t}= \sum_i t^{\lambda_i} \vartheta_{i,t},$ where $t^{\lambda_i} \vartheta_{i,t}$ is the theta function on $\cX_t$. 

\begin{example}
Consider the Looijenga pair of $\mathbb{P}^2$ together with its toric boundary.  Then, condition \ref{eq:ample-con-1} is sufficient for $\omega_b$ to be ample. The image of $\iota_e$ is the identity element of $T_{\Pic(\mathbb{P}^2)} \simeq \C^{*}$. The superpotential restricted to $1 \in T_{\Pic(\mathbb{P}^2})$ is equal to the standard one $x+y+x^{-1}y^{-1}$. The action of $T^{D} \simeq (\C^{*})^3 $ rescales the coefficients before the monomials $x$, $y$, and $x^{-1} y^{-1}$ and we have 
\[
W_t = t^{\lambda_1}x + t^{\lambda_2}y + t^{\lambda_3} x^{-1}y^{-1}.
\]
\end{example}

\subsection{The expansion of the superpotential in the special chamber}
Recall that we denote by $\sigma_0$ the special chamber in $\bar{\D}'$ containing the origin. Below is the first key lemma in this subsection. 

\begin{figure}
    \centering
    \includesvg[width = 0.3\textwidth]{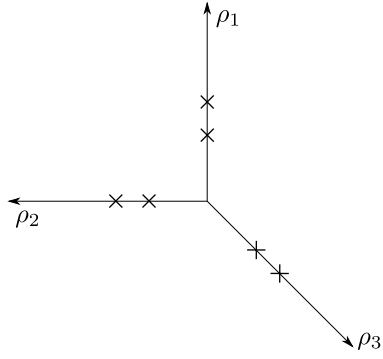}
    \caption{The toric model of $dP_3$}
    \label{fig:example_dP3_toric_model}
\end{figure}
\begin{lemma} \label{lem:expan-sup}
Suppose $(Y,D)$ is a Looijenga pair such that $Y$ is a del Pezzo surface of degree $\geq 3$ and $(Y,D)$ has a toric model. Fix a generic point $S$ in $\sigma_0$. Then, for each $i$, a broken line with the initial direction $n_i$ and the endpoint $S$ can only bend at the initial walls $\bar{\D}'_0$ of $\bar{\D}'$. Moreover, we can find a set of chambers in $\bar{\D}'$ including $\sigma_0$ such that these chambers are in 1-1 correspondence with toric charts in the toric blowup description of a smooth fiber in the mirror family. 
\end{lemma}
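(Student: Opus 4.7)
The strategy is a combinatorial analysis of the perturbed scattering diagram $\bar{\D}'$ that exploits the tight control on complexity provided by the hypothesis $\deg Y \geq 3$. I would first recall that the initial perturbed diagram $\bar{\D}'_0$ consists of finitely many parallel lines: for each toric ray $\rho_i$ there are $l_i$ parallel walls $\mathfrak{d}_{ij}$ perpendicular to $m_{\rho_i}$, each carrying the monomial wall function $z^{(n_i,\,\varphi(n_i)-[E_{ij}])}$. Since $Y$ is del Pezzo of degree $\geq 3$, after toric blowup the total number of exceptional divisors $\sum_i l_i$ is bounded by $9-\deg Y \leq 6$, so $\mathrm{Scatter}(\bar{\D}'_0)$ produces only finitely many outgoing rays whose positions can be enumerated explicitly.

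For the first assertion, I would track a broken line $\gamma$ with initial direction $n_i$. By the definition of a broken line, $\gamma$ comes in from infinity along $+n_i$ with initial monomial $z^{\varphi(n_i)}$, and each bending at a wall $\mathfrak{d}$ with function $f_\mathfrak{d}$ replaces the current monomial $z^{q_L}$ by a term in the expansion of $z^{q_L}f_\mathfrak{d}^{\langle m_\mathfrak{d},\,r(q_L)\rangle}$. The sign constraint $\langle m_\mathfrak{d}, \gamma'(t_0)\rangle < 0$ then forces the $N$-projection of the exponent to move in a specific transverse direction. I would argue that every \emph{outgoing} ray in $\bar{\D}'\setminus \bar{\D}'_0$ is emitted from a scattering center lying on the far side of $\sigma_0$ relative to the trajectory of $\gamma$, using the ampleness genericity conditions \ref{eq:ample-con-3}--\ref{eq:ample-con-5} together with the explicit list of scattering functions available via Theorem \ref{thm:conn-GPS}. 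As a consequence, $\gamma$ can only cross walls in $\bar{\D}'_0$, and among those the parallel walls $\mathfrak{d}_{ik}$ in direction $n_i$ are traversed tangentially (so carry no bending), leaving exactly the parallel walls $\mathfrak{d}_{jk}$ with $j \neq i$ as possible bending sites.

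For the second assertion, I would build the desired chambers iteratively. Each ordered collection $\{E_{ij}\}_{j=1}^{l_i}$ of exceptional divisors over $\bar{D}_i$ corresponds to a sequence of point-blowups of $\bar{Y}$ on the toric boundary, whose fan refines $\Sigma$ by inserting new rays between $\rho_i$ and $\rho_{i+1}$. The maximal cones of this refined fan match, one-to-one, the chambers of $\bar{\D}'$ cut out by the parallel families $\{\mathfrak{d}_{ij}\}$ in a neighbourhood of the relevant scattering center; in each such chamber, the restriction of every theta function $\vartheta_q$ is a single Laurent monomial in two toric coordinates, identifying the chamber with the toric chart $U_\sigma \simeq (\C^*)^2$ of the associated smooth fiber of the mirror family.

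The main obstacle is the position analysis of outgoing rays in the first part: ruling out that any outgoing ray of $\bar{\D}'$ obstructs a broken line ending in $\sigma_0$. I expect to resolve this by a direct case check over the short list of toric models of del Pezzo surfaces of degree $\geq 3$, combined with the finite truncation of the scattering diagram afforded by the bounded count of exceptional curves and the genericity in the choice of $\omega$.
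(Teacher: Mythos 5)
There is a genuine gap, and it occurs at the foundation of your argument. You claim that because $\sum_i l_i\le 9-\deg Y\le 6$, the consistent completion $\mathrm{Scatter}(\bar{\D}'_0)$ ``produces only finitely many outgoing rays whose positions can be enumerated explicitly.'' This is false: the number of initial walls does not control the number of outgoing rays. The paper itself remarks that for the cubic surface with toric model $\mathbb{P}^2$ the asymptotic diagram $\bar{\D}$ has a ray of every rational slope, and its proof of this very lemma records (via Example 1.6 of \cite{GPS}) that already the collision of the two families of initial walls parallel to $\rho_1$ and $\rho_2$ produces the infinite families of outgoing rays $\R_{\geq 0}\cdot(n,-n-1)$, $\R_{\geq 0}\cdot(1,-1)$ and $\R_{\geq 0}\cdot(n+1,-n)$ for all $n\ge 1$. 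Because of this, your mechanism for the first assertion---that every outgoing ray lies ``on the far side of $\sigma_0$'' so that a broken line ending at $S$ never meets one---cannot be correct: in the cubic case the broken line contributing to $\vartheta_2$ must traverse a region containing infinitely many outgoing rays before it can reach $\sigma_0$. What has to be proved is not that $\gamma$ avoids outgoing rays but that it cannot \emph{bend} at any of them: one tracks the velocity $-r(q_L)$ of $\gamma$ after its forced bendings at the initial walls and checks that a bend at any outgoing ray in the region it traverses either is incompatible with the sign condition $\langle m_{\mathfrak{d}},\gamma'\rangle<0$ or redirects $\gamma$ so that it can never re-enter $\sigma_0$. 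This directional/monomial analysis is the heart of the proof and is absent from your proposal.

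The same omission undermines the second assertion. You match would-be chambers with maximal cones of the refined fan, but you never explain why the (infinitely many) outgoing rays do not subdivide them. The needed input is again positional control of the scattered rays---for instance, that among the rays produced by the first collision only those of slope $(n,-n-1)$ reach the initial walls parallel to $\rho_3$, and that the rays emitted from those secondary collisions are confined to the angular sector between $(1,-1)$ and $(-1,0)$---which is exactly what leaves the designated chambers intact. Your proposed ``direct case check over the short list of toric models'' is the right sentiment (the paper reduces everything to the hardest case, the cubic surface with toric model $\mathbb{P}^2$), but as written the check is not carried out and rests on the false finiteness claim, so the proof is incomplete.
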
 

\begin{proof}
We prove the cubic surface case with a toric model of $\mathbb{P}^2$ together with its toric boundary. Adapting a subset of arguments we use to prove this particular case will enable us the prove  the rest  cases since this case is the most complicated one. So, proving this case will be enough. 

See Figure \ref{fig:example_dP3_scatter} for part of the scattering diagram $\bar{\D}'$. 
\begin{figure}
    \centering
    \includesvg[width = 0.3\textwidth]{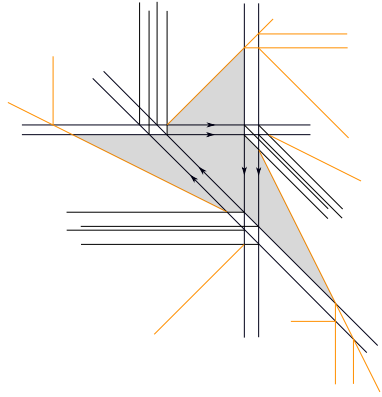}
    \caption{Part of the scattering diagram of $dP_3$. The initial walls and their first-order scatterings are in black.}
    \label{fig:example_dP3_scatter}
\end{figure}
Let us first prove the existence of chambers in the scattering diagram. We first look at the part of the scattering diagram that originated from the intersection of initial walls parallel to $\rho_{i-1}$ and $\rho_i\,(1\leq i \leq 3)$. It suffices to consider the case where $i=2$, as shown in Figure \ref{fig: scattering of two set of walls}. As shown in Example 1.6 of \cite{GPS}, all outgoing walls in this part of the scattering diagram are of one of the following forms:
\begin{enumerate}
    \item $\R_{\geq 0}\cdot(n, -n-1)$, \, $1+z^c x^n y^{-n-1},\,n \geq 1$.
    \item $\R_{\geq 0}\cdot (1,-1)$,\, $1-z^d xy$
    \item $\R_{\geq 0}\cdot (n+1, -n)$,\, $1+z^e x^{n+1}y^{-n},\, n \geq 1$.
\end{enumerate}
\begin{figure}
    \centering
    \includesvg[width = 0.22\textwidth]{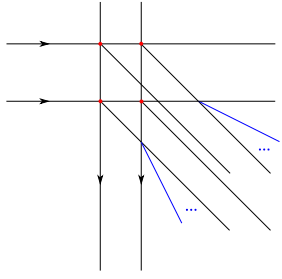}
    \caption{The scattering of initial walls parallel to $\rho_1$ and $\rho_2$}
    \label{fig: scattering of two set of walls}
\end{figure}
Here, $c,d$, and $e$ are elements in the monoid $\oplus_{i=1}^{2} (\mathbb{N}d_{i1}\oplus \mathbb{N}d_{i2})$ where $d_{ij}=\varphi(n_i)-[E_{ij}]$. As a result, only outgoing rays with slope $(n, -n-1)$ will intersect the two initial walls parallel to ray $\rho_3$. After these outgoing rays intersect the initial walls parallel to $\rho_3$, the outgoing walls emanating from these intersections will all have a slope between $(1,-1)$ and $(-1,0)$ (rotating clockwise). Then, it is clear to see that in $\bar{\D}'$, we have the desired set of chambers as shaded in gray in \ref{fig:example_dP3_scatter}. 

Next, let us prove the remaining part of the lemma regarding broken lines contributing to the expansion of the superpotential at $S$. Due to the symmetry of the scattering diagram, it suffices to prove that the statement holds for the expansion of $\vartheta_2$ at $S$. 

Let $\gamma$ be a broken line whose initial direction is $n_2$ and ending at $S$. First, if $\gamma$ bends, it is straightforward to see that the first bending of $\gamma$ has to occur at one of the initial walls parallel to $\rho_3$. Moreover, $\gamma$ has to pass through both initial walls parallel to $\rho_3$ and cannot bend at any other walls in between. After crossing initial walls parallel to $\rho_3$, if $\gamma$ has not entered $\sigma_0$, then $\gamma$ has to enter the region of the scattering diagram as shaded in green in Figure \ref{fig:example_dP3_scatter_2}. If $\gamma$ bends only at one of the initial walls parallel to $\rho_3$, the direction of $\gamma$ immediately after this bending will be $-n_1$ and $\gamma$ cannot enter $\sigma_0$ later. So, $\gamma$ has to bend at both of the initial walls parallel to $\rho_3$ before entering the region shaded in green with direction $-n_1 + 2n_2$. Then, it is straightforward to see that $\gamma$ cannot bend at any outgoing ray in the green region, and neither can it pass through the green region and enter $\sigma_0$ later. So, after entering the green region, $\gamma$ can only bend at the initial walls parallel to $\rho_1$ and then enter $\sigma_0$. So, the first statement lemma is also proved for this case. 
\end{proof}

\begin{figure}
    \centering
    \includesvg[width = 0.3\textwidth]{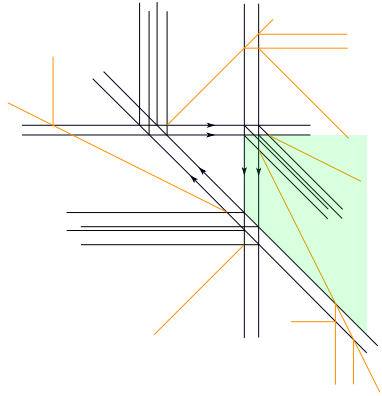}
    \caption{The green region that will trap $\gamma$}
    \label{fig:example_dP3_scatter_2}
\end{figure}

\begin{corollary}
The expansion of $W_t$ at a generic point in the special chamber $\sigma_0$ is a Laurent polynomial such that each of the monomials has a coefficient that is $t$ to a positive real power. 
\end{corollary}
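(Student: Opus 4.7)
The plan is to combine Lemma \ref{lem:expan-sup} with a direct tracking of the monomial that each broken line carries. By the lemma, a broken line $\gamma$ contributing to the expansion of $\vartheta_{n_i}$ at a generic $S\in\sigma_0$ may bend only at the initial walls of $\bar{\D}'_0$, of which there are only $\sum_{i=1}^n l_i<\infty$.

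First I would argue finiteness of the admissible broken lines. The incoming segment of $\gamma$ has slope $n_i$, and each bending at $\mathfrak{d}_{j_mk_m}$ with multiplicity $l_m\geq 1$ adds $l_m n_{j_m}$ to the direction vector, with $l_m$ itself bounded by $\langle m_{\mathfrak{d}},r(q_L)\rangle$. Since $\gamma$ must end at a fixed $S\in\sigma_0$, its accumulated direction at each stage is constrained by which walls remain to be crossed. For del Pezzo of degree $\geq 3$ the toric model $\bar Y$ has a small fan, and the combinatorial case analysis in the proof of Lemma \ref{lem:expan-sup} (done in detail for the hardest $dP_3$ case) bounds both the number of bendings and their multiplicities. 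Therefore $\vartheta_{i,t}$ restricted to the $\sigma_0$-chart is a finite Laurent polynomial in the torus coordinates, and so is $W_t$.

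Next I would compute the $t$-exponent of each contributing monomial. The initial segment carries $z^{(n_i,\varphi(n_i))}$, and each subsequent bending at $\mathfrak{d}_{j_m k_m}$ with multiplicity $l_m$ multiplies the monomial by $z^{l_m\cdot(n_{j_m},\varphi(n_{j_m})-[E_{j_mk_m}])}$. The accumulated $A_1(Y,\Z)$-part of the exponent is therefore
\[
s = \varphi(n_i)+\sum_m l_m\bigl(\varphi(n_{j_m})-[E_{j_mk_m}]\bigr).
\]
Restriction to $\cX_{\iota_e(t)}$ sends $z^C\mapsto t^{\omega_e\cdot C}$. Because $\varphi(n)\in p^*A_1(\bar Y,\Z)$ and $p_*\omega_e=0$, the projection formula gives $\omega_e\cdot\varphi(n)=0$ for every $n\in N$. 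Together with $\omega_e\cdot[E_{jk}]=-c_{jk}$ (using $E_{jk}^2=-1$), this yields $\omega_e\cdot s=\sum_m l_m c_{j_mk_m}$. After multiplying by the prefactor $t^{\lambda_i}$, the coefficient of the monomial in $W_t$ equals
\[
t^{\lambda_i+\sum_m l_m c_{j_mk_m}},
\]
whose exponent is strictly positive by conditions \ref{eq:ample-con-1} and \ref{eq:ample-con-4}.

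The hard part will be the finiteness step: a priori a broken line could bend at many walls in complicated patterns, producing infinitely many admissible paths. The restriction to del Pezzo of degree $\geq 3$ controls the fan of $\bar Y$ enough that the combinatorial analysis in Lemma \ref{lem:expan-sup} applies, and the other (simpler) cases should be handled by direct adaptations. Once finiteness is in hand, the positivity step is bookkeeping that rests on $E_{jk}^2=-1$ and the standing sign conditions on $\omega$.
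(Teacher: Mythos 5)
Your proposal is correct and follows the same route as the paper, whose proof is a one-line appeal to Lemma \ref{lem:expan-sup} together with the positivity conditions on $\omega$; you have simply filled in the finiteness and exponent-bookkeeping details that the paper leaves implicit. One small point: the positivity of the exponents uses condition \ref{eq:ample-con-4} (which gives $c_{ij}>0$) as you cite it, rather than condition \ref{eq:ample-con-3} as the paper states, but this is consistent with the explicit formula \eqref{eq:truncated_theta_function_on_special_chamber} and is at most a typo in the paper.
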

\begin{proof}
The corollary follows immediately from Lemma \ref{lem:expan-sup} and conditions \ref{eq:ample-con-1} and \ref{eq:ample-con-3}.
\end{proof}

\begin{remark}
One can check that for each del Pezzo surface $(Y,D)$ of degree $\geq 3$,  $D$ can be chosen so that the expansion of the superpotential at $\sigma_0$ restricted to the fiber over $1 \in T_{\Pic(Y)}$  agrees with the standard one listed in, say, Table 1 of \cite{P-T_19}.
\end{remark}

\subsection{Tropicalization and amoebas}
\begin{definition}
The \emph{field of Hahn series} $K= \C \llbracket t^{\R} \rrbracket$ in the indeterminate $t$ over the field $\C$ and with value group $\R$ is the set of formal expressions of the form 
\[
c = \sum_{e \in \R} \mu_e t^e
\]
with $\mu_e \in \C$ such that the support 
\[
\mathrm{supp} := \{e \in \R : \mu_e \neq 0 \} 
\]
of $c$ is well-ordered. 

The \emph{valuation} $\nu(c)$ of a non-zero Hahn series $c = \sum_{e \in \R} \mu_e t^e \in K$ is defined as the smallest $e$ such that $\mu_e \neq 0$. This valuation makes $K$ a valued field with value group $\R$. 
\end{definition} 

\begin{definition}
Given a Laurent polynomial $f = \sum_{q\in N} c_q t^q$ in $K[N]$, we define the \emph{tropicalization}  $f^{\trop} : M_{\R} \rightarrow \R$ of $f$ to be the piecewise affine function 
\[
\min \{\mathrm{val}(c_q) + \langle q, \cdot \rangle \}.
\]
We define the \emph{tropical amoeba} to be the real hypersurface in $M_{\R}$ given by the non-smooth locus of $f^{\trop}$. 
\end{definition}

In particular, we can view the expansion of $W_t$ at a generic point in the special chamber $\sigma_{0}$ as an element in $K[N]$.

\subsection{Tropicalization of the superpotential}
In this subsection, we study the shape of the convex polygon 
\[
  \Xi := \{ m \in M_{\R} \mid   W_t^{\trop}(m) \geq 0  \}.
\]
In particular, $\Xi$ is the intersection of polygons $\bigcap_{i=1}^{n}  \Xi_i$ where 
\[
\Xi_i := \{ m \in M_{\R} \mid   \vartheta_{i,t}^{\trop}(m) \geq - \lambda_i \}
\]

For each $i$, without loss of generality, we could assume that the ordering of the distance between walls $\fd_{ij}$ and the origin is the opposite of the ordering of $c_{ij}$'s, i.e., the bigger the $c_{ij}$ is, the closer the wall $\fd_{ij}$ is to the origin.

\begin{lemma}\label{lem:max-bro}
Fix a generic point $S_0$ in $\sigma_0$. For each $i$, there exists a unique, maximally bent broken line $\gamma_i$ with initial direction $n_i$ and  ending at $S_0$ such that the faces of $\Xi_i$ are in 1-1 correspondence with monomials attached to each domain of linearity of $\gamma_i$. 
\end{lemma}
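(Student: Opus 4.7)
The plan is to enumerate the broken lines contributing to the expansion of $\vartheta_{i,t}$ at $S_0$, single out the maximally bent one $\gamma_i$, and match its domain-monomials with the edges of $\Xi_i$. By Lemma~\ref{lem:expan-sup}, any broken line with initial direction $n_i$ ending at a generic $S_0 \in \sigma_0$ bends only at the initial walls in $\bar{\D}'_0$. Each such wall $\mathfrak{d}_{jk}$ is a line parallel to $\rho_j$ with wall-crossing function $1+z^{(n_j,\varphi(n_j)-[E_{jk}])}$, so each wall crossing either passes through unchanged or picks up a positive-integer power of $z^{(n_j,\varphi(n_j)-[E_{jk}])}$. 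Conditions \eqref{eq:ample-con-3}--\eqref{eq:ample-con-5} fix the ordering of the $\mathfrak{d}_{jk}$'s by distance from the origin, which together with genericity of $S_0$ determines the sequence of walls an admissible broken line encounters. I would define $\gamma_i$ to be the broken line that bends with multiplicity exactly one at each wall it crosses, and establish its existence and uniqueness by a recursive backward construction from $S_0$: the direction of the last segment is determined by which walls have been bent at, and intersecting it with the last wall fixes the bending point.

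Next, for each domain of linearity $L_j$ of $\gamma_i$, I would verify that the attached monomial $n_{L_j}$ actually appears as the final monomial of some broken line in $\vartheta_{i,t}$ by exhibiting a truncated broken line $\gamma_{i,j}$ that shares the first $j$ bendings with $\gamma_i$ and then travels straight to $S_0$ from the $j$-th bending point without bending at later walls. The existence of $\gamma_{i,j}$ follows from the same geometric setup as $\gamma_i$: once the sequence of bendings is chosen, the endpoint condition and the combinatorics of the walls uniquely determine the piecewise-linear path. Each $\gamma_{i,j}$ contributes the affine function $\omega\cdot\beta_{L_j} + \langle r(q_{L_j}),\cdot\rangle$ to $\vartheta_{i,t}^{\trop}$.

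The last and hardest step is to show that these $n_{L_j}$'s define precisely the edges of $\Xi_i$. Consecutive monomials $n_{L_{j-1}}$ and $n_{L_j}$ differ by the single wall factor picked up at the $j$-th bending, so their affine functions agree along a line perpendicular to the wall direction, producing a common vertex of $\Xi_i$ with neighbouring edges of slopes $r(q_{L_{j-1}})$ and $r(q_{L_j})$; distinct bent-at walls then force consecutive edges to be non-parallel. The main obstacle is ruling out every other monomial of $\vartheta_{i,t}$ from defining an edge: multiplicity-$>1$ bendings at a single wall add strictly positive $\omega$-weight by condition \eqref{eq:ample-con-4}, while broken lines that skip some of the $\gamma_i$-bendings have Newton polytope weights expressible as convex combinations of neighbouring $n_{L_j}$'s. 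Condition \eqref{eq:ample-con-5} is needed to make this dominance uniform, ensuring that the lower convex hull of the Newton polytope of $\vartheta_{i,t}$ is spanned exactly by the $n_{L_j}$'s. This combinatorial dominance analysis, carried out with the explicit form of the wall-crossing functions attached to $\bar{\D}'_0$, is the technical core of the proof.
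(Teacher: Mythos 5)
Your proposal is correct and follows essentially the same route as the paper: restrict all bendings to the initial walls via Lemma~\ref{lem:expan-sup}, build $\gamma_i$ by bending at every wall crossed, and use the wall-ordering convention together with conditions \eqref{eq:ample-con-3}--\eqref{eq:ample-con-5} to match the resulting monomials with the faces of $\Xi_i$. The paper's own argument is much terser, with existence hinging on the single observation that any contributing broken line meets each initial wall from the side not containing $\sigma_0$ and hence always bends towards $S_0$; your backward recursive construction and Newton-polytope dominance analysis just spell out the details the paper leaves implicit.
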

\begin{proof}
By Lemma \ref{lem:expan-sup}, any broken line contributing to $W_{t,S_0}$ has can only bend at an initial wall in $\bar{\D}'_0$. Also, observe that for any such broken line, when it bends at an initial wall, it can only hit the initial wall from the left side, i.e., the side that does not contain $\sigma_0$. As a result, for any broken line contributing to $W_{t,S_0}$,  $S_0$ is always on its left side and it always bends towards $S_0$. The existence of $\gamma_i$ thus follows. 

The second statement of the lemma follows from our assumption about the ordering of the distance between walls $\fd_{ij}$ and the origin. 
\end{proof}

    For each $i$, let $\vartheta^{*}_{i,t}$ be a truncation of $\vartheta_{i,t}$ defined as follows.  If there is at least one non-toric blowup on $\bar{D}_{i+1}$, let $\vartheta^{*}_{i,t}$ be the sum of all monomials attached to the truncated $\gamma_i$ after we truncate all the bending that happens after $\gamma_i$ crosses initial walls parallel to $\rho_{i+1}$. Otherwise,  just let $\vartheta^{*}_{i,t}$ be equal to the monomial $z^{n_i}$. 
Let $W_{t}^{*} = \sum_i t^{\lambda_{i}}\vartheta^{*}_{i,t}$ and 
\begin{equation}\label{eq:truncated_tropicalization_polytope}
     \Xi^* := \{ m \in M_{\R} \mid   (W_{t}^{*})^{\trop}(m) \geq 0  \}.
\end{equation}

\begin{lemma}
\label{lemma:Xi=Xi*}
  $\Xi^{*}$ is a bounded non-singular convex polytope and monomials in $W_{t}^{*}$ are in $1-1$ correspondence with faces of $\Xi$. Moreover, we have $\Xi = \Xi^{*}$.
\end{lemma}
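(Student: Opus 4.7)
The plan is to analyze the monomials of each $\vartheta_{i,t}^*$ explicitly using the maximally bent broken line $\gamma_i$ of Lemma \ref{lem:max-bro}, and then verify the three claims in turn, with the equality $\Xi=\Xi^*$ being the only nontrivial one. First I would enumerate the monomials of $\vartheta_{i,t}^*$: by Lemma \ref{lem:max-bro} the line $\gamma_i$ bends only at initial walls, and the ordering assumption on the $c_{ij}$ fixes the order in which these bendings occur, so after truncation one obtains a finite list of exponents $q_{i,0}=n_i,q_{i,1},\ldots,q_{i,K_i}\in N$ with coefficients $t^{e_{i,k}}$, where each $e_{i,k}$ is a nonnegative $\Z$-combination of the $c_{i'j}$'s read off from the sequence of bendings. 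By the standard bending formula $q_{i,k+1} = q_{i,k} + \langle m_{\mathfrak{d}}, q_{i,k}\rangle\,n_{i(k)}$ at a wall parallel to $\rho_{i(k)}$, the exponents $q_{i,k}$ trace a fan refinement inside the cone swept by the truncated portion of $\gamma_i$. The genericity condition \eqref{eq:ample-con-3} then makes the values $\lambda_i+e_{i,k}$ pairwise distinct, which is exactly what is needed for each monomial of $\vartheta_{i,t}^*$ to contribute a genuine facet of $\Xi_i^*$; patching across $i$ yields the claimed $1$-$1$ correspondence between monomials of $W_t^*$ and faces of $\Xi^*$.

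To see that $\Xi^*$ is a bounded smooth convex polytope, I would argue that as $i$ runs through $1,\ldots,n$ the exponents $\{q_{i,k}\}$ sweep counterclockwise around $N_\R\setminus\{0\}$ and generate rays in every sector, so the outer normals of the facets of $\Xi^*$ cover $M_\R$; this gives boundedness. Smoothness at each vertex reduces to checking that the two adjacent meeting exponents form a $\Z$-basis of $N$, which is inherited from the toric resolution structure because the truncated broken line fan locally mimics the fan of the toric blowup realizing the exceptional curves $E_{ij}$ at the toric model.

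The heart of the argument, and the main obstacle, is the equality $\Xi=\Xi^*$. The inclusion $\Xi\subseteq\Xi^*$ is immediate because $W_t^{\trop}\le (W_t^*)^{\trop}$. For the reverse inclusion, every monomial of $\vartheta_{i,t}$ not appearing in $\vartheta_{i,t}^*$ arises from continuing $\gamma_i$ past the initial walls parallel to $\rho_{i+1}$, so its exponent is of the form $q = q_{i,K_i} + \sum_\ell a_\ell\, n_{j(\ell)}$ with $a_\ell>0$, and its valuation exceeds $e_{i,K_i}$ by a positive $\Z$-combination of the $c_{j(\ell),*}$'s. The key is to show that on $\Xi^*$ the inequality $\lambda_i + \mathrm{val}(\mathrm{coef}) + \langle q, m\rangle \ge 0$ is already implied by one of the inequalities coming from $\vartheta_{i+1,t}^*$: one rewrites $\langle q, m\rangle$ via a bending identity against an exponent present in $\vartheta_{i+1,t}^*$, and the surplus valuation from the extra bendings absorbs the difference between $\lambda_i$ and $\lambda_{i+1}$ thanks to conditions \eqref{eq:ample-con-4} and \eqref{eq:ample-con-5}. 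This careful bookkeeping of the excess bendings, propagating along the indices $i$, is where the combinatorial work of the proof concentrates.
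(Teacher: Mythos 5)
Your overall route matches the paper's (which itself gives only a terse proof: convexity is definitional, boundedness because $\Xi^*$ is a chopping of the moment polytope of the ample divisor $\sum_i\lambda_i\bar D_i$, facet--monomial correspondence from condition \eqref{eq:ample-con-5}, non-singularity from $c_{ij}\ne c_{ij'}$, and $\Xi=\Xi^*$ from Lemma \ref{lem:max-bro} plus \eqref{eq:ample-con-5}). But there is a concrete gap in how you allocate the hypotheses.

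You claim that condition \eqref{eq:ample-con-3} (pairwise distinctness of the $c_{ij}$) ``is exactly what is needed'' for each monomial of $\vartheta_{i,t}^*$ to cut out a genuine facet. It is not: a tropical half-plane constraint $\mathrm{val}(c_q)+\langle q,\cdot\rangle\ge 0$ can be redundant in the intersection $\bigcap_v\{\beta_v\ge 0\}$ no matter how distinct its valuation is from the others. What actually forces every monomial to be active is quantitative: within a single corner one needs the partial sums $\sum_{k\le j}c_{(i+1)k}$ to be strictly convex in $j$ (this is where distinctness together with the chosen ordering of the walls enters), and across corners one needs condition \eqref{eq:ample-con-5}, which says the affine length $\omega\cdot D_i$ of the facet $F_i$ exceeds the maximal chopping depth $\lambda_{i+1}-\min_k c_{i+1,k}$ at the adjacent vertex, so the choppings do not swallow $F_i$. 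You only invoke \eqref{eq:ample-con-5} later, in the $\Xi=\Xi^*$ step, whereas the paper needs it already here.

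Relatedly, your smoothness argument only checks that the normals of two adjacent facets form a $\Z$-basis (which is automatic, since $n_{i-1}+jn_i$ and $n_{i-1}+(j+1)n_i$ are unimodular whenever $n_{i-1},n_i$ are); it silently assumes each vertex is simple. Ruling out three concurrent constraint lines at a vertex is precisely the role of $c_{ij}\ne c_{ij'}$, the hypothesis you spent on the facet correspondence. Your sketch of $\Xi=\Xi^*$ (the inclusion $\Xi\subseteq\Xi^*$ from $W_t^{\trop}\le (W_t^*)^{\trop}$, and the reverse by showing the truncated monomials' constraints are implied on $\Xi^*$ using the surplus valuation from extra bendings and conditions \eqref{eq:ample-con-4}--\eqref{eq:ample-con-5}) is the right strategy and consistent with the paper, though it remains a plan rather than a completed computation.
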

\begin{proof}
That $\Xi^{*}$ is convex follows directly from the definition. Condition \ref{eq:ample-con-2} implies $\sum_i \lambda_i \bar{D}_i$ is an ample divisor on $\bar{Y}$. Then, if we only tropicalize the contribution of straight broken lines to $W_t$, we get a  bounded polytope, i.e., the moment polytope corresponding to $\sum_i \lambda_i \bar{D}_i$. Thus, that $\Xi^{*}$ is bounded  follows from the fact that it is a `chopping' of a bounded polytope. Condition \ref{eq:ample-con-5} implies that faces of $\Xi^{*}$ are in $1-1$ correspondence with monomials in $W_{t}^{*}$. That $\Xi^{*}$ is non-singular follows from that for each $i$, $c_{ij}\neq c_{ij'}$ if $j \neq j'$. The final statement that $\Xi = \Xi^{*}$ follows from Lemma \ref{lem:max-bro} and condition \ref{eq:ample-con-5}.

\end{proof}

\section{Oscillatory Integral on the Mirror Cycle of Structure Sheaf}

\label{sec:structuresheaf}

This section is devoted to the evaluation of the oscillatory integral on the cycle mirrored to the structure sheaf. We use the torus chart corresponding to the special chamber to do the computation. We choose a torus fibration of the torus chart, decompose the base space into several pieces, and evaluate the oscillatory integral on each piece.

For the rest of this paper, we fix a choice of $\omega = \omega_b+\omega_e = \sum_{i}\lambda_i D_i + \sum_{i,j}c_{ij} E_{ij}$ as in Section \ref{sec:super_superpotential}. We also denote the total transform $D_i+\sum_{j=1}^{l_i}E_{ij}$ by $D_i'$.

\subsection{Decomposition of the oscillatory integral}
\label{sec:structuresheaf_decomposition}
We denote the torus chart $\Spec\C[\Lambda_{\sigma_0}]$ in $\cX_{t}$ corresponding to the special chamber $\sigma_0$ by $U_{\sigma_0}$ and let $z_1=z^{(1,0)},z_2=z^{(0,1)}$ be its coordinates. Then the mirror cycle of the structure sheaf is the positive real locus of $U_{\sigma_0}$ in $\cX_{t}$, which we denote by $\Gamma_{\cO}$. The truncated theta functions $\vartheta^{*}_{i,t}$ restricted to $U_{\sigma_0}$ are
\begin{equation}\label{eq:truncated_theta_function_on_special_chamber}
    \vartheta^{*}_{i,t}|_{U_{\sigma_0}} = z^{n_i}\left(1+\sum\limits_{j=1}^{l_{i+1}}\prod\limits_{k=1}^{j}\left(t^{c_{(i+1)k}}z^{n_{i+1}}\right)\right),
\end{equation}
and thus we have that
\begin{equation}\label{eq:truncated_superpotential_on_special_chamber}
    (W_t^{*})^{\trop}|_{U_{\sigma_0}} = \min_{i\in\{1,\dots,n\}}\{\lambda_i+(\vartheta_{i,t}^*)^{\trop}\} = \min_{i\in\{1,\dots,n\}}\min_{j\in\{1,\dots,l_{i}\}}\{\beta_i(x),\beta_{ij}(x)\},
\end{equation}
where 
\begin{align*}
    \beta_i(x) &= \lambda_i+\langle x,n_i\rangle, \\
    \beta_{ij}(x) &= \lambda_{i-1}+\sum_{k=1}^jc_{ik}+\langle x,n_{ij}\rangle \text{ with } n_{ij}=n_{i-1}+jn_{i}.
\end{align*} 
We consider the torus fibration $\Log_t:U_{\sigma_0}\rightarrow M_{\R}\cong\R^2$ given by $\Log_t(z_1,z_2) = (\log_t(|z_1|),\log_t(|z_2|))$, then $ M_{\R}\times (M_{\R}/M)\cong U_{\sigma_0}$ through $(x,[x'])\mapsto \exp(\log t\cdot x+2\pi\sqrt{-1}x'))$. Since there is only one positive real point on each fiber $\Log_t^{-1}(x)$, we can regard $W_t|_{\Gamma_{\cO}}$ as a function on $M_{\R}$ and write it as 
\begin{equation}\label{eq:W_t_written_in_monomials}
    W_t(x) = \sum_{v\in V}t^{\beta_v(x)}+\sum_{\alpha}t^{\alpha(x)},\ \text{for $x\in M_\bR$},
\end{equation}
where $V=\{1,\dots,n\}\cup\bigcup\limits_{i=1}^n\{i1,i2,\dots,il_i\}$ and $\sum_{\alpha}t^{\alpha(x)} = (W_t-W_t^*)(x)$.

Now let us compute the oscillatory integral on $\Gamma_{\cO}$. Note that $\Omega|_{\Gamma_{\cO}}=(\log t)^2 dx_1\wedge dx_2$, so 
\begin{align*}
    Z_B(\Gamma_{\cO})=\int_{\Gamma_{\cO}}e^{-W_t}\Omega = (\log t)^2\int_{M_{\R}}e^{-W_t(x)}dx_1dx_2.
\end{align*}
We decompose $M_{\R}$ into several pieces and evaluate the integral on each piece. Set
\begin{align}\label{eq:polytope_P}
    \Polytope{\delta} :=& \{x\in M_{\R} \mid W_t^{\trop}|_{U_{\sigma_0}}(x)\geq\delta\}\nonumber\\
    =& \{x\in M_{\R} \mid (W_t^{*})^{\trop}|_{U_{\sigma_0}}(x)\geq\delta\}\nonumber\\
    =& \{x\in M_{\R} \mid \beta_v(x)\geq\delta \text{ for } v\in V\},\\
    \cPolytope{\delta} :=& \Log_t(\{z\mid W_t(z)\leq t^\delta\}\cap\Gamma_{\cO}).
\end{align}
Then $\Xi = \Xi^* = \Polytope{0}$ and the limit of $\cPolytope{\delta}$ is $\Polytope{\delta}$ when $t\rightarrow 0^+$.

\begin{lemma}[\cite{Mikhalkin_2004}, Corollary 6.4]
\label{lemma:mikhalkin}
    The region $\cPolytope{\delta}$ converges in the Hausdorff metric to $\Polytope{\delta}$ when $t\rightarrow 0^+$.
\end{lemma}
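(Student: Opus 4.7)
The plan is to establish Hausdorff convergence via two inclusions: one exact containment $\cPolytope{\delta}\subseteq\Polytope{\delta}$ valid for every $t\in(0,1)$, and one approximate reverse containment, namely that for any prescribed $\epsilon>0$ and every sufficiently small $t$, each point of $\Polytope{\delta}$ lies within $\epsilon$ of some point of $\cPolytope{\delta}$.

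For the exact inclusion I would argue directly from positivity on $\Gamma_{\cO}$. In the expression \eqref{eq:W_t_written_in_monomials}, each summand is a positive real number on the positive real locus, hence $W_t(x)\geq t^{\beta_v(x)}$ for every $v\in V$. The assumption $W_t(x)\leq t^\delta$ therefore forces $t^{\beta_v(x)}\leq t^\delta$, which gives $\beta_v(x)\geq\delta$ for each $v$ since $t<1$. Consequently $W_t^{\trop}(x)=\min_v\beta_v(x)\geq\delta$, so $x\in\Polytope{\delta}$.

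For the approximate reverse inclusion, fix $\epsilon>0$ and a small $\eta>0$. The shrunken polytope $\Polytope{\delta+\eta}=\{x:\beta_v(x)\geq\delta+\eta\ \forall v\}$ lies within Hausdorff distance $O(\eta)$ of $\Polytope{\delta}$, since the defining slopes $n_v$ are fixed; so we may choose $\eta$ with $\Polytope{\delta+\eta}$ inside the $\epsilon$-neighborhood of $\Polytope{\delta}$. For $x\in\Polytope{\delta+\eta}$ the truncated part admits the uniform bound $W_t^{*}(x)\leq|V|\,t^{\delta+\eta}$, and it then remains to produce a uniform bound $\alpha(x)\geq\delta+\eta'$, some $\eta'>0$, for every extra monomial in $W_t-W_t^{*}$. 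Granted such a bound, one obtains $W_t(x)\leq M\,t^{\delta+\min(\eta,\eta')}$ for a constant $M$, which is strictly smaller than $t^{\delta}$ once $t$ is small, placing $x\in\cPolytope{\delta}$. Combined with the exact inclusion, this gives Hausdorff convergence.

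The main obstacle is precisely the uniform estimate on the extra monomials $\sum_\alpha t^{\alpha(x)}$, which are not constrained by the definition \eqref{eq:polytope_P} of $\Polytope{\delta}$. These come from broken lines that continue bending at initial walls beyond those retained in $\vartheta^{*}_{i,t}$; each such additional bending at a wall $\mathfrak{d}_{ij}$ contributes the strictly positive constant $c_{ij}$ (see \eqref{eq:ample-con-3}) to the $t$-exponent, so every extra monomial satisfies $\alpha(x)\geq\beta_{v}(x)+c_{0}$ for some $v\in V$, with $c_{0}:=\min_{i,j}c_{ij}>0$. On $\Polytope{\delta+\eta}$ this yields $\alpha(x)\geq\delta+\eta+c_{0}$, providing the required $\eta'\geq c_{0}$. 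This structural observation, combined with Lemma~\ref{lemma:Xi=Xi*}, effectively reduces the statement to Mikhalkin's Corollary 6.4, adapted to the positive real locus where no monomial cancellations occur.
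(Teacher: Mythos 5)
The paper itself offers no argument here: the lemma is quoted directly from \cite{Mikhalkin_2004}, so your attempt to give a self-contained proof is a genuinely different (and in principle more informative) route. Your overall two-inclusion scheme is sound, and the exact inclusion $\cPolytope{\delta}\subseteq\Polytope{\delta}$ is correct as you argue it: on the positive real locus every term of \eqref{eq:W_t_written_in_monomials} is positive, so $W_t(x)\le t^{\delta}$ forces $t^{\beta_v(x)}\le t^{\delta}$ for each $v$.

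The gap is in the uniform estimate on the extra monomials, which is the crux of the reverse inclusion. You claim that each additional bending at a wall $\fd_{kl}$ ``contributes the strictly positive constant $c_{kl}$ to the $t$-exponent,'' so that $\alpha(x)\ge\beta_v(x)+c_0$. But an additional bending multiplies the attached monomial by $t^{c_{kl}}z^{n_k}$, not by $t^{c_{kl}}$ alone; its contribution to the exponent is the affine function $c_{kl}+\langle x,n_k\rangle$, and $\langle x,n_k\rangle=\beta_k(x)-\lambda_k$ is negative on a large part of $\Polytope{\delta}$ (near the face where $\beta_k=\delta$ it is approximately $-\lambda_k$). Since $c_{kl}<\lambda_k$ by \eqref{eq:ample-con-4}, the net gain per bending can be negative, so positivity of the $c_{ij}$ in \eqref{eq:ample-con-3} does not by itself give $\alpha(x)\ge\beta_v(x)+c_0$. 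What actually controls the extra monomials is condition \eqref{eq:ample-con-5} together with Lemma~\ref{lemma:Xi=Xi*}: these guarantee that the tropicalizations $\alpha$ of the discarded monomials satisfy $\alpha(x)>\min_v\beta_v(x)$ on the polytope. To repair your argument, invoke this strict inequality on the compact set $\Polytope{\delta+\eta}$, where each affine function $\alpha-(\delta+\eta)$ attains a strictly positive minimum; this supplies the $\eta'>0$ you need. (This is precisely the role of the constant $\epsilon'$ chosen in Section~4.1, whose existence the paper attributes to Lemma~\ref{lemma:Xi=Xi*} and the genericity of $\omega$.) With that substitution the rest of your argument goes through.
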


By choosing a small $\epsilon$ and when $t$ is small enough, we intend to decompose $Z_B(\Gamma_{\cO})$ as 
\begin{equation}
    \begin{split}
    Z_B(\Gamma_{\cO}) = (\log t)^2\left(\int_{\mathcal{P}_\epsilon}e^{-W_t(x)}dx_1dx_2 + \int_{M_{\R}\backslash\mathcal{P}_\epsilon}e^{-W_t(x)}dx_1dx_2\right).
    \end{split}
\end{equation}
The combinatorial information of $\Polytope{\delta}$ allows us to compute each part. On $\cPolytope{\epsilon}$, we measure the difference between $\vol(\cPolytope{\epsilon})$ and $\vol(\Polytope{\epsilon})$. On $M_{\R}\backslash\cPolytope{\epsilon}$, we foliate it with $\partial\cPolytope{\delta},\delta \leq \epsilon$ and measure the difference between $\vol_{\affine}(\partial\cPolytope{\delta})$ and $\vol_{\affine}(\partial\Polytope{\delta})$ using the result from \cite{A-G-I-S_2020}.

The proper choice of $\epsilon$ is as follows. For each $q\in V$ and each pair $\{q,q'\}\subset V$ with $n_q,n_{q'}$ adjacent, let us set
\begin{align}    
    \cB^{q} =& \left\{x\in \Polytope{\epsilon} \backslash \cPolytope{\epsilon} \middle\vert\ 
    \begin{aligned}
        \beta_q(x)\leq\beta_v(x)-\epsilon' \text{ for }v\in V\backslash \{q\}.\\
    \end{aligned}\right\}\\
    \cB^{q,q'} =& \left\{x\in \Polytope{\epsilon} \backslash \cPolytope{\epsilon} \middle\vert\ 
    \begin{aligned}
        \beta_{q'}(x)-\epsilon'<\beta_q(x)\leq\beta_{q'}(x),\\
        \beta_q(x)\leq\beta_v(x)-\epsilon' \text{ for }v\in V\backslash \{q,q'\}.\\
    \end{aligned}\right\}     
\end{align}
We choose $\epsilon'>\epsilon>0$ and a positive real number $b$ such that whenever $0<t<b$, we have
\begin{itemize}
    \item $\cB^q$'s and $\cB^{q,q'}$'s are disjoint from each other.
    \item $\Polytope{\epsilon} \backslash \cPolytope{\epsilon}$ is covered by $\cB^q$'s and $\cB^{q,q'}$'s.
    \item For any monomial $t^{\alpha(x)}$ in (\ref{eq:W_t_written_in_monomials}), there exists some $v\in V$ such that $t^{\alpha(x)}<t^{\epsilon'} t^{\beta_v(x)}$.
\end{itemize}
The existence of $\epsilon'$ for the last condition is because Lemma \ref{lemma:Xi=Xi*} and the generic choice of $\omega$. The existence of $b$ is guaranteed by Lemma \ref{lemma:mikhalkin}.

We will later compute the difference between $\vol(\Polytope{\epsilon})$ and $\vol(\cPolytope{\epsilon})$ by evaluating $\vol(\cB^{q})$'s and $\vol(\cB^{q,q'})$'s. Figure \ref{fig:Decomposition_of_base} illustrates a neighborhood of a corner of $\Polytope{\epsilon}$ and how we decompose $\Polytope{\epsilon}\backslash\cPolytope{\epsilon}$.

\begin{figure}
    \centering
    \includesvg[width = 0.8\textwidth]{Figures/Decomposition_of_base.svg}
    \caption{Decomposition of $\Polytope{\epsilon}\backslash\cPolytope{\epsilon}$}
    \label{fig:Decomposition_of_base}
\end{figure}

\subsection{Oscillatory integral on $\cPolytope{\epsilon}$}
We have $e^{-W_t(x)}=O(t^\epsilon)$ since $W_t(x)<t^\epsilon$ for $x\in\cPolytope{\epsilon}$, so 
\[
    \int_{\cPolytope{\epsilon}}e^{-W_t(x)}dx_1dx_2 = \left(1+O(t^\epsilon)\right)\vol(\cPolytope{\epsilon}).
\]
We want to evaluate it using the volume of $\Polytope{\epsilon}$, so we need to know $\vol(\Polytope{\epsilon})-\vol(\cPolytope{\epsilon})$.

\begin{proposition}\label{prop:difference_of_P_and_cP}
    The difference between the volumes of $\Polytope{\epsilon}$ and $\cPolytope{\epsilon}$ is
    \begin{align*}
        \vol(\Polytope{\epsilon})-\vol(\cPolytope{\epsilon}) = \frac{1}{(\log t)^2}\left((n+\sum_{i=1}^{n}l_i)\zeta(2)+O(t^\epsilon)\right).
    \end{align*}
\end{proposition}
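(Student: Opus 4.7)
The plan is to quantify the symmetric difference $\Polytope{\epsilon}\setminus\cPolytope{\epsilon}$ by working directly with the decomposition into facet strips $\cB^{q}$ and corner regions $\cB^{q,q'}$ introduced in Section \ref{sec:structuresheaf_decomposition}. By Lemma \ref{lemma:Xi=Xi*}, $\Polytope{\epsilon}$ is a bounded non-singular convex polygon whose facets are in bijection with $V$, so it has exactly $|V|=n+\sum_{i}l_{i}$ vertices; the main term of the proposition should arise as one copy of $\zeta(2)/(\log t)^{2}$ per vertex.

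First I would analyze a facet piece $\cB^{q}$. The defining inequalities of $\cB^{q}$ force $\beta_{q}$ to strictly minimize the affine functions $\beta_{v}$, and the choice of $\epsilon'$ guarantees that every other monomial in the expansion \eqref{eq:W_t_written_in_monomials} is bounded by $Ct^{\beta_{q}+\epsilon'}$. Hence on $\cB^{q}$ we have $W_{t}=t^{\beta_{q}}(1+\eta)$ with $|\eta|\le Ct^{\epsilon'}$, and the condition $W_{t}>t^{\epsilon}$ combined with $\beta_{q}\ge \epsilon$ confines $x$ to a strip of width $O(t^{\epsilon'}/|\log t|)$ along a facet of bounded length. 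Summing over the $|V|$ facets yields a total facet contribution of $O(t^{\epsilon'}/|\log t|)=O(t^{\epsilon}/(\log t)^{2})$, since $t^{\epsilon'-\epsilon}|\log t|\to 0$.

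Next I would handle a corner piece $\cB^{q,q'}$, which produces the main term. Because $\Polytope{\epsilon}$ is non-singular, $|n_{q}\wedge n_{q'}|=1$, so the change of coordinates $(u,v)=(\beta_{q}-\epsilon,\beta_{q'}-\epsilon)$ on $M_{\R}$ has unit Jacobian. In these coordinates, the two dominant monomials of $W_{t}$ assemble into $t^{\epsilon}(t^{u}+t^{v})$ and the remaining terms contribute a relative error of order $O(t^{\epsilon'})$, so up to such an error the piece of $\Polytope{\epsilon}\setminus\cPolytope{\epsilon}$ inside $\cB^{q,q'}$ is the region $\{u,v\ge 0,\ t^{u}+t^{v}>1\}$. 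Its area is evaluated via the substitution $s=t^{u},\ w=t^{v}$:
\begin{equation*}
\int_{0}^{\infty}\!\!\int_{0}^{\infty}\mathbf{1}[t^{u}+t^{v}>1]\,du\,dv=\frac{1}{(\log t)^{2}}\int_{0}^{1}\!\!\int_{0}^{1}\mathbf{1}[s+w>1]\frac{ds\,dw}{sw}=\frac{1}{(\log t)^{2}}\int_{0}^{1}\frac{-\log(1-s)}{s}\,ds=\frac{\zeta(2)}{(\log t)^{2}},
\end{equation*}
where the last equality follows from the series expansion $-\log(1-s)=\sum_{k\ge 1}s^{k}/k$.

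Summing the $|V|=n+\sum_{i}l_{i}$ corner contributions gives the stated main term, while all facet and corner errors combine into the advertised $O(t^{\epsilon})/(\log t)^{2}$. The main obstacle I expect is the bookkeeping at the corners: one must show that restricting $\cB^{q,q'}$ to a compact neighborhood instead of the full positive quadrant, and replacing $W_{t}$ by its two-term approximation, both cost only $O(t^{\epsilon}/(\log t)^{2})$. The first is controlled because the tail of the integral outside a box $\{u\le R\}\cup\{v\le R\}$ is bounded by $\int_{R}^{\infty}t^{u}\,du\cdot O(1/|\log t|)=O(t^{R}/(\log t)^{2})$, and $R$ may be chosen slightly smaller than $\epsilon'$; the second is a strip estimate between the curves $t^{u}+t^{v}=1$ and $t^{u}+t^{v}=1+O(t^{\epsilon'})$, whose area in $(u,v)$-coordinates is $O(t^{\epsilon'}/|\log t|)$. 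Combining these estimates with the exact $\zeta(2)$-integral completes the proof.
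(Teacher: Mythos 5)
Your proposal is correct and follows essentially the same route as the paper: the same decomposition of $\Polytope{\epsilon}\setminus\cPolytope{\epsilon}$ into facet strips $\cB^{q}$ (contributing $O(t^{\epsilon})/(\log t)^{2}$) and corner regions (each contributing $\zeta(2)/(\log t)^{2}$), with the same use of non-singularity of $\Xi$ to normalize the corner coordinates. The only difference is cosmetic: you evaluate each full corner as the single dilogarithm integral $\int_{0}^{1}\int_{0}^{1}\mathbf{1}[s+w>1]\,\frac{ds\,dw}{sw}=\zeta(2)$, whereas the paper splits each vertex into the two half-regions $\cB^{q,q'}$ and $\cB^{q',q}$ and evaluates each as the one-dimensional integral $\int_{0}^{\epsilon'}-\log_{t}(1+t^{c})\,dc=\tfrac{1}{2}\zeta(2)/(\log t)^{2}+O(t^{\epsilon})/(\log t)^{2}$; the tail and overlap issues you flag are genuine but, as you note, are absorbed into the $O(t^{\epsilon})$ error.
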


\begin{proof}
    We compute the affine volumes of $\cB^{q}$'s and $\cB^{q,q'}$'s respectively. For each $\cB^{q}$, choose a primitive integer vector $\gamma_q$ such that $n_q\wedge\gamma_q=1$ and let $c_q(x)=\langle x,\gamma_q\rangle$. Let $C_1=\min_{x\in \cB^{q}}\{c_q(x)\}$ and $C_2=\max_{x\in \cB^{q}}\{c_q(x)\}$. For each $c\in[C_1,C_2]$, let $x_1^{q}(c),x_2^{q}(c)$ be the two endpoints of $\{x \mid c_q(x)=c\}\cap \cB^{q}$, then
    \begin{align*}
        \vol(\cB^{q}) =& \int_{\cB^{q}} d\beta_q\wedge dc_q\\
        =& \int_{C_1}^{C_2}\left|\beta_q(x_1^{q}(c))-\beta_q(x_2^{q}(c))\right|dc.
    \end{align*}
    We know $\beta_q(x)=\epsilon + O(t^{\epsilon'})$ for $x\in \cB^{q}$ because $W_t|_{\cB^{q}}(x)=t^{\beta_q(x)}\left(1+O(t^{\epsilon'})\right)>t^\epsilon$ and $\beta_q(x)>\epsilon$. Then $|\beta_q\left(x_1^{q}(c)\right)-\beta_q\left(x_2^{q}(c)\right)|=O(t^{\epsilon'})$, and 
    \begin{align*}
        \vol(\cB^{q}) = O(t^{\epsilon'}) = \frac{1}{(\log t)^2}O(t^{\epsilon}).
    \end{align*}
    The region $\cB^{q,q'}$is bounded by $(\beta_{q'}-\beta_q)(x)=0$ and $(\beta_{q'}-\beta_q)(x)=\epsilon'$. For $c\in[0,\epsilon']$, let $x_1^{q,q'}(c), x_2^{q,q'}(c)$ be the two endpoints of $\{x \mid (\beta_{q'}-\beta_q)(x)=c\}\cap \cB^{q,q'}$. Then
    \begin{align*}
        \vol(\cB^{q,q'}) =& \int_{\cB^{q,q'}} d\beta_q\wedge d(\beta_{q'}-\beta_q)\\
        =&\int_{0}^{\epsilon'} \left|\beta_q(x_2^{q,q'}(c))-\beta_q(x_1^{q,q'}(c))\right| dc.
    \end{align*}
     Since $\beta_q(x_1^{q,q'}(c))=\epsilon$ and $W_t(x_2^{q,q'}(c)) = t^{\beta_q(x_2^{q,q'}(c))}\left(1+t^c+O(t^{\epsilon'})\right)=t^\epsilon$, we have $\left|\beta_q(x_2^{q,q'}(c))-\beta_q(x_1^{q,q'}(c))\right| = -\log_t\left(1+t^c+O(t^{\epsilon'})\right)$, and thus 
    \begin{align*}
        \vol(\cB^{q,q'})=&\int_{0}^{\epsilon'}-\log_t(1+t^c+O(t^{\epsilon'}))dc\\
        =&\int_{0}^{\epsilon'}-\log_t(1+t^c)dc+\int_{0}^{\epsilon'}\log_t(1+t^c)-\log_t(1+t^c+O(t^{\epsilon'}))dc\\
        =&\frac{1}{(\log t)^2}\int_{1}^{t^{\epsilon'}}-\frac{\log(1+u)}{u}du+\frac{O(t^{\epsilon'})}{\log t}\\
        =&\frac{1}{(\log t)^2}\left(\sum_{l=1}^\infty \frac{(-1)^{l+1}}{l^2}(1-t^{l\epsilon'})+\log t O(t^{\epsilon'})\right)\\
        =&\frac{1}{(\log t)^2}\left(\frac{1}{2}\zeta(2)+O(t^{\epsilon})\right).
    \end{align*}
    Now since the number of $\cB^{q}$'s is $n+\sum_{i=1}^nl_i$ and the number of $\cB^{q,q'}$'s is $2(n+\sum_{i=1}^nl_i)$, we have   \begin{align*}
        \vol(\Polytope{\epsilon})-\vol(\cPolytope{\epsilon})
        = \frac{1}{(\log t)^2}\Bigl((n+\sum_{i=1}^{n}l_i)\zeta(2)+O(t^\epsilon)\Bigr).
    \end{align*}
\end{proof}
Using Proposition \ref{prop:difference_of_P_and_cP},  
\begin{equation}\label{eq:Z_topo(O)_on_P(epsilon)}
    \int_{\cPolytope{\epsilon}}e^{-W_t}dx_1dx_2   =\vol(\Polytope{\epsilon})-\frac{1}{(\log t)^2}\Bigl((n+\sum_{i=1}^{n}l_i)\zeta(2)+O(t^\epsilon)
    \Bigr).
\end{equation}

\subsection{Oscillatory integral on \texorpdfstring{$M_{\R}\backslash\mathcal{P}_\epsilon$}{}} 
Note that $M_{\R}\backslash\mathcal{P}_\epsilon = \bigcup_{\delta\in[\epsilon,-\infty)}\partial\cPolytope{\delta}$ and $W_t(x)|_{\partial\cPolytope{\delta}} = t^\delta$, so 
\[
    \int_{M_{\R}\backslash\mathcal{P}_\epsilon}e^{-W_t(x)}dx_1dx_2 = \int_{\epsilon}^{\infty} -e^{-t^\delta}\vol_{\affine}(\partial\cPolytope{\delta})d\delta
\] 
with $\vol_{\affine}(\partial\cPolytope{\delta}) = -\frac{\partial\vol\cPolytope{\delta}}{\partial\delta}$. We want to compare $\vol_{\affine}(\partial\cPolytope{\delta})$ with $\vol_{\affine}(\partial \Polytope{\delta})$ using the result from \cite{A-G-I-S_2020}.

\begin{proposition}[\cite{A-G-I-S_2020}, Section 2.1]\label{prop:affine_volume_of_boundary_P(delta)}
   Let $\vol_{\affine}(\partial \cPolytope{\delta}) = -\frac{\partial\vol\cPolytope{\delta}}{\partial\delta}$ and $\vol_{\affine}(\partial \Polytope{\delta}) = -\frac{\partial\vol \Polytope{\delta}}{\partial\delta}$ be the affine volumes of $\partial \cPolytope{\delta}$ and $\partial \Polytope{\delta}$, then for $\delta \leq \epsilon$,
   \[
        \vol_{\affine}(\partial \cPolytope{\delta}) = \vol_{\affine}(\partial \Polytope{\delta}) + O(t^\epsilon).
   \]
\end{proposition}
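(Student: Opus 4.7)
The plan is to derive the estimate as the $\delta$-derivative of a $\delta$-parametric refinement of Proposition~\ref{prop:difference_of_P_and_cP}. Using
\[
    \vol_{\affine}(\partial\cPolytope{\delta}) - \vol_{\affine}(\partial\Polytope{\delta}) = -\frac{\partial}{\partial\delta}\bigl[\vol(\cPolytope{\delta}) - \vol(\Polytope{\delta})\bigr],
\]
the statement reduces to proving $\partial_\delta\bigl[\vol(\cPolytope{\delta}) - \vol(\Polytope{\delta})\bigr] = O(t^\epsilon)$ uniformly for $\delta\leq\epsilon$.

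First I would extend the decomposition of Section~\ref{sec:structuresheaf_decomposition} into a family parametrized by $\delta$: for each $\delta\leq\epsilon$, define edge regions $\cB^{q}_\delta$ and corner regions $\cB^{q,q'}_\delta$ by the same inequalities as before with $\epsilon$ replaced by $\delta$, and choose $\epsilon'>\epsilon$ and $b>0$ so that the three bulleted conditions hold uniformly in $\delta$. Running the volume calculation of Proposition~\ref{prop:difference_of_P_and_cP} verbatim then gives
\[
    \vol(\Polytope{\delta})-\vol(\cPolytope{\delta}) = \frac{1}{(\log t)^2}\Bigl(\bigl(n+\sum_{i=1}^n l_i\bigr)\zeta(2) + R(\delta,t)\Bigr),
\]
where $R(\delta,t) = O(t^\epsilon)$.

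The key observation is that the leading constant is independent of $\delta$. In the shifted coordinates $(u,v) = (\beta_q(x)-\delta,\beta_{q'}(x)-\delta)$ at a tropical corner, the vertex always sits at the origin, the smooth arc is cut out by $t^u + t^v(1 + O(t^{\epsilon'})) = 1$, and the integration parameter $c = v - u$ together with the leading width $-\log_t(1+t^c)$ of $\cB^{q,q'}_\delta$ are $\delta$-independent. All of the $\delta$-dependence in $R$ enters through the subleading monomials of $W_t$: writing $n_v = an_q + bn_{q'}$ at a non-singular vertex, each subleading monomial $t^{\beta_v-\beta_q}$ equals $t^{\mathrm{const}+(a+b-1)\delta+(a-1)u+bv}$, whose total exponent is at least $\epsilon'$ inside the corner region by construction.

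Differentiating $R$ in $\delta$ therefore pulls down at most a bounded integer times $\log t$ from each of the finitely many subleading terms, so $\partial_\delta R = O(t^{\epsilon'}|\log t|)$, and consequently
\[
    \partial_\delta\bigl[\vol(\cPolytope{\delta})-\vol(\Polytope{\delta})\bigr] = \frac{-\partial_\delta R}{(\log t)^2} = O(t^{\epsilon'}/|\log t|) = O(t^\epsilon),
\]
as required. The main technical obstacle is establishing uniformity of the decomposition over the entire range $\delta\leq\epsilon$, where $\partial\cPolytope{\delta}$ may wander arbitrarily far from the origin; this amounts to checking that the combinatorial type of $\Polytope{\delta}$ is constant on the range and that the finitely many subleading monomials in $W_t$ remain subdominant on the moving boundary, both of which follow from the genericity conditions imposed on $\omega$ in Section~\ref{sec:super_superpotential}.
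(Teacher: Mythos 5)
First, a remark on provenance: the paper does not prove this proposition itself; it is quoted from \cite{A-G-I-S_2020}, where the argument is a direct comparison of the level curve $\partial \cPolytope{\delta}$ with the tropical boundary $\partial \Polytope{\delta}$, facet by facet and corner by corner. Your route --- obtaining the boundary estimate as the $\delta$-derivative of a $\delta$-family of area estimates --- is genuinely different. The reduction $\vol_{\affine}(\partial \cPolytope{\delta})-\vol_{\affine}(\partial \Polytope{\delta})=-\partial_\delta\bigl[\vol(\cPolytope{\delta})-\vol(\Polytope{\delta})\bigr]$ is correct by definition, the inclusion $\cPolytope{\delta}\subset\Polytope{\delta}$ holds because all coefficients of $W_t$ are positive on $\Gamma_{\cO}$, and your observation that the $\zeta(2)$-constant of the corner contributions is $\delta$-independent in the shifted coordinates is right.

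The gap is in the final step. Knowing $R(\delta,t)=O(t^\epsilon)$ uniformly gives no control on $\partial_\delta R$; to get it you must actually differentiate the explicit integral formulas from Proposition \ref{prop:difference_of_P_and_cP}. Those formulas involve the implicitly defined points $x_2^{q}(c)$ and $x_2^{q,q'}(c)$ on $\partial\cPolytope{\delta}$ (solutions of $W_t=t^\delta$) and the moving endpoints $C_1(\delta),C_2(\delta)$ of the edge regions; differentiating in $\delta$ produces Leibniz boundary terms and terms involving $\partial_\delta x_2$, none of which your argument addresses. The heuristic that ``each subleading monomial contributes at most a bounded integer times $\log t$'' only accounts for the explicit $\delta$-dependence of the exponents at a fixed point of the $(u,v)$-plane, not for the motion of the curve and of the integration domain. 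Moreover, the claimed uniformity over all $\delta\le\epsilon$ does not follow from the genericity of $\omega$: the edge contribution $\vol(\cB^{q}_{\delta})$ is an integrand of size $O(t^{\epsilon'}/|\log t|)$ integrated over an interval whose length is the affine length of the corresponding facet of $\Polytope{\delta}$, and that length grows linearly in $|\delta|$, so both $R$ and $\partial_\delta R$ acquire a factor $(1+|\delta|)$ that you ignore. (This is harmless in the application, where the estimate is weighted by $e^{-t^\delta}$, but your proof as written claims a uniform bound it does not establish.) To repair the argument, either carry out the differentiation under the integral sign explicitly, tracking $\partial_\delta$ of the implicit parametrizations, or follow the direct length comparison of \cite{A-G-I-S_2020}.
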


Using Proposition \ref{prop:affine_volume_of_boundary_P(delta)}, we have that
\begin{equation}\label{eq:Z_topo(O)_on_P(epsilon)_complement}
    \begin{split}
        \int_{M_{\R}\backslash\cPolytope{\epsilon}}e^{-W_t}dx_1dx_2 = (1+O(t^\epsilon))\int_{\epsilon}^{\infty} -e^{-t^\delta}\vol_{\affine}(\partial \Polytope{\delta})d\delta.
    \end{split}
\end{equation}

\subsection{Conclusion}
We need to compare $Z_B(\Gamma_{\cO})$ with $Z_{\topo}(\cO)$ as in Conjecture \ref{conj:main}. Using \begin{equation}\label{eq:Gamma_class}
    \hat \Gamma_{Y} =\exp\left(-\gamma c_1(Y)+\sum_{k=2}^{\infty}\zeta(k)(k-1)!\ch_k(TY)\right),
\end{equation} we have that
\begin{equation}\label{eq:A(O)}
    \begin{split}
    Z_{\topo}(\cO) =& \frac{1}{2}\omega^2(\log t)^2 + \gamma\omega\cdot c_1(Y)\log t + \zeta(2)\ch_2(TY)+\frac{1}{2}\gamma^2c_1(Y)^2\\
    =& \frac{1}{2}\omega^2(\log t)^2 + \gamma\omega\cdot c_1(Y)\log t + \zeta(2)\sum\limits_{i=1}^n\left(\frac{1}{2}c_1(Y)^2-c_2(Y)\right) + \frac{1}{2}\gamma^2c_1(Y)^2.
    \end{split}
\end{equation}
This can be interpreted using the combinatorial information of $\Polytope{\delta}$.

\begin{lemma}\label{lemma:combinatorial_information_of_polytope}
For $\delta \leq \epsilon$, the volume of $\Polytope{\delta}$ is
\begin{align*}
    V(\delta):=\vol(\Polytope{\delta}) = V(0)+\delta \frac{\partial V}{\partial\delta}(0) +\frac{1}{2}\delta^2 \frac{\partial^2 V}{\partial\delta^2}(0),\\
    \text{with }V(0) = \frac{1}{2}\omega^2, \frac{\partial V}{\partial\delta}(0) = -\omega\cdot c_1(Y), \frac{\partial^2 V}{\partial\delta^2}(0) = c_1(Y)^2.
\end{align*}
\end{lemma}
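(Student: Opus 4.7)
The plan is to prove the stronger polynomial identity
\begin{equation*}
V(\delta) \;=\; \tfrac{1}{2}\bigl(\omega - \delta\, c_1(Y)\bigr)^2
\end{equation*}
as a quadratic polynomial in $\delta$ on a neighborhood of $0$, and then read off the three asserted formulas by Taylor-expanding the right hand side. The key reduction is that since $c_1(Y) = -K_Y = \sum_i D_i$, the shifted class
\begin{equation*}
\omega_\delta \;:=\; \omega - \delta\, c_1(Y) \;=\; \sum_i (\lambda_i - \delta)\,D_i \;+\; \sum_{i,j} c_{ij}\,E_{ij}
\end{equation*}
still satisfies conditions (\ref{eq:ample-con-1})--(\ref{eq:ample-con-5}) for $|\delta|$ small enough, and the formulas for $\beta_i,\beta_{ij}$ give $\beta_v^{\omega_\delta} = \beta_v^\omega - \delta$ uniformly in $v$; hence $\Polytope{\delta}$ built from $\omega$ is literally $\Polytope{0}$ built from $\omega_\delta$. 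So the master identity reduces to its specialization at $\delta = 0$, namely $V(0) = \vol(\Xi) = \tfrac{1}{2}\omega^2$.

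For this core identity I would decompose $\Xi$ against the toric moment polytope $\widetilde{\Xi}$ of $(\bar Y, \bar\omega)$ with $\bar\omega := \sum_i \lambda_i \bar D_i$. By construction $\widetilde{\Xi}$ is cut out only by the $\beta_i \ge 0$, and $\Xi$ is obtained from $\widetilde{\Xi}$ by truncating each toric corner $\bar D_{i-1}\cap \bar D_i$ via the $l_i$ additional facets $F_{ij}$. The standard toric dictionary immediately gives $\vol(\widetilde{\Xi}) = \tfrac{1}{2}\bar\omega^2$.

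The substantive computation is the volume of each corner truncation. At corner $i$ I would introduce local affine coordinates $(u,v) := (\beta_{i-1}, \beta_i)$; these are honest coordinates with $du\wedge dv = dx_1\wedge dx_2$ because smoothness of the toric model makes $\{n_{i-1}, n_i\}$ a $\Z$-basis of $N$. After relabeling so that $\epsilon_{i,1} > \epsilon_{i,2} > \cdots > \epsilon_{i,l_i}$, where $\epsilon_{ij} := \lambda_i - c_{ij}$, the truncated region is bounded by $u = 0$, $v = 0$, and the staircase of segments on the lines $F_{ij}\colon u + jv = S_j$ with $S_j := \sum_{k=1}^j \epsilon_{ik}$. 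Slicing horizontally in $v$ and setting $\epsilon_{i,l_i+1} := 0$, the area is
\begin{equation*}
\vol(\mathrm{chop}_i) \;=\; \sum_{j=1}^{l_i}\int_{\epsilon_{i,j+1}}^{\epsilon_{ij}}\bigl(S_j - jv\bigr)\, dv,
\end{equation*}
which after telescoping simplifies to $\tfrac{1}{2}\sum_{j=1}^{l_i}\epsilon_{ij}^2$.

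The remaining step is purely intersection-theoretic on $Y$. Using $D_i = p^*\bar D_i - \sum_j E_{ij}$, $E_{ij}^2 = -1$, $D_i\cdot E_{ij} = 1$, and the fact that the non-toric blowups occur in the smooth locus of $\bar D$, a direct expansion of $\omega\cdot \omega$ gives $\omega^2 = \bar\omega^2 - \sum_{i,j}(\lambda_i - c_{ij})^2$. Subtracting the sum of corner-chop contributions from $\vol(\widetilde{\Xi})$ then yields $\vol(\Xi) = \tfrac{1}{2}\omega^2$, completing the core identity; the master polynomial identity follows from the reduction, and its Taylor expansion at $\delta = 0$ produces $V(0) = \tfrac{1}{2}\omega^2$, $V'(0) = -\omega\cdot c_1(Y)$, $V''(0) = c_1(Y)^2$. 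I expect the hardest step to be the corner chop-off computation: one must justify the WLOG ordering of the $\epsilon_{ij}$, use condition (\ref{eq:ample-con-5}) to verify that each $F_{ij}$ is a genuine facet of $\Xi$ whose truncation lives entirely within its corner of $\widetilde{\Xi}$, and then carry out the telescoping algebra to obtain the clean quadratic form $\tfrac{1}{2}\sum_j \epsilon_{ij}^2$.
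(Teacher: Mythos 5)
Your proposal is correct and follows essentially the same route as the paper's proof: both compare $\Polytope{\delta}$ with the toric moment polytope of $\sum_i(\lambda_i-\delta)\bar D_i$ (whose volume is $\tfrac12(\omega_\delta')^2$ by the standard toric/Duistermaat--Heckman formula) and identify the corner truncations with the correction terms $\tfrac12\sum_{i,j}(\lambda_i-c_{ij}-\delta)^2$ supplied by the intersection theory of the blowup. The only differences are in packaging: the paper carries $\delta$ throughout and tiles each corner chop by the triangles $\triangle_{ij}$ of area $\tfrac12(\lambda_i-c_{ij}-\delta)^2$, whereas you first reduce to $\delta=0$ via $\Polytope{\delta}(\omega)=\Xi(\omega-\delta c_1(Y))$ and then compute each chop by slicing and telescoping.
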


\begin{proof}
    It is sufficient to show that $V(\delta) = \frac{1}{2}(\omega-\delta c_1(Y))^2$. Let $\omega_\delta' = \sum_{i=1}^n(\lambda_i-\delta)D_i'$ and $\Polytope{\delta}' = \{x\mid\beta_i(x)\geq\delta,\forall i \in\{1,\dots,n\}\}$. Then 
    \begin{align*}
        \vol(\Polytope{\delta}') = \frac{1}{2}{\omega'_\delta}^2
    \end{align*}
    according to Duistermaat–Heckman theorem. Let $\triangle_{ij}$ be the triangle bounded by $\beta_{i}(x)=\delta,\beta_{i(j-1)}(x)=\delta,\beta_{ij}(x)=\delta$, where we set $\beta_{i 0}(x)$ to be $\beta_{i-1}(x)$. Then 
    \begin{align*}
        \Polytope{\delta}' = \Polytope{\delta}\cup\bigcup_{i=1}^n\bigcup_{j=1}^{l_i}\triangle_{ij}.
    \end{align*}
    So we have that 
    \begin{equation*}
        \vol(\Polytope{\delta}') = \vol(\Polytope{\delta})+\sum_{i=1}^n\sum_{j=1}^{l_i}\vol(\triangle_{ij}).
    \end{equation*}
    Note that 
    \begin{align*}
        \frac{1}{2}{\omega'_\delta}^2=\frac{1}{2}(\omega-\delta c_1(Y))^2+\frac{1}{2}\sum_{i=1}^n\sum_{j=1}^{l_i}(\lambda_i-c_{ij}-\delta)^2,
    \end{align*}
    so we have $\vol(\Polytope{\delta}') = \frac{1}{2}(\omega-\delta c_1(Y))^2$ since $\vol(\triangle_{ij}) = \frac{1}{2}(\lambda_i-c_{ij}-\delta)^2$.
\end{proof}

Using Lemma \ref{lemma:combinatorial_information_of_polytope}, we have that
\begin{equation}\label{eq:ZA(O)}
    Z_{\topo}(\cO) = V(0)(\log t)^2-\gamma \frac{\partial V}{\partial\delta}(0)\log t +\frac{1}{2}(\gamma^2+\zeta(2))\frac{\partial^2 V}{\partial\delta^2}(0) - (n+\sum\limits_{i=1}^nl_i)\zeta(2),
\end{equation}
where we use the fact that $c_2(Y) = c_2(\bar{Y})+\sum_{i=1}^nl_i$ and $\Bar{Y}$ is smooth.

For $Z_B(\Gamma_{\cO})$, using (\ref{eq:Z_topo(O)_on_P(epsilon)}) and (\ref{eq:Z_topo(O)_on_P(epsilon)_complement}), we have that
\begin{equation}\label{eq:Z_topo(Gamma_O)}
    \begin{split}
        &Z_B(\Gamma_{\cO})\\ 
        =& (\log t)^2\vol(\Polytope{\epsilon})-(n+\sum_{i=1}^{n}l_i)\zeta(2)+O(t^\epsilon)+(1+O(t^\epsilon))(\log t)^2\int_{\epsilon}^{\infty} -e^{-t^\delta}\vol_{\affine}(\partial \Polytope{\delta})d\delta\\
        =& V(\epsilon)(\log t)^2+(1+O(t^\epsilon))(\log t)^2\int_{\epsilon}^{-\infty} e^{-t^\delta}\frac{\partial V}{\partial\delta}(\delta)d\delta-(n+\sum_{i=1}^{n}l_i)\zeta(2)+O(t^\epsilon).
    \end{split}
\end{equation}
Since
\begin{align*}
    \log t\int_{\epsilon}^{-\infty}e^{-t^\delta}d\delta =& -\gamma-\epsilon\log t+O(t^\epsilon),\\
    (\log t)^2\int_{\epsilon}^{-\infty}e^{-t^\delta}\delta d\delta =& \frac{1}{2}\gamma^2+\frac{1}{2}\zeta(2)-\frac{1}{2}\epsilon^2(\log t)^2+O(t^\epsilon),
\end{align*}
we have $Z_B(\Gamma_{\cO}) = Z_{\topo}(\cO)+O(t^\epsilon)$, which proves Conjecture \ref{conj:main} for the case of structure sheaf.

\section{Mirror Cycle Construction}

\label{sec:mirror_cycle}

In this section we construct a $2$-cycle $\Gamma_{\cO(L)}$ in $\cX_{t}$ which is mirror to a line bundle $\cO(L)$ over the del Pezzo surface $(Y,D)$. Recall that $(Y,D)$ admits a toric model. We call $L$ \textit{toric} if it is the pullback of a linear combination of toric divisors, i.e., $L = \sum_{i=1}^na_iD_i'$ with $D_i'=p^{*}(\Bar{D}_i)$ and $a_i\in\Z$. We call $L$ \textit{exceptional} if it is a linear combination of exceptional divisors, i.e., $L = \sum_{i=1}^{n}\sum_{j=1}^{l_i}b_{ij}E_{ij}$ with $b_{ij}\in\Z$. The construction then depends on the type of $L$.

We need some preparations before the construction. Let $\pi:M_{\R} \times M_{\R} \rightarrow U_{\sigma_0}, (x,x') \mapsto \exp(\log t\cdot x+2\pi\sqrt{-1}x')$ be the universal cover of $U_{\sigma_0}$ and $\Polytope{\epsilon'}\subset M_{\R}$ be the polytope determined by $\beta_v(x) = \epsilon'$, $v\in V=\{1,\dots,n\}\cup\bigcup_{i=1}^n\{i1,i2,\dots,il_i\}$ as in (\ref{eq:polytope_P}). For each $n_i$, let $F_i$ be the face of $\Polytope{\epsilon'}$ which is determined by $\beta_i(x) = \epsilon'$ and $R_i=\{x\in M_{\R}\backslash \Polytope{\epsilon'} \mid \beta_i(x) < \beta_v(x)-\epsilon'\text{ for }v\in V\backslash\{i\}\}$ be an unbounded region in $M_{\R}\backslash\Polytope{\epsilon'}$. We denote $\nDual{i}\in M$ by the identification $\langle \nDual{i},- \rangle = n_i\wedge -$, where the affine volume on the right-hand side is given by the lattice $N\in N_\bR$.

\subsection{Toric divisor}
Suppose $L = \sum_{i=1}^na_iD_i'$, and let $\varphi_L:N_{\R}\rightarrow\R$ be the piecewise linear function induced by $\varphi_L(n_i)=a_i$. Let $\sigma_{i,i+1}$ be the cone between $\R_{\geq 0}\cdot n_i$ and $\R_{\geq 0}\cdot n_{i+1}$, $\varphi_{L,i,i+1}$ be the linear function extended from $\varphi_L|_{\sigma_{i,i+1}}$. Then for $n\in N_{\R}$, we have $\varphi_{L,i,i+1}(n)-\varphi_{L,i-1,i}(n) = \langle \kappa_i \nDual{i}, n\rangle$ for some constant $\kappa_i$. We call $\kappa_i$ the \textit{kink} of $\varphi_L$ along the ray $\R_{\geq 0}\cdot n_i$. Since $\Bar{Y}$ is smooth in the toric model $p:Y\rightarrow\Bar{Y}$, we have 
\begin{align*}
    \kappa_i =& \varphi_{L,i,i+1}(n_{i+1})+\varphi_{L,i-1,i}(-n_{i+1})\\
    =& a_{i+1}+\varphi_{L,i-1,i}(\Bar{D}_i^2n_i+n_{i-1})\\
    =& a_{i+1}+\Bar{D}_i^2a_i+a_{i-1}\\
    =& D_i'\cdot L,
\end{align*}
where we use $n_{i-1}+D_i^2n_i+n_{i+1} = 0$ for the second equation. So we have $\sum_{i=1}^{n}\kappa_i \nDual{i}=0$ and there is a twisted polytope $\sigma_L$ in $M_{\R}$ with integer vertices $v_{L,1}, v_{L,2}, \dots, v_{L,n}$ such that $v_{L,i+1}-v_{L,i}=\kappa_i\nDual{i}$ in the sense of \cite{K-T_1993}.

The mirror cycle will then be constructed using the information of $\varphi_L$  and a choice of a path $\Path_i$ in $M_{\R}$. Let $\Path_i:[0,\infty)\rightarrow M_{\R}$ be a smooth path such that
\begin{itemize}
    \item $\Path_i(0) = 0$, $\Path_i(1)\in F_i$ and $\Path_i(s) \in \Polytope{\epsilon'} $ for $s\in[0,1]$.
    \item $\Path_i(s) \in R_i$ for $s>1$ and $\lim\limits_{s\rightarrow\infty}\beta_i\left(\Path_i(s)\right)=-\infty$.
\end{itemize}
We also denote $\Path_i|_{[0,1]}$ and $\Path_i|_{(1,\infty)}$ by $\Path_{i,\In}$ and $\Path_{i,\Out}$ respectively. Then the mirror cycle is 

\begin{equation}
    \Gamma_{\cO(L)} = \Gamma_{\cO} + \sum\limits_{i=1}^{n}\Gamma_{i} + \Gamma_{\Center},
\end{equation}
where
\begin{align*}
    &\Gamma_{i} = \kappa_i \pi(\Path_i \times \nDual{i})\subset U_{\sigma_0}\subset \cX_{t},\\
    &\Gamma_{\Center} = \pi(\{0\}\times \sigma_L)\subset U_{\sigma_0}\subset \cX_{t},
\end{align*}
and we regard them as $2$-chains in $ \cX_{t}$. Here the twisted polytope $\sigma_L$ as a chain comes equipped with degrees on each of its supporting convex polytope. When $L$ is ample the twisted polytope is a convex polytope with degree $1$.

\begin{remark}
If we choose an isomorphism $\ast:N_{\R}\rightarrow M_{\R}$ which is induced by an inner product on $N_{\R}$ and replace $\Gamma_i$ with $\kappa_i \pi(\R_{\leq 0}\cdot n_i^\Dual \times \nDual{i})$, then it would be the characteristic cycle of a constructible sheaf mirror to $\cO(L)$ as described in \cite{F-L-T-Z_2014}. The reason why we do not explicitly use the characteristic cycle is that we are dealing with the oscillatory integrals so the actual symplectic structure in the Landau-Ginzburg model does not matter. 
\end{remark}

\subsection{Exceptional divisor}\label{sec:mirror_cycle_exceptional}
Firstly, for each $n_i$, let $\fd_{i1},\fd_{i2},\dots,\fd_{il_i}$ be the initial walls in the scattering diagram $\bar{\D}'$ which are parallel to $n_i$ and they are ordered such that $\fd_{i(j+1)}$ is on the right side of $\fd_{ij}$. Then for each $\fd_{ij}$, there is a local chart 
$U_{\fd_{ij}}$ of $\cX_{t}$ which is 
\begin{align*}
    U_{\fd_{ij}} = \{(u,v,w)\in\C^2\times\Spec\C[\Lambda_{\mathfrak{d}_{ij}}]\mid uv = H_{ij}(w)\},
\end{align*}
where $\Lambda_{\mathfrak{d}_{ij}} = N \cap \R\cdot n_i$ is a sublattice of $N$ and $H_{ij}(w)\in\C[\Lambda_{\mathfrak{d}_{ij}}]$ is given by $H_{ij}(w) = \prod_{q=1}^{j}f_{\fd_{iq}}|_{\cX_t} = \prod_{q=1}^{j}(1+t^{c_{iq}}w)$. 

We have $U_{\sigma_0}\hookrightarrow U_{\fd_{ij}}$ which is induced by $u\mapsto z^{\gamma_i}, w\mapsto z^{n_i}$, where $\gamma_i$ is a primitive integer vector in $N$ such that $n_i\wedge\gamma_i = 1$. Note that $U_{\fd_{ij}} \backslash U_{\sigma_0} = \bigcup_{q=1}^{j}\{(0,v,w) \mid v\in\C, w =-t^{-c_{iq}}\}$ is a disjoint union of $j$ copies of $\C$. The mirror cycle $\Gamma_{\cO(L)}$ will then be constructed in $U_{\fd_{ij}}$.

Suppose $L = bE_{ij}$ with $b\in\Z$. For each $k\in\{1,\dots,|b|\}$, let us choose a smooth path $p_{k}(s), s\in[0,1]$ in $\Spec\C[\Lambda_{\mathfrak{d}_{ij}}] \cong \C^*$ such that 
\begin{itemize}
    \item $p_{k}(0)=t^{-c_{ij}}$ and $p_{k}(1)=-t^{-c_{ij}}$.
    \item $p_{k}(s)\neq-t^{-c_{iq}}$ for $s\in[0,1)$ and $q\in\{1,\dots,j\}$.
    \item The argument of $p_{k}(s)$ is changed by $(1-2k)\pi$.
    \item $t^{-A(\epsilon-\epsilon')} < \|p_{k}(s)\|t^{c_{ij}} < t^{A(\epsilon-\epsilon')}$.
\end{itemize}
The last condition is for the purpose of the estimate of $W_t$ on the mirror cycle in Lemma \ref{lemma:Gamma_sing_ij_monomial_small}, where $A$ is a positive real number such that
\[
    0<A<\min_{\{l_1\in\Z_{>0},l_2\in\Z_{\neq 0}\}}\big\{\frac{l_1}{\left|l_2\right|} \mid \text{ There is a } z^{l_2n_i}\text{-monomial in }(W_t)^{l_1}.\big\}
\]
Such $A$ exists because there are finite monomials in $W_t$, thus there is an upper bound for $\frac{|\Proj_{n_i}n|}{|n_i|}$ for a $z^n$-monomial in $W_t$, where $\Proj_{n_i}$ is taken with respect to an arbitrary inner product on $N_{\R}$. Figure \ref{fig:path_in_C^*} shows an example of $p_k(s)$ when $k=2$.

\begin{figure}
    \centering
    \includesvg[width = 0.5\textwidth]{Figures/Path_in_C_star.svg}
    \caption{A smooth path in $\Spec\C[\Lambda_{\mathfrak{d}_{ij}}]$ for the construction of $\Gamma_{\cO(2E_{ij})}$}
    \label{fig:path_in_C^*}
\end{figure}

The mirror cycle will then be constructed using $p_k(s)$ and a choice of path $\Path_{ij}$ in $M_{\R}$ as follows. Let $\fb_{ij}$ be the line in $M_{\R}$ determined by $\langle x,n_i \rangle+c_{ij} = 0$, then $\fb_{ij}$ has affine distance $c_{ij}$ to the origin and it passes through the vertex $v_{ij}$ of $\Polytope{\epsilon'}$ which is determined by $\beta_{i-1}(x) = \beta_{ij}(x) = 0$. Then let us choose a smooth path $\Path_{ij}:[0,\infty)\rightarrow M_{\R}$ such that
\begin{itemize}
    \item $\Path_{ij}(0)\in\fb_{ij}$, $\Path_{ij}(1) \in F_i$ and $\Path_{ij}(s)\in \Polytope{\epsilon'}$ for $s\in[0,1]$.
    \item $\Path_{ij}(s)\in R_{i}$ for $s>1$ and $\lim\limits_{s\rightarrow\infty}\beta_i\left(\Path_i(s)\right)=-\infty$.
\end{itemize}
We also denote $\Path_{ij}(0),\Path_{ij}|_{[0,1]}$ and $\Path_{ij}|_{(1,\infty)}$ by $x_{ij}, \Path_{ij,\In}$ and $\Path_{ij,\Out}$ respectively. Then the mirror cycle is 

\begin{equation}\label{eq:mirror_cycle_except}
    \Gamma_{\cO(L)} = \Gamma_{\cO} + \Gamma_{ij} + \Gamma_{\sing,ij},
\end{equation}
where
\begin{align*} 
    \Gamma_{ij} &= b\pi(\Path_{ij}\times \nDual{i})\subset U_{\sigma_0}\subset \cX_{t},\\
    \Gamma_{\sing,ij} &= \sign(b)\sum\limits_{k=1}^{|b|}\Gamma_{\sing,ij,k} \text{ with }\\
    \Gamma_{\sing,ij,k} &= \big\{w = p_{k}(s),u=\|H_{ij}(p_{k}(s))\|^{s/2}\cdot \exp\left( (2\pi\sqrt{-1})\tau \right),\\
    &v=u^{-1}H_{ij}(p_{k}(s)) \mid  (s,\tau)\in[0,1]^2 \big\}\subset U_{\fd_{ij}}\subset \cX_{t},
\end{align*}
and we regard them as $2$-chains in $ \cX_{t}$. Note that for each $\Gamma_{\sing,ij,k}$, it is equal to $\pi(\{x_{ij}\}\times \nDual{i})$ when $s=0$ and it is a single point $(u,v,w) = (0,0,-t^{-c_{ij}})$ when $s=1$, which is the only point not contained in $U_{\sigma_0}$. The choice of the norm $\|u\| = \|H_{ij}(p_{k}(s))\|^{s/2}$ is to guarantee the continuity at $(u,v,w) = (0,0,-t^{-c_{ij}})$. The choice is not essential, i.e., any other choice that guarantees continuity could work as it does not affect the result of the oscillatory integral on the cycle.

\subsection{Conclusion}
For a general divisor $L=L_{\toric}+L_{\Except}$ with $L_{\toric}$ toric and $L_{\Except} = \sum_{i=1}^n\sum_{j=1}^{l_i} b_{ij}E_{ij}$ exceptional, the mirror cycle is
\begin{equation}\label{eq:mirror_cycle_of_a_general_divisor}
    \Gamma_{\cO(L)} = \Gamma_{\cO} + (\Gamma_{\cO(L_{\toric})} - \Gamma_{\cO}) + \sum_{i=1}^n\sum_{j=1}^{l_i} (\Gamma_{\cO(b_{ij}E_{ij})} -\Gamma_{\cO}). 
\end{equation} 

This assignment gives a morphism $\alpha:K(Y)\to H_2(\cX_{t}; W_t^{-1}(+\infty))$ by $\cO(L)\mapsto \Gamma_{\cO(L)}$. We expect it to be an isomorphism, and the product structure in the K-group should match the convolution production in the fiber direction.
 
\begin{remark}
Our construction of mirror cycles should be related to the construction in \cite{Hacking-Keating_2022}.    
\end{remark}

We elaborate on the images of different pieces of $\Gamma_{\cO(L)}$ under $\Log_t:U_{\sigma_0}\rightarrow M_{\R}$, as we need to use them to evaluate the oscillatory integrals in Section \ref{sec:anybundle}. We have that
\begin{align*}
    &\Log_t(\Gamma_{\cO}\cap U_{\sigma_0}) = M_{\R},\\
    &\Log_t(\Gamma_i) = \Path_i \text{ when }\Gamma_i\neq\emptyset,\\
    &\Log_t(\Gamma_{\Center}) = 0,\\
    &\Log_t(\Gamma_{ij}) = \Path_{ij},\\
    &\Log_t(\Gamma_{\sing,ij,k}\cap U_{\sigma_0}) =: \Path_{\sing,ij,k},
\end{align*}
The first four are clear from the constructions, and for $\Path_{\sing,ij,k}$, it is a curve depending on the choice of $p_k(s)$. It starts from $x_{ij}$, converges to $\fb_{ij}$ near infinity and lives in a small neighborhood of $\fb_{ij}$. Note that the only point in $\Gamma_{\sing,ij,k}$ which is not contained in $U_{\sigma_0}$ is $(u,v,w) = (0,0,-t^{-c_{ij}})$ in $U_{\fd_{ij}}$, and we can regard its $\Log_t$-image as the infinity point along $\fb_{ij}$.

\begin{example}\label{example_dP5_mirror_cycles}
\begin{figure}
    \centering
    \includesvg[width = 0.3\textwidth]{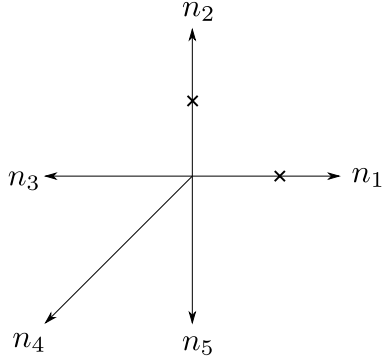}
    \caption{The toric model of $dP5$}
    \label{fig:example_dP5_toric_model}
\end{figure}
\begin{figure}
    \centering
    \includesvg[width = 1.0\textwidth]{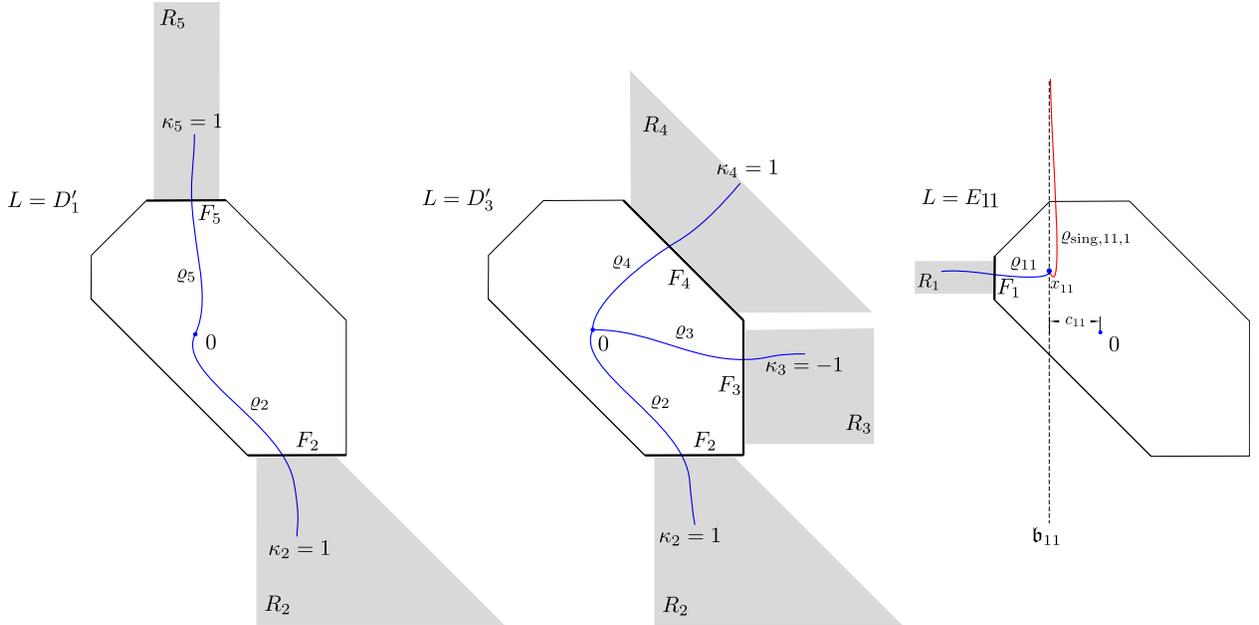}
    \caption{$\Log_t$-image of $\Gamma_{\cO(L)}$ for $L=D_1',D_3',E_{11}$ in a del Pezzo 5 surface}
    \label{fig:example_dP5_mirror_cycle}
\end{figure}
Let $(Y,D)$ be a del Pezzo $5$ surface which admits a toric model as shown in Figure \ref{fig:example_dP5_toric_model}. Let $D_i',i=1,2,\dots,5$ be the total transform of $\bar{D}_i$ and $E_{11}, E_{21}$ be the exceptional divisors blown up on $\bar{D}_1,\bar{D}_2$. Then the $\Log_t$-images of different pieces of the mirror cycles $\Gamma_{\cO(L)}$ for $L=D_1',D_3',E_{11}$ are indicated by Figure \ref{fig:example_dP5_mirror_cycle}, where $F_i$ denotes the face of $\Polytope{\epsilon'}$ determined by $\beta_{i} = \epsilon'$. Note that for the total transforms $D_1'$ and $D_3'$, $\Gamma_i$ is determined by $\kappa_i$, which is equal to its intersection number with $D_i'$. For the exceptional divisor $E_{11}$, the $\Log_t$-images of $\Gamma_{11}$ and $\Gamma_{\sing,11,1}$ are the blue and red curves. 

\end{example}

\section{Oscillatory Integrals on the Mirror Cycles}

\label{sec:anybundle}

In this section we compute the oscillatory integral $Z_B(\Gamma_{\cO(L)})$ on the mirror cycle $\Gamma_{\cO(L)}$. 
The computation can be restricted to $U_{\sigma_0}$. So we consider the torus fibration $\Log_t:U_{\sigma_0}\rightarrow M_{\R}$ and use the $\Log_t$-images of pieces of $ \Gamma_{\cO(L)}$ to evaluate the oscillatory integral on each piece. Since we have done it for $\Gamma_{\cO}$, it is sufficient to do it for $\Gamma_i,\Gamma_{\Center},\Gamma_{ij}$ and $\Gamma_{\sing,ij,k}$. We will use $\Omega|_{U_{\sigma_0}} = d\log z_1\wedge d\log z_2 = d\log(z^{n_i})\wedge d\log(z^{n'})$ for any $n'\in N$ such that $n_i\wedge n' = 1$. In particular, if we represent $\Omega$ using the coordinates $(u,v,w)$ of $U_{\fd_{ij}}$, we have $\Omega|_{U_{\sigma_0}} = d\log w\wedge d\log u$.

\subsection{Oscillatory integral on $\Gamma_i$, $\Gamma_{ij}$ and $\Gamma_{\mathrm{center}}$.}

Let us set 
\begin{align*}
    \Gamma_{v,\In} =& \Gamma_v\cap\Log_t^{-1}(\Path_{v,\In}),\Gamma_{v,\Out} = \Gamma_v\cap\Log_t^{-1}(\Path_{v,\Out})
\end{align*}
for $v\in V=\{1,\dots,n\}\cup\bigcup_{i=1}^n\{i1,i2,\dots,il_i\}$.
Then 
\begin{equation*}
    \int_{\Gamma_{v}}e^{-W_t}\Omega = \int_{\Gamma_{v,\In}}e^{-W_t}\Omega + \int_{\Gamma_{v,\Out}}e^{-W_t}\Omega.
\end{equation*}

We need to evaluate $W_t$ for $z\in\Gamma_{v,\In}$ and $z\in\Gamma_{v,\Out}$.

\begin{lemma}\label{lemma:Gamma_O_L_W_approximation}
For $v=i\text{ or }ij$, if we write $W_t|_{U_{\sigma_0}}$ as $W_t(z) = t^{\lambda_i}z^{n_i}(1+h(z))$, then 
    \begin{align*}
        \|W_t(z)\| =& O(t^{\epsilon'})\text{ for }z\in\Gamma_{v,\In},\\
        \|h(z)\| =& O(t^{\epsilon'})\text{ for }z\in\Gamma_{v,\Out}.
    \end{align*}
\end{lemma}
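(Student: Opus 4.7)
The plan rests on two ingredients: the tropical decomposition of $M_{\R}$ by the affine functions $\{\beta_v\}_{v\in V}$, and a structural simplification coming from the choice of $x'$-direction $\nDual{i}$ in the definitions of $\Gamma_i$ and $\Gamma_{ij}$. The key preliminary observation is that since $\langle \nDual{i}, n_i\rangle = n_i\wedge n_i = 0$, the monomial $z^{n_i}$ is positive real on all of $\Gamma_v$, so $t^{\lambda_i}z^{n_i}$ coincides with $t^{\beta_i(x)}$ identically on $\Gamma_v$ where $x=\Log_t(z)$. This is what makes the asserted factorization $W_t(z) = t^{\lambda_i}z^{n_i}(1+h(z))$ well-posed as a comparison against a distinguished, real reference summand.

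For the first bound, I would use that $\Path_{v,\In}\subset \Polytope{\epsilon'}$ by construction, so $\beta_w(x)\geq \epsilon'$ for every $w\in V$. Hence each $V$-monomial of $W_t$ has absolute value at most $t^{\epsilon'}$ on $\Gamma_{v,\In}$. For each correction monomial $t^{\alpha(x)}$ of $W_t-W_t^*$, the third bullet in the setup of Section \ref{sec:structuresheaf_decomposition} gives some $w\in V$ with $t^{\alpha(x)} < t^{\epsilon'}t^{\beta_w(x)} \leq t^{2\epsilon'}$. Since $W_t$ is a finite sum of monomials, summing yields $\|W_t(z)\| = O(t^{\epsilon'})$.

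For the second bound, I would use that $\Path_{v,\Out}\subset R_i$, which gives $\beta_w(x)-\beta_i(x)\geq \epsilon'$ for all $w\in V\setminus\{i\}$. Dividing each non-distinguished $V$-monomial of $W_t$ by $t^{\lambda_i}z^{n_i} = t^{\beta_i(x)}$ produces a contribution of magnitude at most $t^{\epsilon'}$. For a correction monomial $t^{\alpha(x)}$, I apply the third bullet: if the dominating index $w$ equals $i$ we obtain $t^{\alpha(x)-\beta_i(x)} < t^{\epsilon'}$ directly, while if $w\neq i$ we combine with the $R_i$-inequality to gain $t^{\alpha(x)-\beta_i(x)} < t^{2\epsilon'}$. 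Summing the finitely many monomials of $h(z)$ yields $\|h(z)\| = O(t^{\epsilon'})$.

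The argument is essentially finite bookkeeping, so there is no serious obstacle. The most delicate step is the positive-reality of $z^{n_i}$ on $\Gamma_v$, without which writing $W_t$ in the form $t^{\lambda_i}z^{n_i}(1+h(z))$ with a real reference term would be ambiguous; phase factors of the remaining monomials do not enter the estimates since only absolute values are involved. One should also note that the same argument applies unchanged on $\Gamma_{ij,\In}$ and $\Gamma_{ij,\Out}$ because the path $\Path_{ij}$ was arranged to satisfy exactly the same containment conditions as $\Path_i$ relative to $\Polytope{\epsilon'}$ and $R_i$.
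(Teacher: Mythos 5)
Your proposal is correct and follows essentially the same route as the paper: decompose $W_t$ into the monomials $h_{v'}$ indexed by $V$ plus the correction monomials of $W_t-W_t^*$, bound the former via $\Path_{v,\In}\subset\Polytope{\epsilon'}$ (resp.\ $\Path_{v,\Out}\subset R_i$), and absorb the latter using the third bullet condition on $\epsilon'$. Your preliminary remark that $z^{n_i}$ is positive real on $\Gamma_v$ is a true and harmless addition (and relevant to the later integral computations), but as you note it is not needed for the lemma itself since only absolute values enter.
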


\begin{proof}
    Let us write $W_t|_{U_{\sigma_0}}$ as 
    \[
        W_t(z)=\sum_{v'\in V} h_{v'}(z)+\sum_{\alpha}h_\alpha(z),
    \]
    where $h_i(z) = t^{\lambda_i}z^{n_i}$, $h_{ij}(z)=t^{\lambda_{i-1}+\sum_{k=1}^jc_{ik}}z^{n_{i-1}+jn_i}$ and $\sum_{\alpha}h_\alpha(z) = (W_t-W_t^*)(z)$ with each $h_{\alpha}$ a monomial in $W_t-W_t^*$. Then for $z\in U_{\sigma_0}$, we have $\|h_{v'}(z)\| = t^{\beta_{v'}(\Log_t(z))}$ and $\|h_\alpha(z)\|=t^{\alpha(\Log_t(z))}$ with $\alpha(x)$ given in (\ref{eq:W_t_written_in_monomials}). 
    
    For each $h_{\alpha}$, there exists some $h_{v'}$ such that $\|h_{\alpha}(z)\| < t^{\epsilon'}\|h_{v'}(z)\|$ for $z\in U_{\sigma_0}$ due to the choice of $\epsilon'$ in Section \ref{sec:structuresheaf_decomposition}. So to prove the lemma, it is sufficient to show that $\|h_{v'}(z)\| = O(t^{\epsilon'})$ for $v'\in V$ and $z\in\Gamma_{v,\In}$, and $\|h_{v'}(z)h_i^{-1}(z)\| = O(t^{\epsilon'})$ for $v'\in V\backslash\{i\}$ and $z\in\Gamma_{v,\Out}$.
    
    Now for $z\in\Gamma_{v,\In}$, since $\Log_t(z)\in\Path_{v,\In}\subset\Polytope{\epsilon'}$, we have $\|h_{v'}(z)\| = t^{\beta_{v'}(\Log_t(z))} \leq t^{\epsilon'}$. For $z\in\Gamma_{v,\Out}$, since $\Log_t(z)\in \Path_{v,\Out}\subset R_i$, we have $\|h_{v'}(z)h_i^{-1}(z)\| = t^{\beta_{v'}(\Log_t(z)) - \beta_{i}(\Log_t(z))} < t^{\epsilon'}$.
\end{proof}
Using Lemma \ref{lemma:Gamma_O_L_W_approximation}, we have
\begin{align*}
    \int_{\Gamma_{v,\In}}e^{-W_t}\Omega =& \int_{\Gamma_{v,\In}}\left(1+(e^{-W_t}-1)\right)d\log(z^{n_i})\wedge d\log(z^{n'})\\
    =& (2\pi\sqrt{-1}\log t)a_v\int_{x\in\Path_{v,\In}} d\langle x,n_i\rangle + O(t^{\epsilon'}),\\
    \int_{\Gamma_{v,\Out}}e^{-W_t}\Omega =& \int_{\Gamma_{v,\Out}}e^{-t^{\lambda_i}z^{n_i}(1+h(z))}d\log(z^{n_i})\wedge d\log(z^{n'})\\
    =& (2\pi\sqrt{-1})a_v\int_{x\in\Path_{v,\Out}}e^{-t^{\beta_i(x)}(1+O(t^{\epsilon'}))} d\log(t^{\beta_i(x)}).
\end{align*}
where $n'\in N$ is any vector such that $n_i\wedge n'=1$, $a_v = \kappa_i$ for $v=i$ and $a_v = b_{ij}$ for $v=ij$. So it is sufficient to evaluate $\int_{x\in\Path_{v,\In}} d\langle x,n_i\rangle$ and $\int_{x\in\Path_{v,\Out}}e^{-t^{\beta_i(x)}(1+O(t^{\epsilon'}))} d\log(t^{\beta_i(x)})$ for $v=i$ or $ij$.

\begin{lemma}\label{lemma:Gamma_O_L_combinatoric_information}
    We have
    \begin{align*}
        \int_{x\in\Path_{i,\In}} d\langle x,n_i\rangle  =& \epsilon'-\lambda_i,\\
        \int_{x\in\Path_{ij,\In}} d\langle x,n_i\rangle =& \epsilon'+c_{ij}-\lambda,\\
        \int_{x\in\Path_{v,\Out}}e^{-t^{\beta_i(x)}(1+O(t^{\epsilon'}))} d\log t^{\beta_i(x)} =& -\gamma-\epsilon'\log t + O(t^\epsilon)\text{ for } v=i \text{ or } ij.
    \end{align*}
\end{lemma}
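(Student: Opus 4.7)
\medskip

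\noindent\textbf{Plan.} The first two identities are immediate evaluations of the primitive $\langle x,n_i\rangle$ at the endpoints of the paths, since $d\langle x,n_i\rangle$ is exact. For $\Path_{i,\mathrm{In}}$ the endpoints are $\Path_i(0)=0$ and $\Path_i(1)\in F_i$, where $F_i$ is cut out by $\beta_i(x)=\lambda_i+\langle x,n_i\rangle=\epsilon'$, giving
\[
\int_{\Path_{i,\mathrm{In}}} d\langle x,n_i\rangle \;=\; \langle \Path_i(1),n_i\rangle-\langle 0,n_i\rangle \;=\; \epsilon'-\lambda_i.
\]
For $\Path_{ij,\mathrm{In}}$ the starting point $x_{ij}$ lies on $\fb_{ij}=\{\langle x,n_i\rangle+c_{ij}=0\}$, so $\langle x_{ij},n_i\rangle=-c_{ij}$, while the endpoint on $F_i$ gives $\langle \cdot,n_i\rangle=\epsilon'-\lambda_i$; subtracting yields $\epsilon'+c_{ij}-\lambda_i$ (reading the $\lambda$ in the lemma as $\lambda_i$).

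\medskip

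\noindent For the outgoing integral, I will change variables to $u=t^{\beta_i(x)}$, which turns the one-form into the standard shape of an exponential integral. Along $\Path_{v,\mathrm{Out}}$, the function $\beta_i$ is monotone from $\epsilon'$ at $s=1$ to $-\infty$ as $s\to\infty$ (by the definition of $R_i$ and the defining conditions of $\Path_i$ and $\Path_{ij}$), so $u$ sweeps $[t^{\epsilon'},+\infty)$ bijectively. Since $d\log t^{\beta_i(x)}=(\log t)\,d\beta_i=du/u$, the integral becomes
\[
\int_{t^{\epsilon'}}^{\infty} e^{-u\,(1+O(t^{\epsilon'}))}\,\frac{du}{u}.
\]

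\medskip

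\noindent I will first handle the clean case without the $O(t^{\epsilon'})$ factor: this is the exponential integral
\[
E_1(t^{\epsilon'}) \;=\; \int_{t^{\epsilon'}}^{\infty} e^{-u}\,\frac{du}{u},
\]
whose standard asymptotic expansion at the origin gives
\[
E_1(t^{\epsilon'}) \;=\; -\gamma-\log(t^{\epsilon'})+O(t^{\epsilon'}) \;=\; -\gamma-\epsilon'\log t+O(t^{\epsilon'}),
\]
which already supplies the leading part of the claim (and $O(t^{\epsilon'})\subset O(t^\epsilon)$ since $\epsilon'>\epsilon$).

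\medskip

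\noindent The remaining task, which I expect to be the main technical point, is to show that the perturbation $e^{-u\,O(t^{\epsilon'})}-1$ contributes only $O(t^\epsilon)$ after integration against $e^{-u}\,du/u$. I will split the range at $u=1$: on $[t^{\epsilon'},1]$ one has $u\cdot O(t^{\epsilon'})=O(t^{\epsilon'})$ uniformly, so the integrand is $O(t^{\epsilon'})\cdot du/u$ which integrates to $O(t^{\epsilon'}\,|\log t|)=O(t^{\epsilon})$ (after possibly shrinking $\epsilon'-\epsilon$); on $[1,\infty)$ the factor $e^{-u}$ forces exponential decay and the bound is immediate. Combining both pieces yields the stated asymptotic, completing the proof.
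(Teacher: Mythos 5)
Your proposal is correct and follows essentially the same route as the paper: the first two identities are evaluated by exactness of $d\langle x,n_i\rangle$ at the endpoints (and you rightly read the $\lambda$ in the second line as $\lambda_i$), and the outgoing integral is handled by the substitution $u=t^{\beta_i(x)}$ reducing to the exponential integral $E_1(t^{\epsilon'})=-\gamma-\epsilon'\log t+O(t^{\epsilon'})$. The only cosmetic difference is in absorbing the $O(t^{\epsilon'})$ perturbation in the exponent: the paper expands $e^{-\delta\,O(t^{\epsilon'})}$ as a power series and integrates term by term to get $\sum_{l\ge1}\tfrac{1}{l}(-O(t^{\epsilon'}))^l=O(t^{\epsilon'})$, whereas you split the range at $u=1$; both yield the same bound.
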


\begin{proof}
    Since $\Path_i(0)=0$ and $\Path_i(1)\in F_i$, we have $\langle \Path_i(0),n_i\rangle=0$ and $\langle \Path_i(1),n_i\rangle=\epsilon'-\lambda_i$. So 
    \begin{align*}
        \int_{x\in\Path_{i,\In}}d\langle x,n_i\rangle = \langle \Path_i(1)-\Path_i(0),n_i\rangle = \epsilon'-\lambda_i.
    \end{align*}
    Similarly, we have that 
    \begin{align*}
        \int_{x\in\Path_{ij,\In}} d\langle x,n_i\rangle =& \epsilon'+c_{ij}-\lambda.
    \end{align*}
    Now replacing $t^{\beta_i(x)}$ with $\delta$, we have
    \begin{align*}
    & \int_{x\in\Path_{v,\Out}}e^{-t^{\beta_i(x)}(1+O(t^{\epsilon'}))} d\log t^{\beta_i(x)}\\
    =& \int_{t^{\epsilon'}}^{\infty}e^{-\delta(1+O(t^{\epsilon'}))} d\log \delta\\
    =& \int_{t^{\epsilon'}}^{\infty} e^{-\delta}+\sum_{l=1}^{\infty}\frac{1}{l!}\left(e^{-\delta}(-\delta)^lO(t^{\epsilon'})^l\right)d\log \delta \\
    =& -\gamma-\epsilon'\log t+\sum_{l=1}^{\infty}\frac{1}{l}\left(-O(t^{\epsilon'})\right)^l\\
    =& -\gamma-\epsilon'\log t+O(t^\epsilon).
    \end{align*}
\end{proof}
So using Lemma \ref{lemma:Gamma_O_L_combinatoric_information}, we have that
\begin{align}
    \int_{\Gamma_i}e^{-W_t}\Omega =& (2\pi\sqrt{-1})\kappa_i(-\lambda_i\log t-\gamma) + O(t^\epsilon),\label{eq:integral_gamma_i}\\
    \int_{\Gamma_{ij}}e^{-W_t}\Omega =& (2\pi\sqrt{-1})b_{ij}\left((c_{ij}-\lambda_i)\log t-\gamma\right) + O(t^\epsilon).\label{eq:integral_gamma_ij}
\end{align}

For $z\in \Gamma_{\Center}$, since $\Log_t(z) = 0$ and $0\in \Polytope{\epsilon}$, $\|W_t(z)\| = O(t^\epsilon)$. So we have
\begin{equation}\label{eq:integral_gamma_center}
    \int_{\Gamma_{\Center}}e^{-W_t}\Omega = (2\pi\sqrt{-1})^2\int_{\sigma_L}(1+O(t^\epsilon))dy_1\wedge dy_2
    = (2\pi\sqrt{-1})^2\vol(\sigma_L).
\end{equation}

\subsection{Oscillatory integral on \texorpdfstring{$\Gamma_{\sing,ij,k}$}{}}

We firstly show that only the monomials $z^{ln_i},l\in\Z$ in the expansion $e^{-W_t} = \sum_{q=0}^{\infty}\frac{1}{q!}(-W_t)^q$ contribute to the oscillatory integral.

\begin{lemma}\label{lemma:Gamma_sing_ij_monomial_integral}
For $n\in N$ such that $n\neq ln_i$ for any $l\in\Z$,
    \begin{align*}
        \int_{\Gamma_{\sing,ij,k}}z^{n}\Omega = 0.
    \end{align*}
\end{lemma}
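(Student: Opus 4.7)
The plan is to parametrize $\Gamma_{\sing,ij,k}$ explicitly by $(s,\tau)\in[0,1]^2$ using the construction in Section \ref{sec:mirror_cycle_exceptional} and exploit the fact that $\tau$ controls only the phase of the coordinate $u$. Since $n_i\wedge\gamma_i = 1$, the pair $(n_i,\gamma_i)$ is a $\mathbb{Z}$-basis of $N$, so any $n\in N$ admits a unique decomposition $n = a\,n_i + b\,\gamma_i$ with $a,b\in\mathbb{Z}$. The hypothesis $n\neq l\,n_i$ for any $l\in\mathbb{Z}$ is equivalent to $b\neq 0$, and this will be the only property of $n$ that matters.

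Under the chart embedding $U_{\sigma_0}\hookrightarrow U_{\fd_{ij}}$ given by $w = z^{n_i}$ and $u = z^{\gamma_i}$, we have $z^n = w^a u^b$. Inserting the explicit parametrization of $\Gamma_{\sing,ij,k}$ yields
\[
z^n\big|_{\Gamma_{\sing,ij,k}} = p_k(s)^a\cdot\|H_{ij}(p_k(s))\|^{sb/2}\cdot e^{2\pi\sqrt{-1}\,b\tau}.
\]
Next I would pull back $\Omega|_{U_{\sigma_0}} = d\log w\wedge d\log u$. Since $w = p_k(s)$ depends only on $s$, we have $d\log w = \tfrac{p_k'(s)}{p_k(s)}\,ds$. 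On the other hand $\log u = \tfrac{s}{2}\log\|H_{ij}(p_k(s))\| + 2\pi\sqrt{-1}\,\tau$, so $d\log u$ is the sum of a $ds$-multiple coming from $d\log\|u\|$ and the phase term $2\pi\sqrt{-1}\,d\tau$. The $ds\wedge ds$ contribution vanishes, leaving
\[
\Omega\big|_{\Gamma_{\sing,ij,k}} = 2\pi\sqrt{-1}\,\frac{p_k'(s)}{p_k(s)}\,ds\wedge d\tau.
\]

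Combining these, the integrand factorizes as a function of $s$ alone times $e^{2\pi\sqrt{-1}\,b\tau}$. By Fubini I would perform the $\tau$-integration first, which gives
\[
\int_0^1 e^{2\pi\sqrt{-1}\,b\tau}\,d\tau = 0 \quad \text{whenever } b\neq 0,
\]
so the full integral vanishes. I do not foresee any genuine obstacle: the claim is essentially a residue-style annulus computation, transparent once $\tau$ is recognized as an angular coordinate on the $u$-circle. The only mild subtlety is the single degenerate boundary point at $s=1$ where $u\to 0$ (and $v$ compensates), but this is a measure-zero locus and is covered by the continuity convention already built into the construction of $\Gamma_{\sing,ij,k}$; for $b<0$ one may instead integrate $\tau$ first on $[0,1]$ for each $s<1$ and take the limit, the $s$-integral remaining immaterial because the $\tau$-integral is identically zero.
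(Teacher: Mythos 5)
Your proposal is correct and follows essentially the same route as the paper: decompose $n=an_i+bn'$ in a basis adapted to $n_i$, pull back $z^n\,\Omega$ to the $(s,\tau)$-parametrization of $\Gamma_{\sing,ij,k}$, observe that the $\tau$-dependence is purely the phase factor $e^{2\pi\sqrt{-1}b\tau}$, and integrate over $\tau$ first. Your treatment is in fact slightly more explicit than the paper's (you spell out the pullback of $\Omega$ and the boundary point at $s=1$), but there is no substantive difference.
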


\begin{proof}
    Write $n$ as $n=an_i+bn'$ with $n'\in N$ such that $n_i\wedge n'=1$. Then we have
    \begin{align*}
        & \int_{\Gamma_{\sing,ij,k}}z^{n}\Omega\\
        =& \int_{\Gamma_{\sing,ij,k}}z^{n} d\log(z^{n_i})\wedge d\log(z^{n'})\\
        =& \int_{s\in[0,1]}\int_{\tau\in[0,1]}|t^{\langle \Path_{\sing,ij,k}(s),n \rangle}|e^{2\pi\sqrt{-1}b\tau} d\log(p_{k}(s))\wedge(2\pi\sqrt{-1}) d\tau.
    \end{align*}
    Since $\int_{\tau\in[0,1]}e^{2\pi\sqrt{-1}q\tau}d\tau=0$ whenever $q\in\Z_{\neq 0}$, $\int_{\Gamma_{\sing,ij,k}}z^{n}\Omega = 0$ when $b\neq 0$.
\end{proof}

We then show that the monomials $z^{ln_i}$ in $e^{-W_t}-1$ contribute small to the oscillatory integral.

\begin{lemma}\label{lemma:Gamma_sing_ij_monomial_small}
    Let $A$ be a real number such that
    \begin{equation*}
        0<A<\min_{\{q'\in\Z_{>0},l'\in\Z_{\neq 0}\}}\{|\frac{q'}{l'}| \mid t^{a'}z^{l'n_i} \text{ appears in }(W_t)^{q'}\}.
    \end{equation*}
    Then for any $q\in\Z_{>0}$, if $t^{a}z^{ln_i}$ is a monomial in $(W_t)^q$, then
    \begin{equation*}
        \|t^{a}z^{ln_i}\| = O(t^{q\epsilon})
    \end{equation*}
    for $z\in\Gamma_{\sing,ij,k}$.
\end{lemma}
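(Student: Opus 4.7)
The plan is to separate $\|t^a z^{ln_i}\|$ on $\Gamma_{\sing,ij,k}$ into a \emph{geometric} factor coming from evaluating $z^{ln_i}$ along the cycle and a \emph{combinatorial} factor coming from the $t$-exponent $a$, and to show that the three constants $\epsilon$, $\epsilon'$, $A$ introduced in Sections \ref{sec:structuresheaf_decomposition} and \ref{sec:mirror_cycle_exceptional} are calibrated so that the two factors combine to give $O(t^{q\epsilon})$.

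For the geometric factor, since $z^{n_i}$ restricts to $w=p_k(s)$ on the chart $U_{\fd_{ij}}$, the band condition $t^{-A(\epsilon-\epsilon')} < \|p_k(s)\|t^{c_{ij}} < t^{A(\epsilon-\epsilon')}$ from the construction of $p_k$, after taking $\log_t$ and scaling by $l$, gives
\[
\log_t\|z^{ln_i}\| \;=\; l\log_t\|p_k(s)\| \;\geq\; -lc_{ij}-|l|A(\epsilon'-\epsilon)
\]
on $\Gamma_{\sing,ij,k}$, and this bound is independent of the $\gamma_i$-direction that is invisible to $z^{ln_i}$.

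For the combinatorial factor, I would write $t^a z^{ln_i}$ as a product of $q$ monomials $t^{a_r}z^{n_r}$ drawn from $W_t$, so that $a=\sum_r a_r$ and $\sum_r n_r = ln_i$. Each affine-linear function $\beta_r(x) := a_r + \langle x, n_r\rangle$ satisfies $\beta_r(x) \geq \epsilon'$ throughout $\Polytope{\epsilon'}$: for main monomials of $W_t^*$ this is the definition of the polytope, and for extra monomials $t^{\alpha(x)}$ of $W_t-W_t^*$ the choice of $\epsilon'$ in Section \ref{sec:structuresheaf_decomposition} forces $\alpha(x)>\epsilon'+\beta_v(x)\geq 2\epsilon'$ for some $v\in V$. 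Choosing $x^*=v_{ij}$, the vertex of $\Polytope{\epsilon'}$ lying on $\fb_{ij}=\{\langle x,n_i\rangle=-c_{ij}\}$, and summing over $r$ (so that the $\gamma_i$-contributions cancel because $\sum_r n_r=ln_i$), one obtains
\[
a - lc_{ij} \;=\; a + l\langle x^*, n_i\rangle \;=\; \sum_{r=1}^q \beta_r(x^*) \;\geq\; q\epsilon'.
\]

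Combining the two bounds yields $\log_t\|t^a z^{ln_i}\| \geq q\epsilon' - |l|A(\epsilon'-\epsilon)$, and the defining inequality $A<q/|l|$ for the pair $(q,l)$ appearing here gives $|l|A(\epsilon'-\epsilon) \leq q(\epsilon'-\epsilon)$, so the right-hand side is at least $q\epsilon$, proving the claim. The main subtlety, and the reason the three constants $\epsilon$, $\epsilon'$, $A$ are introduced with their specific relationships, is that the geometric wiggle $|l|A(\epsilon'-\epsilon)$ allowed for $p_k$ is absorbed exactly by the combinatorial margin $q(\epsilon'-\epsilon)$ gained by working with the larger polytope $\Polytope{\epsilon'}$ rather than $\Polytope{\epsilon}$; beyond checking this cancellation, and the simple but essential observation that $n_r$'s summing to $ln_i$ have vanishing $\gamma_i$-component so a single evaluation point $x^*$ on $\fb_{ij}$ controls all $q$ factors simultaneously, the argument is just affine-linear bookkeeping on the polytope.
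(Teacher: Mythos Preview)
Your proof is correct and follows essentially the same route as the paper's: both write $t^a z^{ln_i}=\prod_{r=1}^q h_r$ with $h_r$ monomials of $W_t$, use the band condition on $\|p_k(s)\|t^{c_{ij}}$ together with $A|l|<q$ to control the geometric factor, and bound the combinatorial factor by evaluating $\prod_r h_r$ at a point of $\fb_{ij}\cap\Polytope{\epsilon'}$ (the paper uses $x_{ij}=\Path_{ij}(0)$, you use the vertex $v_{ij}$; any such point works). Your write-up is in fact slightly more careful than the paper's in that you spell out why the extra monomials of $W_t-W_t^*$ also satisfy $\beta_r(x^*)\geq\epsilon'$.
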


\begin{proof}
    Suppose $t^{a}z^{ln_i} = \prod_{r=1}^q h_{r}(z)$ with $h_r$ a monomial in $W_t$. Then for any $z\in\Gamma_{\sing_{ij,k}}$, there exists some $s\in[0,1]$ such that $\|t^{a}z^{ln_i}\| = t^a\|p_k(s)\|^l$. Now due to the choice of $p_k(s)$ in Section \ref{sec:mirror_cycle_exceptional}, we have that
    \begin{align*}
        t^a\|p_k(s)\|^l =&(\|p_k(s)\|t^{c_{ij}})^lt^at^{-lc_{ij}}\\
        <& t^{A|l|(\epsilon-\epsilon')}t^at^{-lc_{ij}}\\
        <& t^{q(\epsilon-\epsilon')}t^at^{-lc_{ij}}.
    \end{align*}
    Note that $t^at^{-lc_{ij}} = t^a t^{l\langle x_{ij},n_i \rangle} = \prod_{r=1}^q h_{r}(t^{x_{ij}})$. Since $x_{ij}\in \Polytope{\epsilon'}$, $h_r(t^{x_{ij}})<t^{\epsilon'}$, so $t^a\|p_k(s)\|^l<t^{q\epsilon}$.
\end{proof}

Using Lemma \ref{lemma:Gamma_sing_ij_monomial_integral} and Lemma \ref{lemma:Gamma_sing_ij_monomial_small}, we have that
\begin{equation}\label{eq:integral_gamma_sing}
    \begin{split}
        \int_{\Gamma_{\sing,ij,k}}e^{-W_t}\Omega =& \int_{\Gamma_{\sing,ij,k}}(1+O(t^\epsilon))\Omega\\
        =& (1+O(t^\epsilon))\int_{s\in[0,1]}\int_{\tau\in[0,1]} d\log(p_{k}(s))\wedge(2\pi\sqrt{-1})d\tau\\
        =& (1+O(t^\epsilon))(2\pi\sqrt{-1})\int_{s\in[0,1]}d\log (p_{k}(s))\\
        =& (2\pi\sqrt{-1})\left((1-2k)\pi\sqrt{-1}\right)+O(t^\epsilon).
    \end{split}
\end{equation}

\subsection{Conclusion}
We need to compare $Z_{\topo}(\cO(L))$ with $Z_B(\Gamma_{\cO(L)})$. Since we have done it for the structure sheaf, it is sufficient to compare $Z_{\topo}(\cO(L)) - Z_{\topo}(\cO)$ with $Z_B(\Gamma_{\cO(L)})-Z_B(\Gamma_{\cO})$.
Say $L=L_{\toric}+L_{\Except}$ with $L$ toric and $L_{\Except}=\sum_{i=1}^n\sum_{j=1}^{l_i}b_{ij}E_{ij}$, then using (\ref{eq:Gamma_class}), we have that
\begin{align*}
    & Z_{\topo}(\cO(L)) - Z_{\topo}(\cO)\\
    =& (2\pi\sqrt{-1})\big(-(\omega\cdot L)\log t-\gamma c_1(Y)\cdot L\big) + \frac{1}{2}(2\pi\sqrt{-1})^2 L^2\\
    =& (2\pi\sqrt{-1})\sum\limits_{i=1}^n \big(-(\lambda_iD_i\cdot L_{\toric})\log t-\gamma D_i\cdot L_{\toric}\big) + \frac{1}{2}(2\pi\sqrt{-1})^2 L_{\toric}^2\\
    &+(2\pi\sqrt{-1})\sum\limits_{i=1}^n\sum_{j=1}^{l_i} b_{ij}\big(-(\omega\cdot E_{ij})\log t-\gamma c_1(Y)\cdot E_{ij}\big) - \frac{1}{2}(2\pi\sqrt{-1})^2 \sum\limits_{i=1}^n\sum_{j=1}^{l_i} b_{ij}^2.
\end{align*}
Since $D_i\cdot L_{\toric} = \kappa_i$, $\omega\cdot E_{ij} = \lambda_i-c_{ij}$, $c_1(Y)\cdot E_{ij} = 1$ and $\frac{1}{2}L_{\toric}^2 = \vol(\sigma_{L_{\toric}})$, we have that 
\begin{equation}
    Z_B(\Gamma_{\cO(L)})-Z_B(\Gamma_{\cO}) = Z_{\topo}(\cO(L)) - Z_{\topo}(\cO) +O(t^\epsilon)
\end{equation}
by using (\ref{eq:integral_gamma_i}), (\ref{eq:integral_gamma_ij}), (\ref{eq:integral_gamma_center}) and (\ref{eq:integral_gamma_sing}), which proves Conjecture \ref{conj:main} for $\cO(L)$.

\begin{theorem}
\label{thm:main}
There is an $\epsilon>0$, such that
    $$Z_B(E)=Z_\top(\alpha(E))+O(t^\epsilon),$$
    where $\alpha$ is given by \eqref{eq:mirror_cycle_of_a_general_divisor}.
\end{theorem}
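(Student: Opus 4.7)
The plan is to assemble Theorem \ref{thm:main} from the two computations already carried out in Sections \ref{sec:structuresheaf} and \ref{sec:anybundle}. First, I would reduce to line bundles: the map $\alpha$ is defined on line bundles by \eqref{eq:mirror_cycle_of_a_general_divisor} and extended linearly to $K(Y)$, while $Z_B$ (as an oscillatory integral) and $Z_\top$ (as an integral against $\ch$) are both additive, so it suffices to prove the statement for $E = \cO(L)$; since $Y$ is a rational surface, $K(Y)$ is generated as an abelian group by line bundle classes.

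For $E = \cO(L)$, I would split $L = L_{\toric} + L_{\Except}$ with $L_{\Except} = \sum_{i,j} b_{ij}E_{ij}$ and use \eqref{eq:mirror_cycle_of_a_general_divisor} together with the expansion $\Psi(\cO(L)) = \hat\Gamma_Y \cup \exp(2\pi\sqrt{-1}\, c_1(L))$ to reduce the claim to the base case $Z_B(\Gamma_{\cO}) = Z_\top(\cO) + O(t^\epsilon)$ of Section \ref{sec:structuresheaf} (assembled from \eqref{eq:Z_topo(O)_on_P(epsilon)}, \eqref{eq:Z_topo(O)_on_P(epsilon)_complement}, and Lemma \ref{lemma:combinatorial_information_of_polytope}) together with the correction $Z_B(\Gamma_{\cO(L)}) - Z_B(\Gamma_{\cO}) = Z_\top(\cO(L)) - Z_\top(\cO) + O(t^\epsilon)$. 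For the toric piece I would invoke \eqref{eq:integral_gamma_i} and \eqref{eq:integral_gamma_center} and match them against the identities $\sum_i \kappa_i \lambda_i = \omega\cdot L_{\toric}$, $\sum_i\kappa_i = c_1(Y)\cdot L_{\toric}$, and $2\vol(\sigma_{L_{\toric}}) = L_{\toric}^2$. For each exceptional term $b_{ij}E_{ij}$ I would invoke \eqref{eq:integral_gamma_ij} and \eqref{eq:integral_gamma_sing}, matching them against $\omega\cdot E_{ij} = \lambda_i - c_{ij}$, $c_1(Y)\cdot E_{ij}=1$, $E_{ij}^2 = -1$, and the elementary identity $\sum_{k=1}^{|b_{ij}|}(1-2k) = -|b_{ij}|^2$.

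The substantive work has already been done in the preceding sections; the main obstacle is encapsulated in Lemma \ref{lemma:Gamma_O_L_W_approximation} and Lemma \ref{lemma:Gamma_sing_ij_monomial_small}, which guarantee that along every piece of the mirror cycle, $W_t$ is either bounded by $O(t^{\epsilon'})$ on the \emph{inside} pieces $\Path_{v,\In}$, or well-approximated by its dominant monomial $t^{\lambda_i} z^{n_i}$ to that precision on the \emph{outside} rays $\Path_{v,\Out}$ and near the singular fibres $\Gamma_{\sing,ij,k}$, so that the error contributions from all pieces collapse into a single $O(t^\epsilon)$. With those estimates in hand, Theorem \ref{thm:main} reduces to a bookkeeping exercise matching the tropical combinatorics of $\Polytope{\delta}$ against the intersection numbers $\omega\cdot L$, $c_1(Y)\cdot L$, and $L^2$ that appear in the Gamma-modified Chern character $\Psi(\cO(L))$.
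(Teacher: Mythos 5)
Your proposal is correct and follows the paper's own argument essentially verbatim: the theorem is assembled from the structure-sheaf computation of Section \ref{sec:structuresheaf} together with the piecewise integrals \eqref{eq:integral_gamma_i}, \eqref{eq:integral_gamma_ij}, \eqref{eq:integral_gamma_center}, \eqref{eq:integral_gamma_sing} of Section \ref{sec:anybundle}, matched against the same intersection-number identities ($D_i\cdot L_{\toric}=\kappa_i$, $\omega\cdot E_{ij}=\lambda_i-c_{ij}$, $c_1(Y)\cdot E_{ij}=1$, $\tfrac12 L_{\toric}^2=\vol(\sigma_{L_{\toric}})$). Your explicit remark that $K(Y)$ is generated by line bundles and that both central charges are additive is a small but welcome addition the paper leaves implicit.
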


\printbibliography

\end{document}